\documentclass[10pt]{amsart}
\usepackage{amsmath}
\usepackage{amsxtra}
\usepackage{amscd}
\usepackage{amsthm}
\usepackage{amsfonts}
\usepackage{amssymb}
\usepackage{eucal}
\usepackage[all]{xy}
\usepackage{graphicx}
\usepackage[usenames]{color}
\usepackage{tikz-cd}
\usepackage{mathtools}
\usepackage{hyperref}
\usetikzlibrary{spath3}
\tikzset{between/.style n args={2}{/tikz/spath/at end path construction={
    \tikzset{spath/split at keep middle={current}{#1}{#2}}
}}}

\newtheorem{lem}[subsubsection]{Lemma}
\newtheorem{prop}[subsubsection]{Proposition}

\newtheorem{thm}[subsubsection]{Theorem}

\newtheorem{defn}[subsubsection]{Definition}

\theoremstyle{remark}
\newtheorem{rem}[subsubsection]{Remark}
\newtheorem{example}[subsubsection]{Example}

\newcommand{\on}[1]{\operatorname{#1}}

\newcommand{\Bun}{\on{Bun}}
\newcommand{\Dmod}{\on{D-mod}}
\newcommand{\Dmodh}{\on{D-mod}_{\frac{1}{2}}}
\newcommand{\QCoh}{\on{QCoh}}
\newcommand{\IndCoh}{\on{IndCoh}}

\newcommand{\gmod}{\on{\hat{\mathfrak{g}}-mod}}
\newcommand{\KL}{\on{KL}(G)_{\on{crit}}}
\newcommand{\IKL}{\on{IKL}(G)_{\on{crit}}}
\newcommand{\IKLx}{\on{IKL}(G)_{\on{crit}, x_0}}

\newcommand{\Gcheck}{\check{G}}
\newcommand{\gcheck}{\check{\mathfrak{g}}}
\newcommand{\ncheck}{\check{\mathfrak{n}}}
\newcommand{\Bcheck}{\check{B}}
\newcommand{\Tcheck}{\check{T}}

\newcommand{\Ran}{\on{Ran}}

\newcommand{\FactAlgCat}{\on{FactAlgCat}}
\newcommand{\FactModCat}{\on{FactModCat_{x_0}}}

\newcommand{\sssec}[1]{\subsubsection{}}

\title{Iwahori Fundamental Local Equivalence}
\author{Taeuk Nam}
\date{\today}

\begin{document}

\begin{abstract}
    We construct three tamely ramified local equivalences of factorization module categories. The first is a factorization version of the Arkhipov-Bezrukavnikov equivalence at a point. The second is a factorization version of the Bezrukavnikov equivalence at a point. The third is an Iwahori-ramified version of the factorizable Fundamental Local Equivalence.
\end{abstract}

\maketitle

\tableofcontents

%%%%%%%%%%%%%%%%%%%%%%%%%%%%%%%%%%%%%%%%%%%%%%%%%%%%%%%%%%%%%%%%%%%%%%%%%%

\section*{Introduction}
\subsection{Motivation}
\sssec{}
The (de Rham, global, critical level) unramified geometric Langlands conjecture was recently proven in a series of five papers \cite{GLC1}, \cite{GLC2}, \cite{GLC3}, \cite{GLC4}, and \cite{GLC5} by nine authors: Arinkin, Beraldo, Campbell, Chen, Faergeman, Gaitsgory, Lin, Raskin, and Rozenblyum. The statement is that there is an equivalence of DG categories
$$
\Dmod(\Bun_G) \overset{\simeq}{\longrightarrow} \IndCoh_{\on{Nilp}}(\on{LocSys}_{\Gcheck})
$$
satisfying some compatibilities.

\sssec{}
In the first paper, \cite{GLC1}, the authors construct the Langlands functor
$$
\mathbb{L}_G : \Dmod(\Bun_G) \longrightarrow \IndCoh_{\on{Nilp}}(\on{LocSys}_{\Gcheck}).
$$
The main ingredients of the construction are the \emph{spectral action}
$$
\QCoh(\on{LocSys}_{\Gcheck}) \otimes \Dmod(\Bun_G) \to \Dmod(\Bun_G)
$$
and the fact that 
$$
\Dmod(\Bun_G)
$$
is a compactly generated DG category.

\sssec{}
In the second paper, \cite{GLC2}, the Langlands functor is studied \emph{locally}. More precisely, we mean the following.

\sssec{}
At a point $x \in X$, we can consider three equivalences of a local nature. These are the geometric Casselman-Shalika equivalence
$$
\on{CS}_{G, x} : \on{Whit}^!(G)_{x} \overset{\simeq}{\longrightarrow} \on{Rep}(\Gcheck)_x,
$$
the geometric Satake equivalence
$$
\on{Sat}_{G, x} : \on{Sph}_{G, x} \overset{\simeq}{\longrightarrow} \on{Sph}_{\Gcheck, x}^{\on{spec}},
$$
and the Fundamental Local Equivalence (at critical level)
$$
\on{FLE}_{G, x} : \on{KL}(G)_{\on{crit}, x} \overset{\simeq}{\longrightarrow} \IndCoh(\on{Op}_{\Gcheck, x}^{\on{mon-free}}).
$$

Furthermore, the geometric Satake equivalence is an equivalence of \emph{monoidal categories}. There is an action of $\on{Sph}_{G, x}$ on $\on{Whit}^!(G)_{x}$ (resp. $\on{KL}(G)_{\on{crit}, x}$) and an action of $\on{Sph}_{\Gcheck, x}^{\on{spec}}$ on $\on{Rep}(\Gcheck)_x$ (resp. $\IndCoh(\on{Op}_{\Gcheck, x}^{\on{mon-free}})$). $\on{CS}_{G, x}$ (resp. $\on{FLE}_{G, x}$) is compatible with these actions under $\on{Sat}_{G, x}$.

\sssec{}
These equivalences are shown to be compatible with the Langlands functor in loc. cit. in the following sense: there are commutative diagrams
$$
\begin{tikzcd}
	{\on{Whit}^!(G)_{x}} && {\on{Rep}(\Gcheck)_{x}} \\
	\\
	{\Dmod(\Bun_G)} && {\IndCoh_{\on{Nilp}}(\on{LocSys}_{\Gcheck})} \\
	\\
	{\on{KL}(G)_{\on{crit}, x}} && {\IndCoh(\on{Op}_{\Gcheck, x}^{\on{mon-free}}).}
	\arrow["{\on{CS}_{G, x}}", from=1-1, to=1-3]
	\arrow[from=3-1, to=1-1]
	\arrow["{\mathbb{L}_G}", from=3-1, to=3-3]
	\arrow[from=3-3, to=1-3]
	\arrow[from=5-1, to=3-1]
	\arrow["{\on{FLE}_{G, x}}", from=5-1, to=5-3]
	\arrow[from=5-3, to=3-3]
\end{tikzcd}
$$

Furthermore, there is an action of $\on{Sph}_{G, x}$ on $\Dmod(\Bun_G)$ and an action of $\on{Sph}_{\Gcheck, x}^{\on{spec}}$ on $\IndCoh_{\on{Nilp}}(\on{LocSys}_{\Gcheck})$. $\mathbb{L}_G$ is compatible with these actions under $\on{Sat}_{G,x}$.

\sssec{}
However, this is insufficient to control the behaviour of the global Langlands functor. Instead of choosing a point $x \in X$, we must consider \emph{all} points of $X$ simultaneously. Not only that, we also want to allow these points to "move around in $X$", or in other words, we want to allow $x$ to vary in \emph{families}. In addition, we want to consider what happens when the points "collide with each other" while moving around in $X$, or in other words, we want to also allow families of \emph{tuples} of points of $X$.

\sssec{}
It turns out that the correct technical device to employ in order to encode the above information is \emph{Factorization}. The above equivalences are restrictions to a point $x \in \Ran$ of equivalences of factorization categories
$$
\on{CS}_{G} : \on{Whit}^!(G) \overset{\simeq}{\longrightarrow} \on{Rep}(\Gcheck),
$$
$$
\on{Sat}_{G} : \on{Sph}_{G} \overset{\simeq}{\longrightarrow} \on{Sph}_{\Gcheck}^{\on{spec}},
$$
and
$$
\on{FLE}_{G} : \on{KL}(G)_{\on{crit}} \overset{\simeq}{\longrightarrow} \IndCoh(\on{Op}_{\Gcheck}^{\on{mon-free}}).
$$

\sssec{}
If we consider tame (Iwahori) ramification at a fixed point $x_0 \in X$ in the context of global geometric Langlands, we are naturally led to ask: can we find a spectral description of the DG category of D-modules on $\Bun_G$ with Iwahori level structure? In other words, what should be on the right hand side of 
$$
\Dmod(\Bun_G^{\on{I}}) \simeq \ ?
$$

\sssec{}
We have strong heuristic reasons to expect that the answer should be Ind-coherent sheaves (with some singular support condition, that we will continue to call $\on{Nilp}$) on the stack
$$
\on{LocSys}_{\Gcheck}(X \setminus x_0) \underset{\on{LS}_{\Gcheck}(D_{x_0}^{\times})}{\times} \on{LS}_{\Bcheck}(D_{x_0}^{\times}) \underset{\on{LS}_{\Tcheck}(D_{x_0}^{\times})}{\times} \on{LS}_{\Tcheck}(D_{x_0}).
$$

\sssec{}
Using Faergeman's construction of a tamely ramified spectral action in \cite{Fae} and our proof of the compact generation of $\Dmod(\Bun_G^{\on{I}})$ in \cite{Nam}, we are able to emulate \cite{GLC1} to construct a functor
$$
\mathbb{L}_G^{\on{I}} : \Dmod(\Bun_G^{\on{I}}) \overset{\simeq}{\longrightarrow} \IndCoh_{\on{Nilp}}(\on{LocSys}_{\Gcheck}(X \setminus x_0) \underset{\on{LS}_{\Gcheck}(D_{x_0}^{\times})}{\times} \on{LS}_{\Bcheck}(D_{x_0}^{\times}) \underset{\on{LS}_{\Tcheck}(D_{x_0}^{\times})}{\times} \on{LS}_{\Tcheck}(D_{x_0})).
$$

\sssec{}
Following the strategy of the five papers, the next logical step is to study $\mathbb{L}_G^{\on{I}}$ locally. Since away from $x_0$ the functor is unramified, we should more specifically study it locally at $x_0$. The three pointwise equivalences in the unramified setting have natural Iwahori-ramified counterparts:
these are the Arkhipov-Bezrukavnikov equivalence
$$
\on{AB}_{x_0} : \on{Whit}^!(\on{Fl}_G)_{x_0} \overset{\simeq}{\longrightarrow} \QCoh(\ncheck/\Bcheck)_{x_0},
$$
Bezrukavnikov's equivalence
$$
\on{B}_{x_0} : \on{Aff}_{G, x_0} \overset{\simeq}{\longrightarrow} \on{Aff}_{\Gcheck, x_0}^{\on{spec}},
$$
and the Iwahori Fundamental Local Equivalence
$$
\on{IFLE}_{x_0} : \IKLx \overset{\simeq}{\longrightarrow} \IndCoh(\on{Op}_{\Gcheck, x_0}^{\on{mer}} \underset{\on{LS}_{\Gcheck}(D_{x_0}^{\times})}{\times} \on{LS}_{\Bcheck}(D_{x_0}^{\times}) \underset{\on{LS}_{\Tcheck}(D_{x_0}^{\times})}{\times} \on{LS}_{\Tcheck}(D_{x_0}).
$$

\sssec{}
We expect these pointwise equivalences to interact with $\mathbb{L}_G^{\on{I}}$ in an entirely analogous way to the unramified setting, but we again do not expect this to be sufficient. We want to allow $x \in X$ to vary in families, but because the Iwahori subgroup is not factorizable, we do not want to allow $x_0$ to move around. In fact, we want to allow families of tuples of points in $X$ that contain the constant family with value $x_0$ so we can consider what happens when the "mobile" point $x$ "collides" with the "stationary" point $x_0$.

\sssec{}
The technical device we propose to use in this case is the theory of \emph{Factorization Module Categories}. To be a bit more precise, we will consider \emph{pairs} $(\mathcal{C}, \mathcal{M})$ consisting of $\mathcal{C}$, a factorization category and $\mathcal{M}$, a factorization $\mathcal{C}$-module category at $x_0$.

\sssec{}
In this paper, we will show that each of the six local Iwahori-ramified categories at $x_0$ can be endowed with the structure of factorization module category over the factorization category corresponding to its unramified analogue. Furthermore, we will upgrade the three pointwise Iwahori-ramified equivalences to equivalences of pairs
$$
\on{AB} : (\on{Whit}^!(G), \on{Whit}^!(\on{Fl}_G)) \overset{\simeq}{\longrightarrow} (\on{Rep}(\Gcheck), \QCoh(\ncheck/\Bcheck)),
$$
$$
\on{B} : (\on{Sph}_G, \on{Aff}_{G}) \overset{\simeq}{\longrightarrow} (\on{Sph}_{\Gcheck}^{\on{spec}}, \on{Aff}_{\Gcheck}^{\on{spec}}),
$$
and
$$
\on{IFLE} : (\KL, \IKL) \overset{\simeq}{\longrightarrow} (\IndCoh^*(\on{Op}_{\Gcheck}^{\on{mon-free}}), \IndCoh^*(\on{Op}_{\Gcheck}^{\on{mer}} \underset{\on{LS}_{\Gcheck}^{\on{mer}}}{\times} \ncheck/\Bcheck)).
$$

\subsection{Main Results}
\sssec{}
Here are the main results in this paper.

\sssec{}
In Section \ref{Factorization}, we construct the six factorization module categories that are of interest to us. We will think of them and denote them as \emph{pairs} $(\mathcal{C}, \mathcal{M})$ consisting of a factorization category $\mathcal{C}$ and a factorization $\mathcal{C}$-module category (at $x_0$) $\mathcal{M}$. In all six cases, the factorization categories (i.e. the first components) are defined in \cite{GLC2}.

\begin{prop} \cite{GLC2}
    There is an equivalence, called the geometric Casselman-Shalika equivalence, of factorization categories
    $$
    \on{CS}_G : \on{Whit}^!(G) \longrightarrow \on{Rep}(\Gcheck)
    $$
    in $\FactAlgCat$.
\end{prop}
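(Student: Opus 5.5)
This proposition is quoted from \cite{GLC2}, so the plan is to recall how the factorizable Casselman--Shalika equivalence is built there rather than to prove it from scratch.

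\textbf{Step 1: the two sides as factorization categories.} On the automorphic side, $\on{Whit}^!(G)$ is defined over $\Ran$ as the category of $(\mathfrak{L}N,\chi)$-equivariant objects in $\Dmod(\on{Gr}_{G,\Ran})$ twisted by $\rho(\omega)$. It is a factorization category because both the affine Grassmannian and the group $\mathfrak{L}N$ with its character factorize, and the invariants (computed via $!$-restriction) commute with the factorization isomorphisms. On the spectral side, $\on{Rep}(\Gcheck)$ is the factorization category attached to the symmetric monoidal category $\on{Rep}(\Gcheck)$, equivalently $\QCoh(\on{LS}_{\Gcheck}^{\on{reg}})$ over $\Ran$.

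\textbf{Step 2: the functor.} I would construct the functor in the direction $\on{Rep}(\Gcheck)\to \on{Whit}^!(G)$, as an action on the vacuum Whittaker object. Geometric Satake, in its factorizable form, gives a factorizable monoidal functor $\on{Rep}(\Gcheck)\to \on{Sph}_G$. Acting by convolution on the factorizable vacuum object $\mathbf{1}_{\on{Whit}}$ then gives a factorization functor $\on{Rep}(\Gcheck)\to\on{Whit}^!(G)$. It is strictly factorizable because convolution, Satake and the unit object are all defined uniformly over $\Ran$.

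\textbf{Step 3: equivalence.} Two facts reduce the question to points:
\begin{itemize}
\item being an equivalence of factorization categories can be checked fiberwise over $\Ran$, and by factorization it suffices to check over the fibers at single points of $X$, where all fibers agree;
\item both sides are compactly generated, with compact generators preserved by the functor.
\end{itemize}
The pointwise statement at $x$ is the classical geometric Casselman--Shalika theorem of Frenkel--Gaitsgory--Vilonen. Its proof runs through semi-infinite orbits: the Whittaker category has a t-structure whose heart is semisimple, with simple objects the clean extensions $W^\lambda$ from the $\mathfrak{L}N$-orbits through $t^\lambda$ for $\lambda$ dominant. Convolving $\mathbf{1}_{\on{Whit}}$ with $\on{IC}_\lambda$ gives $W^\lambda$, and the resulting functor is t-exact and fully faithful on compacts.

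\textbf{Main obstacle.} The hardest part is the pointwise statement at the derived level. Specifically, one must show that $\on{Whit}^!(G)_x$ has no higher Ext groups between the $W^\lambda$, so that the derived category matches $\on{Rep}(\Gcheck)$ rather than just its abelian heart. I would attempt this by a cleanness/parity argument on the semi-infinite orbits. The second difficulty is compatibility of the unital structure on $\Ran$, i.e.\ the behavior when points collide. Here I would rely on the unital factorization formalism of \cite{GLC2}.
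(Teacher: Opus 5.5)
The paper does not prove this statement; it imports it from \cite{GLC2}. Your sketch therefore has to stand on its own.

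Your ingredients are the right ones: naive Satake acting on the factorization unit of $\on{Whit}^!(G)$, and Frenkel--Gaitsgory--Vilonen at a point. The derived pointwise statement is not actually the main difficulty. FGV's analysis shows that only the orbits $S^\lambda$ with $\lambda$ dominant carry Whittaker sheaves and that the resulting $W^\lambda$ are clean. Combined with pro-unipotence of the stabilizers $\on{Ad}_{t^\lambda}(L^+N)$, this already gives $\on{Whit}^!(G)_x\simeq\bigoplus_{\lambda\in\Lambda^+}\on{Vect}$, with no higher Exts to kill.

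The gap is in Step 3. You invoke the principle that an equivalence of sheaves of categories on $\Ran$ can be detected on fibers at $k$-points, and hence, by factorization, at single points of $X$. That principle is false, even with compact generation and preservation of compacts. It already fails over $X$:
\begin{itemize}
\item Let $\mathcal{K}=\on{colim}_U j_{U,*}\omega_U$ be the D-module of the generic point. Then $i_x^!\mathcal{K}=0$ for every closed point $x$.
\item Take the square-zero extension $\omega_X\to A=\omega_X\oplus\mathcal{K}$. Induction $\Dmod(X)\to A\on{-mod}(\Dmod(X))$ preserves compacts and is the identity of $\on{Vect}$ on every closed fiber.
\item Yet it is not fully faithful: $\on{Hom}(\omega_X,\mathcal{K})$ is, up to shift, $\on{colim}_U H^{*}_{dR}(U)\neq 0$.
\end{itemize}
So ``where all fibers agree'' is precisely the step that is not justified.

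Here is what the argument actually needs. On each $X^I$, stratify by diagonals. A $\Dmod(X^I)$-linear functor that is an equivalence on each stratum is an equivalence by recollement. The paper phrases this via a continuous right adjoint obtained from compact generation, in its treatment of $\on{IFLE}$; your second bullet plays that role. By factorization, each open stratum reduces to $F_X^{\boxtimes n}$ away from the diagonals.

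Thus collisions of points, which you flag as the second difficulty, are essentially formal. The passage from a point to the whole curve is not: you must show that $F_X$ is an equivalence over $X$. This needs an input beyond the pointwise theorem. Two options:
\begin{itemize}
\item Run FGV's orbit analysis in the family $\on{Gr}_{G,X}$, using relative orbits $S^\lambda_X$ for $\lambda\in\Lambda^+$ and cleanness in families. This gives $\on{Whit}^!(G)_X\simeq\bigoplus_{\lambda\in\Lambda^+}\Dmod(X)$ compatibly with $\on{Rep}(\Gcheck)\otimes\Dmod(X)$.
\item Show that both sides, and your functor, are obtained from their fibers at $x$ by twisting along the $\on{Aut}(D)$-torsor of coordinates. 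This is what the $\rho(\omega_X)$-twist makes possible.
\end{itemize}
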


\sssec{}
In Section \ref{Section AB}, we construct a morphism of pairs
$$
\on{AB} : (\on{Whit}^!(G), \on{Whit}^!(\on{Fl}_G)) \longrightarrow (\on{Rep}(\Gcheck), \QCoh(\ncheck/\Bcheck))
$$
whose first component is $\on{CS}_G$. We also prove

\begin{thm} \label{Theorem AB}
    The morphism of pairs $\on{AB}$ is an equivalence in $\FactModCat$.
\end{thm}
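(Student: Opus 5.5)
Since the first component of $\on{AB}$ is $\on{CS}_G$, which is already an equivalence in $\FactAlgCat$ by the Proposition above (from \cite{GLC2}), it suffices to show that the second component
$$
\on{AB}^{\on{mod}} : \on{Whit}^!(\on{Fl}_G) \longrightarrow \QCoh(\ncheck/\Bcheck),
$$
a morphism of factorization module categories at $x_0$ compatible with $\on{CS}_G$, is an equivalence. A morphism of factorization module categories is an equivalence once it is an equivalence fiberwise over the Ran space with a marked point. That is, it suffices to check that for every finite set of points $\underline{x} = \{x_0, x_1, \dots, x_n\}$ containing $x_0$, the induced functor on fibers is an equivalence. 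I would carry out this reduction first. It needs that the relevant categories are sheaves of categories over $\Ran_{x_0}$ which are dualizable (or at least that equivalences can be detected on fibers), which I take to follow from the construction in Section \ref{Factorization}.

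Next, I would use the factorization structure to compute the fibers. Let $\underline{x}$ be a tuple with $x_0$ and distinct points $x_1,\dots,x_n \neq x_0$. The fiber of a factorization module category splits as the module fiber at $x_0$ tensored with the factorization category fibers at $x_1,\dots,x_n$. The analogous splitting holds on the spectral side. Moreover, $\on{AB}$ is compatible with these splittings: on the module fiber at $x_0$ it gives $\on{AB}_{x_0}$, and at each $x_i$ it gives $\on{CS}_{G,x_i}$. Hence on such fibers the functor is
$$
\on{AB}_{x_0} \otimes \bigotimes_i \on{CS}_{G,x_i},
$$
which is an equivalence by the pointwise Arkhipov--Bezrukavnikov theorem and the pointwise Casselman--Shalika equivalence. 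Points colliding among themselves away from $x_0$ are already handled by $\on{CS}_G$ being a factorization equivalence. The only genuinely new input is therefore the fiber over $x_0$ alone, i.e. that the functor at $x_0$ recovers the classical $\on{AB}_{x_0}$.

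The main obstacle is precisely this identification: the functor constructed in Section \ref{Section AB} must restrict at $x_0$ to the Arkhipov--Bezrukavnikov equivalence, rather than to some other functor agreeing with it only on objects. My approach would go through the standard characterization of $\on{AB}_{x_0}$. Both sides are generated under colimits and the action of $\on{Rep}(\Gcheck)_{x_0}$ (resp. $\on{Sph}$ via $\on{CS}$) by Wakimoto-type objects indexed by coweights. These correspond to the line bundles $\mathcal{O}(\lambda)$ on $\ncheck/\Bcheck$, together with the $\Bcheck$-equivariance data. I would check that the constructed functor sends the Whittaker-averaged Wakimoto objects $J_\lambda$ to $\mathcal{O}(\lambda)$ compatibly with the central (Gaitsgory) action of $\on{Rep}(\Gcheck)$ and the monodromy endomorphism that produces the $\ncheck$-coordinate. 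Then I would compare Hom spaces between these generators, or equivalently identify the monad of $\QCoh(\ncheck/\Bcheck)$ over $\on{Rep}(\Tcheck)\otimes\on{Rep}(\Gcheck)$. Once the functor on generators and the endomorphism algebra match the pointwise AB construction, full faithfulness and essential surjectivity at $x_0$ follow. Combined with the factorization reduction, this yields the theorem.
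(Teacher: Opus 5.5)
\textbf{Gap: the reduction to fibers.} Your first reduction is false as stated. A morphism of sheaves of categories on $\Ran_{x_0}$, even between dualizable ones, is not detected by its fibers at $k$-points. Already on one copy of the curve, with $U=X\setminus x_0$, consider the functor
\[
\Dmod(X)\to\Dmod(U)\times\Dmod(\{x_0\}),\qquad \mathcal F\mapsto (j^!\mathcal F,\, i^!\mathcal F).
\]
It is $\Dmod(X)$-linear, and it is an equivalence on the fiber at every point, and even on both strata $U$ and $\{x_0\}$. Yet it is not fully faithful: the nonsplit triangle $i_*i^!\omega_X\to\omega_X\to j_*\omega_U$ gives a nonzero class in $\on{Hom}(j_*\omega_U,i_*i^!\omega_X[1])$, and the functor sends this class to zero.

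In general $\mathcal M_X$ is glued from $\mathcal M_U$, $\mathcal M_{x_0}$ and the gluing functor $i^\bullet j_\bullet:\mathcal M_U\to\on{Pro}(\mathcal M_{x_0})$. A factorization functor intertwines this gluing only laxly; this is exactly the lax square the paper must show is strict in the conservativity argument of Section~3.1.

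Note also that $k$-points of $\Ran_{x_0}$ never see collisions. The fiber at $\{x_0,x_1,\dots,x_n\}$ splits by the very definition of the module structure, so your fiberwise computation is tautological. The assertion that ``the only genuinely new input is the fiber over $x_0$'' is precisely what still has to be proved.

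The other route the paper uses for $\on{IFLE}$ is also unavailable here. That route gets a continuous right adjoint from compact generation and then argues stratum by stratum. But $\on{Whit}^!(\on{Fl}_G)$ is probably not compactly generated.

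\textbf{The missing idea: fusability.} Both sides are fusable. For $\on{Whit}^!(\on{Fl}_G)$ this comes from fusability of $\Dmod(\on{Fl}_G)$ via the family $\on{Fl}'\to X$ of \cite{Braiding}. For the spectral side, $\QCoh(\ncheck/\Bcheck)\simeq\on{Fact}^{\on{restr}}(\QCoh(\ncheck/\Bcheck)_{x_0})$ by construction. Bogdanova's theorem gives two facts: $\on{Fact}^{\on{restr}}$ is fully faithful, and $\on{cores}_{x_0}\circ\on{Fact}^{\on{restr}}\simeq\on{oblv}$, which is conservative. Together these show that a factorization functor between fusable module categories is an equivalence if and only if its fiber at $x_0$ is.

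\textbf{The fiber at $x_0$.} Here you set yourself a harder task than needed. The paper does not identify $\on{AB}_{x_0}$ with the classical Arkhipov--Bezrukavnikov functor; it explicitly allows them to differ by an autoequivalence. It only needs $\on{AB}_{x_0}$ to be an equivalence, and this holds by construction. $\on{AB}_{x_0}$ is the composite of:
\begin{itemize}
\item the equivalence $\on{Whit}^!(\on{Fl}_G)_{x_0}\simeq\on{Whit}^!(LG)^{\frac{\infty}{2}}_{x_0}$ from \cite{CPSC2};
\item the identification with $\Omega$-factorization modules in $\on{Whit}^!(T)$, following \cite{GLC3};
\item the equivalence $\on{CS}_T$;
\item the identification of $\QCoh(\ncheck/\Bcheck)_{x_0}$ with $\Omega^{\on{spec}}$-factorization modules in $\on{Rep}(\Tcheck)$.
\end{itemize}
Your plan to match Wakimoto generators and Hom spaces is not carried out. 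You never determine what the constructed functor does on $J_\lambda$, nor on morphisms and higher structure. So, as written, that step is also unproven, and it aims at a stronger statement than the theorem requires.
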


\begin{prop} \cite{GLC2}
    There is a monoidal equivalence, called (derived) geometric Satake
    $$
    \on{Sat}_{G} : \on{Sph}_G \longrightarrow \on{Sph}_{\Gcheck}^{\on{spec}}
    $$
    in $\on{Alg}(\FactAlgCat)$. The equivalence $\on{CS}_G$ is equivariant with respect to $\on{Sat}_G$.
\end{prop}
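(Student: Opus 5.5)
The statement immediately above is the last one, the Proposition attributed to \cite{GLC2}: derived geometric Satake $\on{Sat}_G$ is a monoidal equivalence in $\on{Alg}(\FactAlgCat)$, and $\on{CS}_G$ is equivariant with respect to it. Since it is cited, the plan is to assemble it from the constructions of \cite{GLC2} and check that they have the stated form. I would not reprove anything from scratch.

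First, I would recall the definitions.
\begin{itemize}
\item $\on{Sph}_G$ is the factorization category $\Dmod(\mathfrak{L}^+G\backslash \mathfrak{L}G/\mathfrak{L}^+G)$ over $\Ran$.
\item $\on{Sph}_{\Gcheck}^{\on{spec}}$ is the factorization category $\IndCoh_{\on{Nilp}}$ of the spectral Hecke stack $\on{LS}_{\Gcheck}^{\on{reg}}\times_{\on{LS}_{\Gcheck}^{\on{mer}}}\on{LS}_{\Gcheck}^{\on{reg}}$.
\end{itemize}
Convolution makes each an algebra object in $\FactAlgCat$. For $\on{Sph}_G$ this is pull-push along the factorizable convolution diagram $\on{Gr}\tilde\times \on{Gr}$. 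For the spectral side it is composition of correspondences. In both cases one uses that the relevant maps are ind-proper, resp. have the needed base change, uniformly over $\Ran$.

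Second, I would produce the functor $\on{Sat}_G$ itself. The route I would take is the Whittaker route. $\on{Sph}_G$ acts on $\on{Whit}^!(G)$, and we have the factorizable $\on{CS}_G\colon \on{Whit}^!(G)\simeq\on{Rep}(\Gcheck)$. Acting on the unit $\on{Whit}$-vacuum therefore gives a factorizable monoidal functor
\[
\on{Sph}_G\to \on{End}_{\on{Rep}(\Gcheck)}(\ldots)\simeq \on{Sph}_{\Gcheck}^{\on{spec}}.
\]
Here $\on{Sph}_{\Gcheck}^{\on{spec}}$ is identified with endomorphisms of $\on{Rep}(\Gcheck)$ as a module over $\QCoh(\on{LS}_{\Gcheck}^{\on{reg}})$, with nilpotent singular support. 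By construction this makes $\on{CS}_G$ equivariant.

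Third, to check that this functor is an equivalence, it suffices to work pointwise, since equivalences of factorization categories can be detected on fibers at points of $\Ran$. At a single point this is the derived Satake theorem of Bezrukavnikov--Finkelberg, in the Arinkin--Gaitsgory renormalized form with singular support $\on{Nilp}$.

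The main obstacle is the second step: making the monoidal structure and the Whittaker-equivariance coherent in $\on{Alg}(\FactAlgCat)$, rather than merely pointwise. Here I would rely on \cite{GLC2}'s unital factorization formalism, in which both the monoidal structure and the module structure are encoded by factorization correspondences.
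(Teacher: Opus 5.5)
The gap is in your second step: the identification of $\on{Sph}_{\Gcheck}^{\on{spec}}$ with an endomorphism category is false.

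As written it is degenerate. Since $\on{Rep}(\Gcheck)=\QCoh(\on{LS}_{\Gcheck}^{\on{reg}})$, its endomorphisms as a $\QCoh(\on{LS}_{\Gcheck}^{\on{reg}})$-module are just $\on{Rep}(\Gcheck)$; the base should be $\QCoh(\on{LS}_{\Gcheck}^{\on{mer}})$. Even after that correction, such an endomorphism category is of $\QCoh$-type. Pointwise it is at best $\QCoh(\on{Hecke}^{\on{spec}}_x)$, with $\on{Hecke}^{\on{spec}}_x=\on{pt}/\Gcheck\times_{\gcheck/\Gcheck}\on{pt}/\Gcheck$. That is the tempered part, not $\IndCoh_{\on{Nilp}}(\on{Hecke}^{\on{spec}}_x)$. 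The action of $\on{Sph}_{\Gcheck}^{\on{spec}}$ on $\on{Rep}(\Gcheck)$ factors through $\Psi\colon\IndCoh\to\QCoh$, so the nonzero objects of $\ker\Psi$ act by zero. No singular-support condition imposed on an endomorphism category can bring them back. Accordingly, the functor you get from the Whittaker action is not an equivalence: it kills the anti-tempered objects. Compare $\iota^!$, which kills the infinitely connective objects in the paper's proof that $\on{B}_{x_0}$ is an equivalence.

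What is missing is the approximation-plus-renormalization step, which is how the paper builds $\on{B}$.
\begin{enumerate}
\item Via $\on{CS}_G$ and $\on{Whit}^!(G)^\vee\simeq\on{Whit}^*(G)$, the Whittaker action gives a monoidal factorization functor $\on{Sph}_G\to\on{Rep}(\Gcheck)^{\otimes 2}$. It sends the unit to $R_{\Gcheck}$, hence gives a functor to $R_{\Gcheck}\on{-FactMod}(\on{Rep}(\Gcheck)^{\otimes 2})$.
\item Apply Proposition \ref{alg approximation} with $\mathcal{Y}=\on{pt}/\Gcheck\times X_{dR}$ and $\mathcal{Y}_0=\on{pt}/\Gcheck^{2}\times X_{dR}$, so that $\mathcal{T}$ is the spectral Hecke stack. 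It shows that $\on{Sph}_{\Gcheck}^{\on{spec}}$ maps to the same category by a functor that is an equivalence on eventually coconnective parts.
\item You need the renormalization in $\on{Sph}_G$, which you omitted along with the critical twist. It guarantees that the compact generators land in the eventually coconnective part, so $\on{Sat}_G$ can be defined on them and ind-extended.
\end{enumerate}

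Your third step also needs more than you state. The functor obtained this way is not a priori the Bezrukavnikov--Finkelberg functor. You must either identify it with that one or prove the pointwise equivalence directly. The paper does the latter for $\on{B}_{x_0}$: it matches the composite with $\Psi$ against the Whittaker functor, whose kernel is the infinitely connective objects, and then renormalizes through compact/pseudo-compact objects.

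Passing from points to $\Ran$ is not free either. A $\Dmod(X)$-linear functor can be an equivalence on every closed-point fiber without being an equivalence. One needs compact generation and preservation of compacts to get a continuous right adjoint and glue along strata. This is done in \cite{GLC2}, Section 6.2, and in the paper's reduction for $\on{IFLE}$.
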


\sssec{}
In Section \ref{Section B}, we construct a morphism of monoidal pairs
$$
\on{B} : (\on{Sph}_G, \on{Aff}_G) \longrightarrow (\on{Sph}_{\Gcheck}^{\on{spec}}, \on{Aff}_{\Gcheck}^{\on{spec}})
$$
whose first component is $\on{Sat}_G$. We also prove

\begin{thm} \label{Theorem B}
    The morphism of monoidal pairs $\on{B}$ is an equivalence in $\on{Alg}(\FactModCat)$, such that the equivalence of pairs $\on{AB}$ is equivariant with respect to it.
\end{thm}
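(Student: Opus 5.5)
The argument reduces to the pointwise Bezrukavnikov equivalence $\on{B}_{x_0}$, using that $\on{B}$ is built compatibly with $\on{AB}$ and that $\on{AB}$ is already an equivalence by Theorem \ref{Theorem AB}.

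\emph{Reduction to the fibre at $x_0$.} A morphism of pairs $(\mathcal{C},\mathcal{M})\to(\mathcal{C}',\mathcal{M}')$ in $\FactModCat$ is an equivalence exactly when its first component is an equivalence of factorization categories and its second component is an equivalence of module categories. For the second component, factorization identifies the restriction of $\mathcal{M}$ to the stratum of $\Ran_{x_0}$ where the mobile points are disjoint from $x_0$ with $\mathcal{C}$ (on the mobile points) tensored with $\mathcal{M}_{x_0}$. It therefore suffices, by a stratification and gluing argument, that the first component is an equivalence and that the fibre $\mathcal{M}_{x_0}\to\mathcal{M}'_{x_0}$ is an equivalence. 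For $\on{B}$, the first component is $\on{Sat}_G$, already an equivalence in $\on{Alg}(\FactAlgCat)$. So the main content is that the fibre of $\on{B}$ at $x_0$ recovers Bezrukavnikov's equivalence $\on{B}_{x_0}:\on{Aff}_{G,x_0}\simeq \on{Aff}^{\on{spec}}_{\Gcheck,x_0}$, and that monoidality and the module structure are respected factorizably.

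\emph{Construction of $\on{B}$ and equivariance of $\on{AB}$.} I would construct $\on{B}$ through the Whittaker model rather than directly. The action of $\on{Aff}_G$ on $\on{Whit}^!(\on{Fl}_G)$ by convolution gives a monoidal functor
$$\on{Aff}_G\to \on{End}_{\on{Whit}^!(G)}(\on{Whit}^!(\on{Fl}_G)).$$
Transporting along $\on{AB}$ (Theorem \ref{Theorem AB}) identifies the target with $\on{End}_{\on{Rep}(\Gcheck)}(\QCoh(\ncheck/\Bcheck))$, which contains $\on{Aff}^{\on{spec}}_{\Gcheck}$ as the Steinberg-type convolution category $\IndCoh$ of $\ncheck/\Bcheck\times_{\gcheck/\Gcheck}\ncheck/\Bcheck$ with suitable support. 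Once the monoidal functor is shown to factor through this subcategory, the equivariance of $\on{AB}$ with respect to $\on{B}$ holds by construction. Compatibility on first components reduces to the stated fact that $\on{CS}_G$ is $\on{Sat}_G$-equivariant.

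\emph{Factorization through $\on{Aff}^{\on{spec}}$ and the fibre check (main obstacle).} The hard step is showing that the monoidal functor lands in the image of $\on{Aff}^{\on{spec}}_{\Gcheck}$ in a way compatible with the factorization module structure over $\on{Sph}$, not just at $x_0$. My plan has four parts.
\begin{enumerate}
\item Work at $x_0$ first and use Bezrukavnikov's original argument: Wakimoto and central sheaves generate, and the functor to $\on{Whit}$ is fully faithful on the anti-spherical quotient.
\item Extend to $\Ran_{x_0}$ by checking that the Gaitsgory central functor $\on{Sph}_G\to\on{Aff}_G$ (nearby cycles from the mobile point colliding with $x_0$) matches the spectral pullback $\on{Sph}^{\on{spec}}_{\Gcheck}\to\on{Aff}^{\on{spec}}_{\Gcheck}$. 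This should follow because both are determined, after applying $\on{AB}$, by the $\on{Rep}(\Gcheck)$-module structure on $\QCoh(\ncheck/\Bcheck)$ arising from collision.
\item Deduce full faithfulness and essential surjectivity at $x_0$ from $\on{B}_{x_0}$.
\item Conclude via the reduction above, since $\on{Alg}(\FactModCat)$ detects equivalences on underlying pairs.
\end{enumerate}
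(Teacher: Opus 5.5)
\textbf{Main gap: the construction of $\on{B}$.} You assert that $\on{End}_{\on{Rep}(\Gcheck)}(\QCoh(\ncheck/\Bcheck))$ contains $\on{Aff}^{\on{spec}}_{\Gcheck}$, and that it suffices to show the action functor factors through it. The inclusion actually goes the other way. At $x_0$ the endomorphism category is $\QCoh(\tilde{\check{\mathcal{N}}}/\Gcheck\times_{\gcheck/\Gcheck}\tilde{\check{\mathcal{N}}}/\Gcheck)$. This sits inside $\on{Aff}^{\on{spec}}_{\Gcheck,x_0}=\IndCoh(\tilde{\check{\mathcal{N}}}/\Gcheck\times_{\gcheck/\Gcheck}\tilde{\check{\mathcal{N}}}/\Gcheck)$ as the tempered part. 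In the other direction, $\IndCoh$ maps to $\QCoh$ only via $\Psi$, which is not fully faithful.

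Concretely, the action functor $\iota^!$ kills exactly the infinitely connective objects of $\on{Aff}_{G,x_0}$. So $\iota^!$ is not fully faithful, and $\on{B}$ cannot be obtained by corestricting it to a full subcategory. The real task is to lift through $\Psi$: define $\on{B}$ on compact objects, check that they land in the eventually coconnective part where the comparison is an equivalence, and ind-extend.

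Factorizably, the paper does this in three steps:
\begin{enumerate}
\item The unit of $\on{Sph}_G$ goes to $R_{\Gcheck}$, so the action lands in $R_{\Gcheck}\on{-FactMod}[(\on{Rep}(\Gcheck),\QCoh(\ncheck/\Bcheck))^{\otimes 2}]$.
\item By Theorem \ref{approximation}, this agrees with $(\on{Sph}^{\on{spec}}_{\Gcheck},\on{Aff}^{\on{spec}}_{\Gcheck})$ on eventually coconnective parts.
\item Since $\on{Aff}_G$ is probably not compactly generated, the argument is run on the compactly generated subcategory $\on{Aff}_{G,\on{Sph}^{\on{spec}}_{\Gcheck}\on{-QLisse}}$. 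It is then extended to all of $\on{Aff}_G$ using fusability of $\on{FactRes}_{\on{Rep}(\Gcheck)\to\on{Sph}^{\on{spec}}_{\Gcheck}}(\on{Aff}_G)$.
\end{enumerate}
Your plan has no substitute for any of these steps. In particular, the equivariance of $\on{AB}$ ``by construction'' has nothing to rest on until the functor exists.

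\textbf{The pointwise step is missing its key input.} Your item (3), deducing full faithfulness and essential surjectivity ``from $\on{B}_{x_0}$'', is circular. What is needed is an identification of the fibre of the factorizably constructed functor with a known equivalence. Citing Bezrukavnikov as a black box is fine; the paper instead rebuilds $\on{B}^{\on{classical}}$ following Dhillon--Taylor, via a Morita argument for $\iota^!$ and renormalization by pseudocompact objects. Either way, you must then show $\Psi_{x_0}\circ\on{B}_{x_0}\simeq\iota^!$, together with the compatibility datum for the $\QCoh(\gcheck/\Gcheck)$-action. The paper extracts this datum from the $\QCoh(\on{LS}^{\on{mer}}_{\Gcheck})$-action on $(\on{Rep}(\Gcheck),\QCoh(\ncheck/\Bcheck))$. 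Matching central functors, as in your item (2), only controls a single collision with $x_0$ and does not produce this datum or the full factorization module structure.

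\textbf{The reduction to $x_0$.} Your ``stratification and gluing'' argument passes over what the paper treats as substantive. The paper reduces to $x_0$ only after showing two things: the factorization restrictions of both $\on{Aff}_G$ and $\on{Aff}^{\on{spec}}_{\Gcheck}$ to $\on{Rep}(\Gcheck)$ are fusable, and $\on{FactRes}$ is conservative. The proof of conservativity is precisely a check that $F$ strictly intertwines the gluing functors $i^*j_*$. If you want to bypass this, you need to justify that point explicitly.
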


\begin{prop} \cite{GLC2}
    There is an equivalence, called the Fundamental Local Equivalence, of factorization categories
    $$
    \on{FLE}: \on{KL}(G)_{\on{crit}} \longrightarrow \IndCoh^*(\on{Op}_{\Gcheck}^{\on{mon-free}})
    $$
    that is equivariant with respect to $\on{Sat}_G$.
\end{prop}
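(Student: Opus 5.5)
This statement is cited from \cite{GLC2}, so the plan is to assemble it from results established there rather than to reprove the underlying representation theory. We work pointwise first and then upgrade to factorization. At a point $x$, the Fundamental Local Equivalence is obtained as a composite. First, the Feigin--Frenkel isomorphism identifies the center of the critical-level vacuum module with $\on{Fun}(\on{Op}_{\Gcheck}^{\on{reg}})$. Second, the localization theorem of Frenkel--Gaitsgory describes $\on{KL}(G)_{\on{crit}}$ as a category spread over the monodromy-free opers. Third, Raskin's theorem $\on{Whit}(\gmod_{\on{crit}}) \simeq \IndCoh^*(\on{Op}^{\on{mer}}_{\Gcheck})$ is restricted along the Drinfeld--Sokolov functor to the $G(\mathcal{O})$-integrable part, giving $\IndCoh^*(\on{Op}^{\on{mon-free}}_{\Gcheck})$.

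Concretely, the functor $\on{FLE}$ will be defined as the composite
$$
\on{KL}(G)_{\on{crit}} \longrightarrow \on{Whit}(\gmod_{\on{crit}}) \simeq \IndCoh^*(\on{Op}^{\on{mer}}_{\Gcheck}),
$$
where the first arrow is the (factorizable) Drinfeld--Sokolov reduction. We will then check three things.

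\begin{enumerate}
\item The composite factors through $\IndCoh^*(\on{Op}^{\on{mon-free}}_{\Gcheck})$. This holds because objects of $\on{KL}$ are $G(\mathcal{O})$-integrable, so their DS-reductions are supported on opers with trivial monodromy.
\item All constructions are factorizable. The Feigin--Frenkel isomorphism, the Whittaker model, and the oper spaces are all defined over $\Ran$ and are compatible with union of points, so the composite is a morphism in $\FactAlgCat$.
\item The resulting factorization functor is an equivalence. Since being an equivalence of factorization categories can be checked fiberwise over $\Ran$, and by factorization on powers of a single point, it suffices to treat a single point $x$, where this is the pointwise FLE.
\end{enumerate}

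For $\on{Sat}_G$-equivariance, $\on{Sph}_G$ acts on $\on{KL}(G)_{\on{crit}}$ by convolution and $\on{Sph}^{\on{spec}}_{\Gcheck}$ acts on $\IndCoh^*(\on{Op}^{\on{mon-free}})$ via the map $\on{Op}^{\on{mon-free}} \to \on{LS}^{\on{reg}}_{\Gcheck}$. The key input is the Frenkel--Gaitsgory/Beilinson--Drinfeld compatibility of Hecke functors with opers: convolution with Satake sheaves on the vacuum module corresponds to tensoring with the associated vector bundles on opers. To pass from the abelian Satake equivalence to the derived one, we use that $\on{Sph}^{\on{spec}}$ is generated under the $\on{Rep}(\Gcheck)$-action together with the action of $\on{Fun}(\on{LS}^{\on{reg}})$. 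The latter matches the Feigin--Frenkel center action, via the identification of the derived Satake Ext algebra with $\on{Sym}(\gcheck[-2])$.

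The main obstacle is the pointwise equivalence in step (3), namely fully faithfulness of DS reduction restricted to $\on{KL}$. I would first try to prove it by reducing to the vacuum generator. This requires the computation $\on{End}(\mathbb{V}_{\on{crit}}) \simeq \on{Fun}(\on{Op}^{\on{reg}})$ together with its derived enhancement. For essential surjectivity, the support argument behind the "mon-free" condition should be handled by an induction over Hecke translates of the vacuum.
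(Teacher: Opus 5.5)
Your plan has a genuine gap at its core. Steps (1) and (3) treat $\IndCoh^*(\on{Op}^{\on{mon-free}}_{\Gcheck})$ as the full subcategory of $\IndCoh^*(\on{Op}^{\on{mer}}_{\Gcheck})$ of objects supported on monodromy-free opers. On that reading, $\on{FLE}$ would be Drinfeld--Sokolov reduction plus a support check, and being an equivalence would reduce to full faithfulness of DS on $\KL$.

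But $\pi\colon \on{Op}^{\on{mon-free}}_{\Gcheck}=\on{Op}^{\on{mer}}_{\Gcheck}\times_{\on{LS}^{\on{mer}}_{\Gcheck}}\on{LS}^{\on{reg}}_{\Gcheck}\to\on{Op}^{\on{mer}}_{\Gcheck}$ is not a (derived) monomorphism. Its self-fiber product is $\on{Op}^{\on{mon-free}}_{\Gcheck}\times_{\on{LS}^{\on{reg}}_{\Gcheck}}\on{Hecke}^{\on{spec}}_{\Gcheck}$, not the diagonal. Three consequences follow:
\begin{itemize}
\item $\pi_*$ is not fully faithful, so factoring through it is extra data, not a property.
\item A support argument only lands you in $\IndCoh^*(\on{Op}^{\on{mer}}_{\Gcheck})_{\on{Op}^{\on{mon-free}}}$. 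Pointwise, that category is $\on{Whit}^*(\on{Gr}_G)_x\otimes_{\on{Sph}_{G,x}}\on{KL}(G)_{\on{crit},x}$, not $\on{KL}(G)_{\on{crit},x}$.
\item DS is \emph{not} fully faithful on $\KL$, so the statement you single out as the main obstacle is false.
\end{itemize}
This already fails for a torus $T$. The critical level is $0$, and $\hat{\mathfrak t}_0\on{-mod}$ is (up to renormalization) modules over $\on{Sym}(\mathfrak t((t)))$, i.e.\ sheaves on $\check{\mathfrak t}((t))dt=\on{Op}^{\on{mer}}_{\check T}$. DS is the forgetful functor. On objects of $\on{KL}(T)_{\on{crit}}$ the residue acts semisimply with integral eigenvalues; this is the difference between $\on{Rep}(T)$ and $\on{Sym}(\mathfrak t)$-modules. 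So on the $\lambda$-summand, DS is pushforward along the codimension-$\dim T$ closed embedding $\{\on{Res}=\lambda\}\hookrightarrow\check{\mathfrak t}((t))dt$. Consequently $\on{End}(\on{DS}(\mathbb V))\simeq\on{End}_{\on{KL}(T)_{\on{crit}}}(\mathbb V)\otimes\Lambda^\bullet(\check{\mathfrak t})$, with $\Lambda^i$ in degree $i$.

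What replaces this is the argument of \cite{GLC2}. The paper only cites it here, but reproduces it step by step for the Iwahori version in Section~\ref{Section IFLE}.
\begin{itemize}
\item \textbf{The lift is constructed.} The $\on{Rep}(\Gcheck)$-action on $\KL$ via $\on{Sat}_G^{\on{nv}}$, the map $R_{\Gcheck,\on{Op}}\boxtimes Vac\to\on{coact}(Vac)$, and the approximation result (Proposition~\ref{alg approximation}) give a coaction of $\IndCoh^*(\on{Op}^{\on{mon-free}}_{\Gcheck})$ on $\KL$, i.e.\ an action of $\IndCoh^!(\on{Op}^{\on{mon-free}}_{\Gcheck})$. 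Pairing this with DS, viewed as an $\IndCoh^!(\on{Op}^{\on{mer}}_{\Gcheck})$-linear functor, yields $\KL\to\on{Fun}_{\IndCoh^!(\on{Op}^{\on{mer}}_{\Gcheck})}(\IndCoh^!(\on{Op}^{\on{mon-free}}_{\Gcheck}),\IndCoh^*(\on{Op}^{\on{mer}}_{\Gcheck}))\simeq\IndCoh^*(\on{Op}^{\on{mon-free}}_{\Gcheck})$.
\item \textbf{Reduction to a point needs more than ``check on fibers''.} One uses that $\KL$ is compactly generated and that $\on{FLE}$ preserves compacts. Then $\on{FLE}$ has a continuous right adjoint and can be tested stratum by stratum.
\item \textbf{The pointwise equivalence does not come from full faithfulness of DS.} Instead:
\begin{itemize}
\item DS becomes fully faithful only after applying $\on{Whit}^*(\on{Gr}_G)_x\otimes_{\on{Sph}_{G,x}}-$, using ind-properness of $\on{Gr}_G$.
\item Both sides then have essential image $\IndCoh(\on{Op}^{\on{mer}}_{\Gcheck,x})_{\on{Op}^{\on{mon-free}}}$, since Weyl modules go to $\mathcal O_{\on{Op}^{\check\lambda-\on{reg}}_{\Gcheck,x}}$ by Frenkel--Gaitsgory.
\item Both sides are spherically tempered ($\KL$ via t-exactness of the Hecke action). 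This makes $\on{Whit}^*(\on{Gr}_G)_x\otimes_{\on{Sph}_{G,x}}-$ conservative, which finishes the argument.
\end{itemize}
\end{itemize}

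Your equivariance paragraph also does not work as written. $\on{Fun}(\on{LS}^{\on{reg}}_{\Gcheck,x})=k$, and the degree-$0$ Feigin--Frenkel center cannot account for the degree-$2$ generators of $\on{Sym}(\gcheck[-2])$. In \cite{GLC2} the $\on{Sat}_G$-compatibility is established in Appendix E.10--E.11.
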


\sssec{}
In Section \ref{Section IFLE}, we construct a morphism of pairs
$$
\on{IFLE} : (\on{KL}(G)_{\on{crit}}, \on{IKL}(G)_{\on{crit}}) \longrightarrow (\IndCoh^*(\on{Op}_{\Gcheck}^{\on{mon-free}}), \IndCoh^*(\on{Op}_{\Gcheck}^{\on{mer}} \underset{\on{LS}_{\Gcheck}^{\on{mer}}}{\times} \ncheck/\Bcheck))
$$
whose first component is $\on{FLE}$. We also prove
\begin{thm} \label{Theorem IFLE}
    The morphism of pairs $\on{IFLE}$ is an equivalence in $\FactModCat$ that is equivariant with respect to $\on{B}$.
\end{thm}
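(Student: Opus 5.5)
Since the first component of $\on{IFLE}$ is $\on{FLE}$, which is already an equivalence of factorization categories by the result of \cite{GLC2} quoted above, it suffices to show that the second component, the functor of factorization module categories at $x_0$
$$
\IKL \longrightarrow \IndCoh^*(\on{Op}_{\Gcheck}^{\on{mer}} \underset{\on{LS}_{\Gcheck}^{\on{mer}}}{\times} \ncheck/\Bcheck),
$$
is an equivalence. The first reduction is that a morphism of factorization module categories is an equivalence as soon as its fiber at the point $x_0 \in \Ran_{x_0}$ is an equivalence. This holds because the fiber over a tuple containing $x_0$ factorizes as the module fiber at $x_0$ tensored with the fibers of the factorization categories at the other points, where we already have $\on{FLE}$. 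For collisions we use the standard argument: equivalence can be checked on strata of $\Ran_{x_0}$, given that the categories are dualizable, or have a compatible t-structure, so that the gluing data match. So the core claim is the pointwise statement $\on{IFLE}_{x_0}$.

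For the pointwise statement I would use the Iwahori--Whittaker/Arkhipov--Bezrukavnikov strategy, mirroring how $\on{FLE}$ is derived in \cite{GLC2} from $\on{CS}_G$ and $\on{Sat}_G$.
\begin{enumerate}
\item Express $\IKLx$ as $\hat{\mathfrak{g}}_{\on{crit}}\text{-mod}_{x_0}^{I}$. Express the right side as
$$
\IndCoh^*(\on{Op}^{\on{mer}}_{\Gcheck,x_0}) \underset{\QCoh(\on{LS}^{\on{mer}}_{\Gcheck})}{\otimes} \QCoh(\ncheck/\Bcheck).
$$
This uses the spectral decomposition of $\hat{\mathfrak{g}}_{\on{crit}}\text{-mod}$ over $\on{Op}^{\on{mer}}$, the Frenkel--Gaitsgory/Raskin localization.
\item Use the equivalence of pairs $\on{AB}$ of Theorem \ref{Theorem AB}, together with Theorem \ref{Theorem B}, to identify
$$
\IKLx \simeq \hat{\mathfrak{g}}_{\on{crit}}\text{-mod}_{x_0} \otimes_{\Dmod(\on{Fl}_G)} \on{Whit}^!(\on{Fl}_G)\text{-type data}.
$$
Concretely, use the $\on{Aff}_G$-action: the Iwahori invariants are obtained from $\on{KL}$ by an $\on{Aff}_G$-linear construction.
\item Transport across $\on{B}$, so that the right side becomes the corresponding $\on{Aff}^{\on{spec}}_{\Gcheck}$-linear construction on $\IndCoh^*(\on{Op}^{\on{mon-free}})$.
\end{enumerate}
Equivariance with respect to $\on{B}$ is then built into the construction, since the functor is defined as an $\on{Aff}$-linear functor.

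More concretely, $\IKLx$ should be generated under the $\on{Aff}_{G,x_0}$-action by the image of $\on{KL}(G)_{\on{crit},x_0}$ under averaging $G(O)\to I$, or rather by Wakimoto-type objects. Correspondingly, the right side is generated under $\on{Aff}^{\on{spec}}$ by pullback from $\on{Op}^{\on{mon-free}}$. This gives a fully faithful test. Compare endomorphisms of the generator, namely
$$
\on{Hom}(\on{Av}\,\mathbb{V}_{\on{crit}}, -) \quad\text{versus}\quad \mathcal{O}\text{ pulled back}.
$$
Then check that the Hom-actions of $\on{Aff}$ match via $\on{B}$. This reduces to $\on{FLE}$ plus the statement that $\on{Aff}_G \otimes_{\on{Sph}_G} \on{KL} \to \IKL$ is an equivalence, and similarly on the spectral side:
$$
\on{Aff}^{\on{spec}} \otimes_{\on{Sph}^{\on{spec}}} \IndCoh^*(\on{Op}^{\on{mon-free}}) \simeq \IndCoh^*\bigl(\on{Op}^{\on{mer}} \underset{\on{LS}^{\on{mer}}}{\times} \ncheck/\Bcheck\bigr).
$$
The spectral identity should follow by base change on $\on{LS}^{\on{mer}}$, using
$$
\on{Op}^{\on{mon-free}} = \on{Op}^{\on{mer}} \times_{\on{LS}^{\on{mer}}} \on{pt}/\Gcheck
\quad\text{and}\quad
\on{Aff}^{\on{spec}} = \IndCoh(\ncheck/\Bcheck \times_{\gcheck/\Gcheck} \ncheck/\Bcheck).
$$

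The main obstacle is the automorphic identity $\on{Aff}_G \otimes_{\on{Sph}_G} \on{KL}(G)_{\on{crit}} \simeq \IKL$ at $x_0$, a tame analogue of "Iwahori invariants of a $G(O)$-induced module". I would attack it with the semi-infinite/Wakimoto generation results of Frenkel--Gaitsgory: $\IKL$ is generated by images of convolution with $I$-orbit D-modules, and the resulting Hom computation reduces to spherical ones. Nontrivial input is needed on coconnectivity and renormalization, namely the choice of $\IndCoh^*$ versus $\QCoh$. Checking compactness of the generators is where I expect most of the real work.
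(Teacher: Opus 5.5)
\textbf{There is a genuine gap.} Your pointwise argument rests on two claims. The first is $\on{Aff}_G\otimes_{\on{Sph}_G}\on{KL}(G)_{\on{crit}}\simeq\IKL$ at $x_0$, i.e.\ that the Iwahori category is generated by Iwahori--Hecke translates of the spherical one. The second is the spectral analogue, that $\IndCoh^*(\on{Op}^{\on{mer}}_{\Gcheck}\times_{\on{LS}^{\on{mer}}_{\Gcheck}}\ncheck/\Bcheck)$ is generated under $\on{Aff}^{\on{spec}}_{\Gcheck}$ from $\IndCoh^*(\on{Op}^{\on{mon-free}}_{\Gcheck})$. Both are false, for a support reason.

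On the automorphic side, the center $\on{Fun}(\on{Op}^{\on{mer}}_{\Gcheck,x_0})$ acts on $\gmod_{\on{crit}}$ compatibly with the $LG$-action. Hence convolution with anything in $\on{Aff}_{G,x_0}$, or in $\Dmodh(I\backslash LG/L^+G)$, preserves set-theoretic support over $\on{Op}^{\on{mer}}_{\Gcheck,x_0}$. (Pointwise, $\on{Aff}_{G,x_0}$ is not a right $\on{Sph}_{G,x_0}$-module anyway.)
\begin{itemize}
\item Every object of $\on{KL}(G)_{\on{crit},x_0}$ is supported on $\on{Op}^{\on{mon-free}}_{\Gcheck,x_0}$.
\item The Verma modules $\mathbb{M}^{\check\lambda}$ generate $\IKLx$, and they satisfy $\on{End}(\mathbb{M}^{\check\lambda})\simeq\mathcal{O}_{\on{Op}^{\check\lambda-\on{nilp}}_{\Gcheck,x_0}}$.
\item $\on{Op}^{\check\lambda-\on{nilp}}_{\Gcheck,x_0}$ contains opers with nontrivial unipotent monodromy, so it is not contained in $\on{Op}^{\on{mon-free}}_{\Gcheck,x_0}$.
\end{itemize}
On the spectral side, every Hecke correspondence is a fiber product over $\on{LS}^{\on{mer}}_{\Gcheck}$, so it never changes the underlying meromorphic local system. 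Your base change therefore yields $\on{Op}^{\on{mer}}_{\Gcheck}\times_{\on{LS}^{\on{mer}}_{\Gcheck}}(\on{LS}^{\on{reg}}_{\Gcheck}\times_{\on{LS}^{\on{mer}}_{\Gcheck}}\ncheck/\Bcheck)$, which sits over trivial monodromy. It does not yield $\on{Op}^{\on{mer}}_{\Gcheck}\times_{\on{LS}^{\on{mer}}_{\Gcheck}}\ncheck/\Bcheck$, because $\ncheck/\Bcheck\to\on{LS}^{\on{mer}}_{\Gcheck,x_0}$ hits all unipotent monodromies. So $\on{IFLE}_{x_0}$ cannot be reduced to $\on{FLE}$ by Hecke functors: the nilpotent-monodromy part is genuinely new.

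\textbf{The missing idea, as the paper supplies it.} Replace ``restrict to spherical'' by the Iwahori--Whittaker functor $\on{Whit}^*(\on{Fl}_G)_{x_0}\otimes_{\on{Aff}_{G,x_0}}-$, and on the other side by $\QCoh(\ncheck/\Bcheck)_{x_0}\otimes_{\on{Aff}^{\on{spec}}_{\Gcheck,x_0}}-$; the two are matched via $\on{AB}$ and $\on{B}$.
\begin{itemize}
\item Both sides then embed fully faithfully into $\IndCoh(\on{Op}^{\on{mer}}_{\Gcheck,x_0})$. They have the same essential image, namely objects supported on $\on{Op}^{\on{nilp-mon}}_{\Gcheck,x_0}$.
\item The key input for the essential image is $\mathbb{M}^{\check\lambda}\mapsto\mathcal{O}_{\on{Op}^{\check\lambda-\on{nilp}}_{\Gcheck,x_0}}$, proved by a Frenkel--Gaitsgory character comparison.
\item One then shows both sides are Iwahori--Hecke tempered, so that this functor is conservative. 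For $\IKLx$ this uses the bounded amplitude of the twisted localization functors $\Gamma^{\check\lambda}$.
\end{itemize}

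\textbf{Your reduction from $\Ran_{x_0}$ to $x_0$ is also missing its real input.} Checking stratum by stratum requires $\on{IFLE}$ to have a continuous right adjoint; ``dualizable or compatible t-structure'' does not give this. The paper obtains it from compact generation of $\IKL$ by Vermas together with preservation of compacts. The latter uses three facts:
\begin{itemize}
\item compactness is detected by pushforwards of $\mathcal{O}(\lambda)$-twists to $\on{Op}^{\on{mer}}_{\Gcheck}$;
\item $\on{IFLE}(\mathbb{M}^{\check\lambda})(\lambda)\simeq\on{IFLE}(\mathbb{M}^{\check\lambda}\star J_\lambda)$;
\item $\mathcal{C}^I\to\mathcal{C}\to\on{Whit}^*(\mathcal{C})$ preserves compacts.
\end{itemize}
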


\subsection{The Structure of the Paper}
\sssec{}
We will now outline the structure of this paper.

\sssec{}
We begin by recalling the basics of factorization in Subsection \ref{Factorization Basics}. We do this mainly to fix notation, and refer the reader to \cite{factorization} for details.

\sssec{}
In Subsections \ref{Main Characters of AB}, \ref{Main Characters of B}, and \ref{Main Characters of IFLE}, we define the six factorization module categories that appear on either side of $\on{AB}$, $\on{B}$, and $\on{IFLE}$ respectively.

\sssec{}
In Subsection \ref{Approximation}, we prove Theorem \ref{approximation}, which allows us to approximate various factorization module categories by categories of factorization modules. This will be used to construct $\on{B}$ and $\on{IFLE}$. This is a factorization module categorical version of Proposition \ref{alg approximation}, which, informally speaking, approximates factorization categories by categories of factorization modules, and was proven in \cite{GLC2}.

\sssec{}
The next three sections are each devoted to constructing the equivalences $\on{AB}$, $\on{B}$, and $\on{IFLE}$ respectively.

\sssec{}
In Subsection \ref{Fusability}, we discuss the notion of \emph{Fusability}, which will appear in an upcoming paper \cite{BCG} of Bogdanova, Chen, and Gaitsgory. Fusability is a condition on factorization $\on{Rep}(\Gcheck)$-module categories that can be checked combinatorially, and it allows us to reduce showing that a functor between factorization $\on{Rep}(\Gcheck)$-module categories is an equivalence to showing that the fiber at $x_0$ is an equivalence of plain categories. We argue that $\on{Whit}^!(\on{Fl}_G)$ is fusable.

\sssec{}
In Subsection \ref{Construction of AB}, we construct the morphism of pairs $\on{AB}$ using a semi-infinite version constructed in \cite{GLC3}. Loc. cit. also shows that semi-infinite version of $\on{AB}$ is a pointwise equivalence, which allows us to conclude that $\on{AB}_{x_0}$ is an equivalence in Subsection \ref{AB is a Pointwise Equivalence}, thereby proving Theorem \ref{Theorem AB}.

\sssec{}
In Subsection \ref{Conservativity}, we show that the 2-functor of factorization restriction, defined in the unpublished document \cite{CF} of Chen and Fu, along $\on{Rep}(\Gcheck) \longrightarrow \on{Sph}_{\Gcheck}^{\on{spec}}$ is conservative. Because the image of $\on{Aff}_G$ under this functor is fusable, this will allow us to reduce showing that $\on{B}$ is an equivalence to showing that $\on{B}_{x_0}$ is an equivalence.

\sssec{}
In Subsection \ref{Construction of B}, we construct the monoidal morphism of pairs $\on{B}$ using the action of $\on{Aff}_G$ on $\on{QCoh}(\ncheck/\Bcheck)$ via $\on{AB}$. To show that $\on{B}_{x_0}$ is an equivalence, in Subsection \ref{B is a Pointwise Equivalence} we adapt ideas of Dhillon and Taylor contained in \cite{DT} from the Betti setting. This proves Theorem \ref{Theorem B}.

\sssec{}
In Subsection \ref{Construction of IFLE}, we construct $\on{IFLE}$. In Subsection \ref{Reduction to the Pointwise Equivalence}, we argue that $\IKL$ is compactly generated and that $\on{IFLE}$ preserves compacts, in order to reduce showing that $\on{IFLE}$ is an equivalence to showing that $\on{IFLE}_{x_0}$ is an equivalence.

\sssec{}
In Subsection \ref{Affine Temperedness}, we introduce the notion of \emph{Iwahori-Hecke Temperedness}. This is a condition on $\on{Aff}_{G, x_0}$-module categories that implies that tensoring $\on{Whit}^!(\on{Fl}_G)_{x_0} \underset{\on{Aff}_{G, x_0}}{\otimes} -$ is conservative.

\sssec{}
Finally, in Subsection \ref{IFLE is a Pointwise Equivalence}, we use the compatibility between $\on{B}$ and $\on{IFLE}$ to embed $\on{Whit}^!(\on{Fl}_G)_{x_0} \underset{\on{Aff}_{G, x_0}}{\otimes} -$ applied to both sides of $\on{IFLE}$ in an ambient category $\on{IndCoh}(\on{Op}_{\Gcheck, x_0}^{\on{mer}})$. Then we show that these embeddings match, and that the two sides of $\on{IFLE}$ are Iwahori-Hecke tempered to conclude the proof of Theorem \ref{Theorem IFLE}.

\subsection{Acknowledgements}
The author would like to express their deepest gratitude towards their advisor Dennis Gaitsgory, for his invaluable mentorship and support throughout the course of this project.

The author is also greatful to German Stefanich, Ko Aoki, Kevin Lin, Wyatt Reeves, Ekaterina Bogdanova, Sam Raskin, Lin Chen, Justin Campbell, Gurbir Dhillon, Elden Elmanto, Natalie Stewart, Isabel Longbottom, Dhilan Lahoti, Eunice Sukarto, Sonja Farr, and countless other people for the numerous helpful conversations and patient explanations without which this paper would not exist.

Part of this project was done during the author’s time at the Max Planck Institute for Mathematics in Bonn.

\section{Factorization} \label{Factorization}
\subsection{Factorization Basics} \label{Factorization Basics}
In this subsection, we will briefly review some definitions and concepts related to factorization. The purpose of this is mainly to fix notation; for a detailed discussion, see \cite{factorization}.

\sssec{}
Throughout this paper, we will fix for all time a smooth, projective, connected curve $X$ over an algebraically closed field $k$ of characteristic zero. We will also fix a closed point $x_0 \in X$.

\sssec{}
We begin by fixing notation for the spaces on which our factorization objects will live. Informally speaking, the pattern is as follows: our "algebra-like" objects (factorization categories, factorization algebras, factorization spaces) will live on $\Ran$, and after fixing an "algebra-object", a "module-like" object of the corresponding type (factorization module categories, factorization modules, factorization module spaces) will live on $\Ran_{x_0}$. This is because for us, the intended application of factorization "module-like" objects are to study level structure at the fixed point $x_0$.

%Ran Space
\begin{defn}
    Denote by $\Ran$ the unital Ran (categorical) prestack of $X$. For a test affine scheme $S$,
    $$
    \Ran(S) = \{finite \ subsets \ of \ X_{dR}(S)\}
    $$
    which is considered as a partially ordered set under inclusion.
\end{defn}

\begin{rem}
    We are taking the convention that $\Ran$ without any decorations is the unital version of Ran space, while other sources may use the same notation to mean the non-unital version, whose outputs are the \emph{set} of finite subsets of $X_{dR}(S)$ rather than the poset (and possibly excluding the empty subset).
    \\
    From now on, everything related to factorization will refer to the unital version by default unless explicitly stated otherwise. One exception is that factorization functors (i.e. morphisms of factorization categories and factorization module categories) are allowed to be $\emph{lax}$-unital rather than strictly unital.
\end{rem}

\sssec{}
A variant of this is the prestack $\Ran^{\subset}$, which classifies ordered pairs of finite subsets of $X$ such that the first is contained in the second. There are projection maps
$$
\on{pr}_{\on{small}}, \on{pr}_{\on{big}}: \Ran^{\subset} \longrightarrow \Ran
$$
sending a pair $A \subset B \subset X_{dR}(S)$ to $A$ and $B$ respectively.

%Ran_x_0 space
\begin{defn}
    Let $Y \to \Ran$ be a morphism. Then denote
    $$
    \Ran_Y = Y \underset{\Ran}{\times} \Ran^{\subset}
    $$
    where $\on{pr}_{\on{small}}$ is used to form the fiber product. Then there is a map
    $$
    \on{pr}_{\on{big}, Y} : \Ran_Y \longrightarrow \Ran
    $$
    induced by $\on{pr}_{\on{big}} : \Ran^{\subset} \to \Ran$.
    \\
    The most important case for us is when $Y = \on{pt}$ and $Y \to \Ran$ is given by the singleton containing $x_0$. In this case, we denote the corresponding prestack by $\Ran_{x_0}$.
\end{defn}

\begin{rem}
    For a test affine scheme $S$, $\Ran_{x_0}(S)$ classifies finite subsets of $X_{dR}(S)$ containing the constant map $S \to \on{pt} \overset{x_0}{\to} X_{dR}$. Thus, $\on{pr}_{\on{big}, x_0} : \Ran_{x_0} \to \Ran$ is an inclusion.
\end{rem}

\sssec{}
Next, we will define the notion of factorization category. Intuitively, this is a way to assign a category to each tuple of points $\underline{x} \subset X$ in families, in a way that cares about the collisions of points. Consider the subset
$$
    [\Ran \times \Ran]_{disj} \subset \Ran \times \Ran
$$
classifying disjoint pairs of finite subsets of $X$. Then form the diagram
$$
\Ran \times \Ran \supset [\Ran \times \Ran]_{disj} \overset{\cup}{\to} \Ran.
$$
%FactAlgCat
\begin{defn}
    A factorization category is a sheaf of categories $\mathcal{C}$ on $\Ran$ together with factorization data, which consists of an equivalence
    $$
        (\mathcal{C} \boxtimes \mathcal{C})|_{disj} \simeq \cup^* \mathcal{C}
    $$
    as well as higher homotopy coherences.
    \\
    We will denote the category of factorization categories by $\FactAlgCat$.
\end{defn}

\sssec{}
To define the notion of factorization module category (at $x_0$), we will consider the subset
$$
    [\Ran \times \Ran_{x_0}]_{disj} \subset \Ran \times \Ran_{x_0}
$$
defined in the obvious way. We can again form a diagram
$$
\Ran \times \Ran_{x_0} \supset [\Ran \times \Ran_{x_0}]_{disj} \overset{\cup}{\to} \Ran_{x_0}.
$$

%FactModCat_x_0
\begin{defn}
    Let $\mathcal{C}$ be a factorization category. A factorization $\mathcal{C}$-module category (at $x_0$) is a sheaf of categories $\mathcal{M}$ on $\Ran_{x_0}$ together with factorization module data over $\mathcal{C}$, which consists of an equivalence 
    $$
        (\mathcal{C} \boxtimes \mathcal{M})|_{disj} \simeq \cup^* \mathcal{M}
    $$
    as well as higher homotopy coherences.
    \\
    We will denote the category of factorization $\mathcal{C}$-module categories by $\mathcal{C}-\FactModCat$.
\end{defn}

\begin{rem}
    Instead of fixing a factorization category $\mathcal{C}$, we sometimes wish to consider \emph{pairs} $(\mathcal{C}, \mathcal{M})$, where $\mathcal{M}$ is a factorization $\mathcal{C}$-module category. The category of such pairs is denoted $\FactModCat$, and a morphism
    $$
    (\mathcal{C}_1, \mathcal{M}_1) \to (\mathcal{C}_2, \mathcal{M}_2)
    $$
    in this category consists of a factorization functor (i.e. a morphism in $\FactAlgCat$)
    $$
    \mathcal{C}_1 \to \mathcal{C}_2
    $$
    and a morphism of sheaf of categories (on $\Ran_{x_0}$)
    $$
    \mathcal{M}_1 \to \mathcal{M}_2
    $$
    compatible with the above factorization functor.
\end{rem}

\sssec{}
We will now discuss factorization structures on the level of objects.

%FactAlg(A)
\begin{defn}
    Let $\mathcal{C}$ be a factorization category. A factorization algebra $a$ in $\mathcal{C}$ is a global section of $\mathcal{C}$ equipped with factorization data, which consists of an isomorphism
    $$
    (a \boxtimes a)|_{disj} \simeq \cup^!a
    $$
    as well as higher homotopy coherences.
    \\
    We denote the category of factorization algebras in $\mathcal{C}$ by $\on{FactAlg(\mathcal{C})}$.
\end{defn}

%a-FactMod(M)_x_0
\begin{defn}
    Let $Y \to \Ran$. Furthermore, let $(\mathcal{C}, \mathcal{M}) \in \on{FactModCat}_{Y}$ (whose definition is obtained by replacing $\Ran_{x_0}$ by $\Ran_{Y}$ in the definition of $\FactModCat$) be a pair. Then for a factorization algebra $a$ in $\mathcal{C}$ and any $Z \to \Ran_Y$, a factorization $a$-module $m$ in $\mathcal{M}$ at $Z$ is a section of $\mathcal{M}$ on $Z$ equipped with factorization module data over $a$, which consists of an isomorphism
    $$
    (a \boxtimes m)|_{disj} \simeq \cup^!m
    $$
    as well as higher homotopy coherences.
    \\
    Denote the category of such objects by $a\on{-FactMod}_{Z}(\mathcal{M})$.
\end{defn}

\sssec{}
The reason we define factorization modules for more than just $x_0$ is that we want to construct factorization categories and factorization module categories of factorization modules by letting $Z$ vary.

\begin{defn}
    Let $(\mathcal{C}, \mathcal{M}) \in \FactModCat$ and let $a \in \on{FactAlg}(\mathcal{C})$. Consider the sheaf of categories 
    $$
    a\on{-FactMod}(\mathcal{C})
    $$
    on $\Ran$ that assigns to each test affine scheme $S$ over $\Ran$ the category $a\on{-FactMod}_S(\mathcal{C})$. This has the natural structure of a factorization category. Furthermore, consider the sheaf of categories 
    $$
    a\on{-FactMod}(\mathcal{M})
    $$
    on $\Ran_{x_0}$ that assigns to each test affine scheme $S$ over $\Ran_{x_0}$ the category
    $a\on{-FactMod}_S(\mathcal{M})$. This has the natural structure of a factorization $a\on{-FactMod}(\mathcal{C})$-module category at $x_0$.
\end{defn}

\sssec{}
We have described an algebraic method for producing factorization categories and factorization module categories. We will finish this subsection by describing a geometric way to produce factorization categories and factorization module categories.

\begin{defn}
    A factorization space is a space $\mathcal{Y} \to \Ran$ equipped with factorization data, which includes an isomorphism
    $$
    (\mathcal{Y} \times \mathcal{Y}) \underset{\Ran \times \Ran}{\times} [\Ran \times \Ran]_{disj} \simeq \mathcal{Y} \underset{\Ran, \cup}{\times} [\Ran \times \Ran]_{disj}.
    $$
\end{defn}

\begin{defn}
    A factorization $\mathcal{Y}$-module space (at $x_0$) is a space $\mathcal{Z} \to \Ran_{x_0}$ equipped with factorization module data over $\mathcal{Y}$, which includes an isomorphism
    $$
    (\mathcal{Y} \times \mathcal{Z}) \underset{\Ran \times \Ran_{x_0}}{\times} [\Ran \times \Ran_{x_0}]_{disj} \simeq \mathcal{Z} \underset{\Ran_{x_0}, \cup}{\times} [\Ran \times \Ran_{x_0}]_{disj}.
    $$
\end{defn}

\sssec{}
    Given a "nice" sheaf theory with pullbacks and Kunneth formula, for example $\QCoh$, a factorization space $\mathcal{Y}$ gives rise to a factorization category via
    $$
    (S \to \Ran) \mapsto \QCoh(S \times_{\Ran} \mathcal{Y})
    $$
    and a factorization $\mathcal{Y}$-module space $\mathcal{Z}$ gives rise to a factorization $\QCoh(\mathcal{Y})$-module category via
    $$
    (S \to \Ran_{x_0}) \mapsto \QCoh(S \times_{\Ran_{x_0}} \mathcal{Z}).
    $$

\subsection{Main Characters of AB} \label{Main Characters of AB}
In this subsection and the two that follow, we will recall or define the various objects of $\FactAlgCat$ and $\FactModCat$ that will appear throughout this paper.

%Whit(Fl)
\sssec{}
We begin by recalling the definition of the factorization category $\on{Whit}^!(G)$ given in Section 1.3 of \cite{GLC2}. In order to do so, we first consider the Ran Grassmannian $\on{Gr}_{G, \on{Ran}}$. For a test affine scheme $S$ and $\underline{x} : S \to \on{Ran}$, a lift of $\underline{x}$ to $\on{Gr}_{G, \on{Ran}}$ is given by a pair
$$
(\mathcal{P}_G, \alpha)
$$
of a principal $G$-bundle $\mathcal{P}_G$ on $S \times X$, and a trivialization
$$
\alpha: \mathcal{P}_G|_{(S \times X) \setminus \Gamma_{\underline{x}}} \simeq \mathcal{P}_G^{triv}
$$
of $\mathcal{P}_G$ on the complement of the graph of $\underline{x}$ in $S \times X$.

\sssec{}
The Ran Grassmannian naturally has the structure of a (unital) factorization space, so we can produce a (unital) factorization category $\Dmod_{\frac{1}{2}}(\on{Gr}_G)$ (for a discussion of critically twisted D-modules, see Section 1.1 of \cite{GLC2}) defined by
$$
    \Dmod_{\frac{1}{2}}(\on{Gr}_G)_S = \Dmod_{\frac{1}{2}}(S \times_{\on{Ran}} \on{Gr}_{G, \on{Ran}}).
$$
Furthermore, there is an action of the factorization group ind-scheme $LG_{\on{Ran}}$ (and thus of the factorization subgroup ind-scheme $LN_{\on{Ran}}$) on $\on{Gr}_{G, \on{Ran}}$, and basically what we would like to do now is define $\on{Whit}^!(G)$ as the Whittaker invariant category of $\Dmod_{\frac{1}{2}}(\on{Gr}_G)$, i.e. as
$$
\Dmod_{\frac{1}{2}}(\on{Gr}_G)^{(LN, \chi)}
$$
or more precisely,
$$
S \mapsto \Dmod_{\frac{1}{2}}(\on{Gr}_G)_S^{(LN_S, \chi_S)}.
$$

\sssec{}
However, we must incorporate the geometric twisting construction from Section 1.2 of \cite{GLC2}, which we briefly repeat now.

Let $H \subset G$ is a subgroup, and let $Y$ be a space living over $X$ equipped with an $L^+H$-action. Also, let $\mathcal{P}_H$ be a principal $H$-bundle on $X$. Then the twist of $Y$ by $\mathcal{P}_H$ is
$$
    Y_{\mathcal{P}_H} = Y \times^{L^+H} (\mathcal{P}_H|_{D^\times}) = (Y \times (\mathcal{P}_H|_{D^\times}))/L^+H.
$$

In our case, we apply (a factorizable version of) the above construction to $\on{Gr_{G, \on{Ran}}}$, taking $H = T$ and $\mathcal{P}_T$ to be the factorization $T$-principal bundle $\rho(\omega_X) = 2\rho(\omega_X^{\otimes \frac{1}{2}})$.

Since $LN$ is normalized by $L^+T$, we can take the twist $LN_{\rho(\omega_X)}$. This is a (factorization) group ind-scheme which acts on $\on{Gr_{G, \on{Ran}, \rho(\omega_X)}}$. Finally, we define

\begin{defn}
    The (unital) factorization category $\on{Whit}^!(G) \in \FactAlgCat$ is
    $$
    \Dmod_{\frac{1}{2}}(\on{Gr}_{G, \rho(\omega_X)})^{(LN_{\rho(\omega_X)}, \chi_{\rho(\omega_X)})} .
    $$
\end{defn}

\sssec{}
Next, we will define the factorization $\on{Whit}^!(G)$-module category (at the point $x_0$) $\on{Whit}^!(\on{Fl}_G)$. Let $\on{Fl}_{G, \on{Ran}_{x_0}}$ be the following. For a test affine scheme $S$ and $\underline{x} : S \to \on{Ran}_{x_0}$, a lift of $\underline{x}$ to $\on{Fl}_{G, \on{Ran}_{x_0}}$ is given by a quadruple
$$
    (\mathcal{P}_G, \alpha, \mathcal{P}_B, \gamma)
$$
of a principal $G$-bundle $\mathcal{P}_G$ on $S \times X$, a trivialization
$$
\alpha: \mathcal{P}_G|_{(S \times X) \setminus \Gamma_{\underline{x}}} \simeq \mathcal{P}_G^{triv}
$$
of $\mathcal{P}_G$ on the complement of the graph of $\underline{x}$ in $S \times X$, and a reduction of structure group to $B$
$$
\gamma : \mathcal{P}_B \times^B G \simeq \mathcal{P}_G|_{S \times \{x_0\}}
$$
on $S \times \{x_0\}$.

\sssec{}
$\on{Fl}_{G, \on{Ran}_{x_0}}$ naturally has the structure of a factorization module space over the Ran Grassmannian. Furthermore, there is an action of the factorization module group ind-scheme $LG_{\on{Ran_{x_0}}}$ (i.e. the restriction of $LG_{\on{Ran}}$ to $\on{Ran_{x_0}} \subset \on{Ran}$ considered as a group object in factorization module spaces over $LG_{\on{Ran}}$) on $\on{Fl}_{G, \on{Ran}_{x_0}}$. Thus we can define $\on{Whit}^!(\on{Fl}_G)$ in the same way as $\on{Whit}^!(G)$ \emph{mutatis mutandis}, by forming the twist
$$
\on{Fl}_{G, \on{Ran}_{x_0}, \rho(\omega_X)}
$$
and taking twisted Whittaker invariants
$$
\Dmod_{\frac{1}{2}}(\on{Fl}_{G, \on{Ran}_{x_0}, \rho(\omega_X)})^{(LN_{\rho(\omega_X)}, \chi_{\rho(\omega_X)})}.
$$

\begin{defn}
    The pair $(\on{Whit}^!(G), \on{Whit}^!(\on{Fl}_G)) \in \FactModCat$ consists of the factorization category $\on{Whit}^!(G)$ and the factorization $\on{Whit}^!(G)$-module category 
    $$
    \on{Whit}^!(\on{Fl}_G) = \Dmod_{\frac{1}{2}}(\on{Fl}_{G, \on{Ran}_{x_0}, \rho(\omega_X)})^{(LN_{\rho(\omega_X)}, \chi_{\rho(\omega_X)})} .
    $$
\end{defn}

\sssec{}
Now we move on to the spectral side. In order to define the factorization category $\on{Rep}(\check{G})$ and the factorization $\on{Rep}(\check{G})$-module category $\QCoh(\ncheck/\Bcheck)$, we will recall the definition of regular and meromorphic local systems. A more thorough version of the discussion that will take up the remainder of this subsection can be found in Appendices B.3-B.7 of \cite{GLC2}.

\sssec{}
For an algebraic group $H$, the factorization space of regular $H$-local systems on the formal disk $\on{LS}_H^{\on{reg}}$ is defined as follows.

Given a test affine scheme $S$ and an $S$-point $\underline{x}: S \to \on{Ran}$, the formal disk around $\underline{x}$ is defined as the $X$-scheme
$$
    \hat{D}_{\underline{x}} = (S \times X) \underset{S_{dR} \times X_{dR}}{\times} \Gamma_{\underline{x}, dR}
$$
this descends to an $X_{dR}$-scheme since we can define
$$
     \hat{D}_{\underline{x}, \nabla} = (S \times X_{dR}) \underset{S_{dR} \times X_{dR}}{\times} \Gamma_{\underline{x}, dR} = S \underset{S_{dR}}{\times} \Gamma_{\underline{x}, dR}.
$$
and
$$
    \hat{D}_{\underline{x}} = \hat{D}_{\underline{x}, \nabla} \times_{X_{dR}} X.
$$

\sssec{}
Let $Y_{\nabla}$ be a space over $X_{dR}$, and denote
$$
Y = Y_{\nabla} \times_{X_{dR}} X.
$$
Now we will define the factorization space called the \emph{horizontal arc space of} $Y$ that is denoted by $L_{\nabla}^+Y$. A lift of $\underline{x}$ to $L_{\nabla}^+Y$ is given by an $X_{dR}$-map
$$
\hat{D}_{\underline{x}, \nabla} \to Y_{\nabla}.
$$

\sssec{}
We now define 
$$
\on{LS}_H^{\on{reg}} = L_{\nabla}^+(\on{pt}/H \times X).
$$
In other words, a lift of $\underline{x}$ to $\on{LS}_H^{\on{reg}}$ is given by the data of an $X_{dR}$-map
$$
\hat{D}_{\underline{x}, \nabla} \to \on{pt}/H \times X_{dR}.
$$
or equivalently a map
$$
\hat{D}_{\underline{x}, \nabla} \to \on{pt}/H.
$$

\begin{defn}
    The factorization category $\on{Rep}(\check{G}) \in \FactAlgCat$ is defined as 
    $$
    \QCoh(\on{LS}_{\check{G}}^{\on{reg}}).
    $$
\end{defn}

%\begin{rem}
%A priori, $\IndCoh$ is defined only on locally almost of finite type prestacks; $\IndCoh^*$ (and $\IndCoh^!$) extends the definition to infinite-type prestacks via Kan extension using *-push (resp. !-pull). See Appendix A.5 (resp. Appendix A.4) of \cite{GLC2} for the definition.
%\end{rem}

\sssec{}
The factorization space of meromorphic $H$-local systems on the formal disk $\on{LS}_{H}^{\on{mer}}$ is more subtle to define. For a test affine scheme $S$ and an $S$-point $\underline{x} : S \to \Ran$, we note that $\hat{D}_{\underline{x}}$ is an ind-affine ind-scheme. Writing
$$
\hat{D}_{\underline{x}} = \on{colim} \on{Spec}(A_i),
$$
we denote
$$
D_{\underline{x}} = \on{Spec}(\on{lim} A_i),
$$
or, in other words, we take the colimit in the category of affine schemes instead. The \emph{punctured formal disk around} $\underline{x}$ is then defined to be
$$
D_{\underline{x}}^{\times} = D_{\underline{x}} \setminus \Gamma_{\underline{x}}.
$$

\begin{example}
    If we take $S = \on{Spec} (k)$ and $\underline{x}$ to be the singleton set containing only $x \in X$, and we let $t$ be a local coordinate of $X$ at $x$, then
    $$
    \hat{D}_{\underline{x}} = \on{colim} \on{Spec}(k[t]/t^i) = \on{Spf} (k[[t]])
    $$
    and
    $$
    D_{\underline{x}} = \on{Spec}(\on{lim} k[t]/t^i) = \on{Spec} (k[[t]]).
    $$
    In this case,
    $$
    D_{\underline{x}}^{\times} = \on{Spec} (k((t))).
    $$
\end{example}

\sssec{}
    According to Appendix A.1 in \cite{Katia}, the $X$-schemes $D_{\underline{x}}$ and $D_{\underline{x}}^{\times}$ both descend over $X_{dR}$. We denote the descents by
    $$
    D_{\underline{x}, \nabla}
    $$
    and
    $$
    D_{\underline{x}, \nabla}^{\times}
    $$
    respectively. If $Y_{\nabla} \to X_{dR}$ is affine, an $X_{dR}$-map
    $$
    \hat{D}_{\underline{x}, \nabla} \to Y_{\nabla}
    $$
    is equivalent to an $X_{dR}$-map
    $$
    D_{\underline{x}, \nabla} \to Y_{\nabla}
    $$
    so we can define the \emph{horizontal loop space of} $Y$, $L_{\nabla}Y$ as the factorization space such that a lift of $\underline{x}$ to $L_{\nabla}Y$ is given by an $X_{dR}$-map
    $$
    D_{\underline{x}, \nabla}^{\times} \to Y_{\nabla}.
    $$

\begin{rem}
We emphasize that we are only defining $L_{\nabla}Y$ for \emph{affine} $Y_{\nabla} \to X_{dR}$. This is because we want there to be a map
$$
L_{\nabla}^+Y \to L_{\nabla}Y
$$
induced by
$$
D_{\underline{x}, \nabla}^{\times} \to D_{\underline{x}, \nabla}.
$$
Because of this, we cannot simply define $\on{LS}_{H}^{\on{mer}} = L_{\nabla}(\on{pt}/H \times X)$, since $\on{pt}/H \times X_{dR}$ is not affine over $X_{dR}$.
\end{rem}

\sssec{}
Next we will introduce the \emph{Jets construction}, which is described in Appendix B.5 of \cite{GLC2}. For a prestack $Y \to X$, we can produce an $X_{dR}$-prestack
$$
\on{Jets}(Y) \to X_{dR}
$$
whose $S$-points are pairs consisting of $x: S \to X_{dR}$ and an $X$-map
$$
\hat{D}_{x} \to Y.
$$
This gives us a way to express $L^+Y$, the factorization arc space of $Y$, as
$$
L^+Y = L_{\nabla}^+\on{Jets}(Y)
$$
Furthermore, if $Y \to X$ is affine, then $\on{Jets}(Y) \to X_{dR}$ is also affine, and we can express $LY$, the factorization loop space of $Y$, as
$$
LY = L_{\nabla} \on{Jets}(Y).
$$

\sssec{}
Now we are ready to define $\on{LS}_{H}^{\on{mer}}$. To motivate the definition, we note that as group objects over $X_{dR}$,
$$
H \times X_{dR} \subset \on{Jets}(H \times X)
$$
and the quotient
$$
\on{Jets}(H \times X)/(H \times X_{dR}) = \on{Jets}(\mathfrak{h} \otimes \omega_{X})
$$
the latter which we will denote by $\on{Conn}(\mathfrak{h})$. Note that this is affine over $X_{dR}$ since $\mathfrak{h} \otimes \omega_X$ is affine (in fact, a vector bundle) over $X$. Then
$$
\on{pt}/H \times X_{dR} = (\on{pt} \times X_{dR})/(H \times X_{dR}) = \on{Conn}(\mathfrak{h})/\on{Jets}(H \times X)
$$
and thus
$$
\on{LS}_{H}^{\on{reg}} = L_{\nabla}^+(\on{pt}/H \times X) = L_{\nabla}^+\on{Jets}(\mathfrak{h} \otimes \omega_{X})/L_{\nabla}^+\on{Jets}(H \times X) = L_{\nabla}^+\on{Conn}(\mathfrak{h})/L^+H.
$$
Defining
$$
\on{LS}_{H}^{\on{mer}} = L_{\nabla}\on{Conn}(\mathfrak{h})/LH
$$
gives a map
$$
\on{LS}_H^{\on{reg}} \to \on{LS}_{H}^{\on{mer}}
$$
as desired.

\sssec{}
We will finish this subsection by giving the definition of the factorization $\on{Rep}(\check{G})$-module category $\on{QCoh}(\ncheck/\Bcheck)$. We could of course form the factorization space
$$
\on{LS}_{\Bcheck}^{\on{mer}} \underset{\on{LS}_{\Tcheck}^{\on{mer}}}{\times} \on{LS}_{\Tcheck}^{\on{reg}},
$$
but we want a version of this as a factorization module space over $\on{LS}_{\Gcheck}^{\on{reg}}$ at $x_0$. Denote by $\on{LS}_H(D_{\Ran_{x_0}}^{\times})$ the factorization space (defined in Definition 3.1.8 of \cite{Katia}) such that the lifts of $\underline{x}: S \to \on{Ran}_{x_0}$ to $\on{LS}_H(D_{\Ran_{x_0}}^{\times})$ are given by
$$
\{D_{\underline{x}, \nabla} \underset{X_{dR}}{\times} (X \setminus x_0)_{dR} \to \on{Conn}(\mathfrak{h})\}/\{D_{\underline{x}, \nabla} \underset{X_{dR}}{\times} (X \setminus x_0)_{dR} \to H\}.
$$

Since $D_{S \times x_0, \nabla} \subset D_{\underline{x}, \nabla}$, there is a map via restriction
$$
\on{LS}_H(D_{\Ran_{x_0}}^{\times}) \to \on{LS}_H(D_{x_0}^{\times})
$$
where the latter is the fiber of $\on{LS}_H^{\on{mer}}$ at $\{x_0\} \in \Ran_{x_0}$.

We define ${\ncheck/\Bcheck}_{\Ran_{x_0}}$ to be the factorization $\on{LS}_{\Gcheck}^{\on{reg}}$-module space
$$
\on{LS}_{\Gcheck}(D_{\Ran_{x_0}}^{\times}) \underset{\on{LS}_{\Gcheck}(D_{x_0}^{\times})}{\times} \on{LS}_{\Bcheck}(D_{x_0}^{\times}) \underset{\on{LS}_{\Tcheck}(D_{x_0}^{\times})}{\times} \on{LS}_{\Tcheck}(D_{x_0}).
$$

\begin{defn}
    The pair $(\on{Rep}(\Gcheck), \QCoh(\ncheck/\Bcheck)) \in \FactModCat$ consists of the factorization category $\on{Rep}(\Gcheck)$ and the factorization $\on{Rep}(\Gcheck)$-module category
    $$
        \QCoh(\ncheck/\Bcheck) = \QCoh(\ncheck/\Bcheck_{\Ran_{x_0}}).
    $$
\end{defn}

\begin{rem}
    The fiber of $\ncheck/\Bcheck_{\Ran_{x_0}}$ at $\{x_0\} \in \Ran_{x_0}$ is
    $$
        \on{LS}_{\Bcheck}(D^{\times}) \underset{\on{LS}_{\Tcheck}(D^{\times})}{\times} \on{LS}_{\Tcheck}(D)
    $$
    which identifies with the smooth, finite dimensional stack
    $$
    \tilde{\check{\mathcal{N}}}/\check{G} = \ncheck/\Bcheck
    $$
    and the fiber of the sheaf of categories $\IndCoh^*(\ncheck/\Bcheck_{\Ran_{x_0}})$ is thus the usual category
    $$
    \QCoh(\ncheck/\Bcheck)
    $$
    which is why we abusively denote the factorization module category using the same name.
\end{rem}

\subsection{Main Characters of B} \label{Main Characters of B}

\sssec{}
We will now recall the definition from Section 1.5 of \cite{GLC2} of the spherical Hecke category $\on{Sph}_G$ as a monoidal factorization category. We consider the factorization category
$$
\Dmodh(\on{Gr}_{G})
$$
and take $L^+G$-invariants
$$
\Dmodh(\on{Gr}_{G})^{L^+G}
$$
which can also be written as
$$
\Dmodh(L^+G \backslash LG / L^+G)
$$
This factorization category has a monoidal structure given by convolution. 

\sssec{}
We then perform a renormalization: consider all objects in $\Dmodh(L^+G \backslash LG / L^+G)$ whose $!$-pullback along
$$
LG / L^+G \to L^+G \backslash LG/L^+G
$$
is compact (equivalently, such that the $!$-pullback along $L^+G \backslash LG \to L^+G \backslash LG/L^+G$ is compact). Then

\begin{defn}
    The Spherical Hecke category $\on{Sph}_G \in \FactAlgCat$ is the Ind-completion of the category of such objects. It inherits a monoidal structure from the non-renormalized version.
\end{defn}

\begin{rem}
    If we want $\on{Sph}_G$ to act on $\on{Whit}^!(G)$, then we should instead take $\on{Sph}_{G, \rho(\omega_X)}$, where we treat $\rho(\omega_X)$ as a principal $G$-bundle via $T \to G$, and use the geometric twisting construction from Section 1.2 of \cite{GLC2} to form
    $$
    \Dmodh(\on{Gr}_{G, \rho(\omega_X)})^{L^+G_{\rho(\omega_X)}, ren}.
    $$
    However, according to Section 1.5.8 of \cite{GLC2}, we have a canonical identification
    $$
    \on{Sph}_G \simeq \on{Sph}_{G, \rho(\omega_X)}
    $$
    because of the general fact about the geometric twisting construction that
    $$
    Y/L^+H \simeq Y_{\mathcal{P}_H}/L^+H_{\mathcal{P}_H}.
    $$
\end{rem}

%%%Define%%%Aff_G%%%
\sssec{}
Next, we will give the definition of the affine Hecke category $\on{Aff}_G$ as a monoidal factorization $\on{Sph}_G$-module category. More precisely, the pair $(\on{Sph}_G, \on{Aff}_G)$ will have the structure of monoidal object in $\FactModCat$.

\sssec{}
We start by defining a factorization module space version of the Iwahori subgroup $I \subset L^+G$. Recall that $L^+G_{\Ran}$ has the structure of a factorization group: for a test affine scheme $S$ and an $S$-point $\underline{x}: S \to \Ran$, a lift of $\underline{x}$ to $L^+G_{\Ran}$ is given by a map
$$
\hat{D}_{\underline{x}} \to G 
$$
or equivalently (because $G$ is affine)
$$
D_{\underline{x}} \to G.
$$
Then the Iwahori $I_{\Ran_{x_0}}$ is the factorization $L^+G_{\Ran}$-module group defined as follows. For a test affine scheme $S$ and an $S$-point $\underline{x}: S \to \Ran_{x_0}$, a lift of $\underline{x}$ to $I_{\on{Ran}_{x_0}}$ is given by a map
$$
D_{\underline{x}} \to G
$$
such that $S \times \{x_0\} \subset D_{\underline{x}}$ lands in $B \subset G$.

\sssec{}
Then we take the factorization $\Dmodh(\on{Gr}_G)$-module category
$$
\Dmodh(\on{Fl}_G)
$$
and take $I$-invariants
$$
\Dmodh(\on{Fl}_G)^{I} = \Dmodh(I\backslash LG/I)
$$
which is a factorization $\Dmodh(\on{Gr}_G)^{L^+G}$-module category with monoidal structure given by convolution. Renormalizing in the same fashion as before, i.e. taking the subcategory of objects whose $!$-pullback along
$$
LG/I \to I\backslash LG/I
$$
is compact (equivalently, such that the $!$-pullback along $I\backslash LG \to I\backslash LG/I$ is compact) and Ind-completing, we obtain

\begin{defn}
    The pair $(\on{Sph}_G, \on{Aff}_G) \in \FactModCat$ consists of the factorization category $\on{Sph}_G$ and the factorization $\on{Sph}_G$-module category $\on{Aff}_G$ produced just above. The pair $(\on{Sph}_G, \on{Aff}_G)$ inherits a monoidal structure from the non-renormalized (in both components) version.
\end{defn}

\begin{rem}
Just as in the spherical situation, in order for $\on{Aff}_G$ to act on $\on{Whit}^!(\on{Fl}_G)$ (more precisely, for the monoidal pair $(\on{Sph}_G, \on{Aff}_G)$ to act on the pair $(\on{Whit}^!(G), \on{Whit}^!(\on{Fl}_G))$), we should have used the geometric twisting construction by $\rho(\omega_X)$. However, just as in the spherical situation, we have a canonical identification between the twisted and untwisted versions.
\end{rem}

\sssec{}
Again, we will move on to the spectral side. The spectral spherical Hecke category $\on{Sph}_{\Gcheck}^{\on{spec}}$ is defined in Appendix E.2 (and discussed at length in the rest of Appendix E) of \cite{GLC2} as follows.

\sssec{}
Consider the spectral spherical Hecke stack
$$
\on{Hecke}_{\Gcheck}^{\on{spec}} = \on{LS}_{\Gcheck}^{\on{reg}} \underset{\on{LS}_{\Gcheck}^{\on{mer}}}{\times} \on{LS}_{\Gcheck}^{\on{reg}}
$$
which is a factorization space because $\on{LS}_{\Gcheck}^{\on{reg}}$ and $\on{LS}_{\Gcheck}^{\on{mer}}$ are. Then

\begin{defn}
    The factorization category $\on{Sph}_{\Gcheck}^{\on{spec}} \in \FactAlgCat$ is
    $$
    \IndCoh^*(\on{LS}_{\Gcheck}^{\on{reg}} \underset{\on{LS}_{\Gcheck}^{\on{mer}}}{\times} \on{LS}_{\Gcheck}^{\on{reg}}).
    $$
    However, we emphasize that this is \emph{not} just the $\IndCoh^*$ construction from Appendix A.5 of \cite{GLC2} applied to $\on{Hecke}_{\Gcheck}^{\on{spec}}$. Indeed, $\IndCoh^*$ is not even defined on $\on{Hecke}_{\Gcheck}^{\on{spec}}$. Instead, there is an identification
    $$
    \on{Hecke}_{\Gcheck}^{\on{spec}} \simeq L_{\nabla}^+G \backslash L_{\nabla}G/L_{\nabla}^+G
    $$
    and the definition of the notation
    $$
    \IndCoh^*(\on{LS}_{\Gcheck}^{\on{reg}} \underset{\on{LS}_{\Gcheck}^{\on{mer}}}{\times} \on{LS}_{\Gcheck}^{\on{reg}})
    $$
    is 
    $$
    \IndCoh^*(L_{\nabla}G)_{L_{\nabla}^+G \times L_{\nabla}^+G}.
    $$
    
    (Note that $L_{\nabla}G$ is an ind-affine ind-scheme, so $\IndCoh^*$ is defined on it. This is analogous to how $\QCoh_{co}$ is defined on ind-affine ind-schemes - see Appendix A.2 of \cite{GLC2}.)
    
    It has a monoidal structure given by convolution, described in Appendix E.6 of \cite{GLC2}. It acts on $\on{Rep}(\Gcheck)$ as described in Appendix E.7 of the same source.
\end{defn}

%\begin{rem}
%    A priori, $\IndCoh$ is defined only on locally almost of finite type prestacks; $\IndCoh^*$ (and $\IndCoh^!$) extends the definition to infinite-type prestacks via Kan extension using *-push (resp. !-pull). See Appendix A.5 (resp. Appendix A.4) of \cite{GLC2} for the definition.
%\end{rem}

\sssec{}
%Finally, we will give the definition of the factorization $\on{Sph}_{\Gcheck}^{\on{spec}}$-module category $\on{Aff}_{\Gcheck}^{\on{spec}}$.
Note that the pullback
$$
(\on{LS}_{\Gcheck}^{\on{mer}})_{\Ran_{x_0}} = \on{LS}_{\Gcheck}^{\on{mer}} \underset{\Ran}{\times} \Ran_{x_0}
$$
is a factorization $\on{LS}_{\Gcheck}^{\on{mer}}$-module space. Since $\ncheck/\Bcheck_{\Ran_{x_0}}$ is a factorization $\on{LS}_{\Gcheck}^{\on{reg}}$-module space, it follows that 
$$
\ncheck/\Bcheck_{\Ran_{x_0}} \underset{(\on{LS}_{\Gcheck}^{\on{mer}})_{\Ran_{x_0}}}{\times} \ncheck/\Bcheck_{\Ran_{x_0}}
$$
is a factorization $\on{Hecke}_{\Gcheck}^{\on{spec}}$-module space.

\sssec{}
Again, $\IndCoh^*$ is not defined on $\ncheck/\Bcheck_{\Ran_{x_0}} \underset{(\on{LS}_{\Gcheck}^{\on{mer}})_{\Ran_{x_0}}}{\times} \ncheck/\Bcheck_{\Ran_{x_0}}$, so we must define what we will mean by the notation 
$$
\IndCoh^*(\ncheck/\Bcheck_{\Ran_{x_0}} \underset{(\on{LS}_{\Gcheck}^{\on{mer}})_{\Ran_{x_0}}}{\times} \ncheck/\Bcheck_{\Ran_{x_0}}).
$$
However, we do not know of a parallel definition for the affine case to taking $L_{\nabla}^+G$-bi-coinvariance as in the spherical case. We will thus defer the definition of $\on{Aff}_{\Gcheck}^{\on{spec}}$ to Subsection \ref{Conservativity}.
%Fortunately, in \cite{GLC2} Appendix E.5, it is shown that the definition of $\on{Sph}_{\Gcheck}^{\on{spec}}$ given above coincides with applying a renormalization procedure to $\QCoh_{\on{co}}(\on{Hecke}_{\Gcheck}^{\on{spec}})$. We will perform the analogous renormalization procedure to 
%$$
%\QCoh_{\on{co}}(\ncheck/\Bcheck_{\Ran_{x_0}} \underset{(\on{LS}_{\Gcheck}^{\on{mer}})_{\Ran_{x_0}}}{\times} \ncheck/\Bcheck_{\Ran_{x_0}}).
%$$

%\sssec{}
%By definition, we take 
%$$
%\IndCoh^*(\ncheck/\Bcheck_{\Ran_{x_0}} \underset{(\on{LS}_{\Gcheck}^{\on{mer}})_{\Ran_{x_0}}}{\times} \ncheck/\Bcheck_{\Ran_{x_0}})
%$$
%to mean the sheaf of categories on $\Ran_{x_0}$ such that for a test affine scheme $S \to \Ran_{x_0}$,
%$$
%\IndCoh^*(\ncheck/\Bcheck_{\Ran_{x_0}} \underset{(\on{LS}_{\Gcheck}^{\on{mer}})_{\Ran_{x_0}}}{\times} \ncheck/\Bcheck_{\Ran_{x_0}})_S
%$$
%is the ind-completion of the subcategory of 
%$$
%\QCoh_{\on{co}}(\ncheck/\Bcheck_{\Ran_{x_0}} \underset{(\on{LS}_{\Gcheck}^{\on{mer}})_{\Ran_{x_0}}}{\times} \ncheck/\Bcheck_{\Ran_{x_0}})_S^{>-\infty}
%$$
%generated under finite colimits by the essential image of
%$$
%i_{*} : \QCoh(\ncheck/\Bcheck_{\Ran_{x_0}})_S \to \QCoh_{\on{co}}(\ncheck/\Bcheck_{\Ran_{x_0}} \underset{(\on{LS}_{\Gcheck}^{\on{mer}})_{\Ran_{x_0}}}{\times} \ncheck/\Bcheck_{\Ran_{x_0}})_S.
%$$

\begin{defn}
    The pair $(\on{Sph}_{\Gcheck}^{\on{spec}}, \on{Aff}_{\Gcheck}^{\on{spec}}) \in \FactModCat$ consists of the factorization category $\on{Sph}_{\Gcheck}^{\on{spec}} \in \FactAlgCat$ and the factorization $\on{Sph}_{\Gcheck}^{\on{spec}}$-module category
    $$
    \on{Aff}_{\Gcheck}^{\on{spec}} = \IndCoh^*(\ncheck/\Bcheck_{\Ran_{x_0}} \underset{(\on{LS}_{\Gcheck}^{\on{mer}})_{\Ran_{x_0}}}{\times} \ncheck/\Bcheck_{\Ran_{x_0}}).
    $$
    It has the structure of monoidal pair and action on $(\on{Rep}(\Gcheck), \QCoh(\ncheck/\Bcheck))$ defined analogously to the spherical situation.
\end{defn}

\subsection{Main Characters of IFLE} \label{Main Characters of IFLE}

\sssec{}
We begin this subsection by recalling the definition of the Kazhdan-Lusztig category as a factorization category from Section 2.1 of \cite{GLC2}.

\sssec{}
The factorization category of Kac-Moody modules, $\gmod_{\kappa}$, has a (strong) action of the factorization group $LG$ at level $\kappa$. For the construction of this factorization category and action, see Appendix B.14 of \cite{GLC2}. Thus, it makes sense to define

\begin{defn}
    The factorization category $\on{KL}(G)_{\kappa} \in \FactAlgCat$ is
    $$
        \gmod_{\kappa}^{L^+G}.
    $$
\end{defn}

\sssec{}
We can restrict the sheaf of categories $\gmod_{\kappa}$ along $\Ran_{x_0} \to \Ran$ to obtain $\gmod_{\kappa, \Ran_{x_0}}$. This is a factorization $\gmod_{\kappa}$-module category for tautological reasons. Furthermore, also for tautological reasons, there is an action of $LG_{\Ran_{x_0}}$ on $\gmod_{\kappa, \Ran_{x_0}}$. Thus we can take invariance for the factorization $L^+G$-module group $I_{\Ran_{x_0}}$ and obtain a factorization $\on{KL}(G)_{\kappa}$-module category that we will denote $\on{IKL}(G)_{\kappa}$.

\begin{defn}
    The pair $(\on{KL}(G)_{\kappa}, \on{IKL}(G)_{\kappa}) \in \FactModCat$ consists of the factorization category $\on{KL}(G)_{\kappa}$ and the factorization $\on{KL}(G)_{\kappa}$-module category
    $$
    \on{IKL}(G)_{\kappa} = \gmod_{\kappa, \Ran_{x_0}}^{I_{\Ran_{x_0}}}.
    $$
\end{defn}

\sssec{}
In the rest of the subsection and the paper, we will only be interested in the case $\kappa = \kappa_{\on{crit}}$.

\sssec{}
If $\mathcal{C}$ is a sheaf of categories on $\Ran$ with an action of $LG$ at level $\kappa_{\on{crit}}$, there is a monoidal action
$$
\on{Sph}_G \otimes \mathcal{C}^{L^+G} \longrightarrow \mathcal{C}^{L^+G}.
$$
Furthermore, if $\mathcal{D}$ is a sheaf of categories on $\Ran_{x_0}$ with an action of $LG$ at level $\kappa_{\on{crit}}$, there is a monoidal action
$$
\on{Aff}_G \otimes \mathcal{C}^{I} \longrightarrow \mathcal{C}^{I}.
$$
This allows us to consider the pair $(\on{KL}(G)_{\on{crit}}, \on{IKL}(G)_{\on{crit}})$ as having a monoidal action of the monoidal pair $(\on{Sph}_G, \on{Aff}_G)$.

\sssec{}
Now we will move on to the spectral counterparts. There is an affine $D$-scheme of $\Gcheck$-opers $\on{Op}_{\Gcheck}$ whose definition is provided in Section 3.1.3 of \cite{GLC2}. Then the horizontal arc space and horizontal loop space constructions allow us to define the factorization space of regular opers
$$
\on{Op}_{\Gcheck}^{\on{reg}} = L_{\nabla}^+\on{Op}_{\Gcheck}
$$
and the factorization space of meromorphic opers
$$
\on{Op}_{\Gcheck}^{\on{mer}} = L_{\nabla}\on{Op}_{\Gcheck}.
$$
There is a diagram
$$
\begin{tikzcd}
	{\on{Op}_{\Gcheck}^{\on{reg}}} && {\on{Op}_{\Gcheck}^{\on{mer}}} \\
	\\
	{\on{LS}_{\Gcheck}^{\on{reg}}} && {\on{LS}_{\Gcheck}^{\on{mer}}}
	\arrow[from=1-1, to=1-3]
	\arrow[from=1-1, to=3-1]
	\arrow[from=1-3, to=3-3]
	\arrow[from=3-1, to=3-3]
\end{tikzcd}
$$
which allows us to form the factorization space of monodromy-free opers
$$
\on{Op}_{\Gcheck}^{\on{mon-free}} = \on{Op}_{\Gcheck}^{\on{mer}} \underset{\on{LS}_{\Gcheck}^{\on{mer}}}{\times} \on{LS}_{\Gcheck}^{\on{reg}}.
$$

\begin{defn}
    The factorization category of Ind-coherent sheaves on monodromy-free opers is 
    $$
    \IndCoh^*(\on{Op}_{\Gcheck}^{\on{mon-free}}) \in \FactAlgCat.
    $$
    It has an action of $\on{Sph}_{\Gcheck}^{\on{spec}} \in \on{Alg}(\FactAlgCat)$ by convolution (the factorizable construction of this action is given in Appendix E.8 of \cite{GLC2}).
\end{defn}

\begin{rem}
For a more thorough discussion of monodromy-free opers, see Section 3 of \cite{GLC2}.
\end{rem}

\sssec{}
We can also take the pullback of $\on{Op}_{\Gcheck}^{\on{mer}}$ along $\Ran_{x_0} \to \Ran$ to obtain the factorization $\on{Op}_{\Gcheck}^{\on{mer}}$-module space
$$
\on{Op}_{\Gcheck, \on{Ran}_{x_0}}^{\on{mer}}.
$$
Then
$$
\on{Op}_{\Gcheck, \on{Ran}_{x_0}}^{\on{mer}} \underset{\on{LS}_{\Gcheck, \on{Ran}_{x_0}}^{\on{mer}}}{\times} \ncheck/\Bcheck_{\Ran_{x_0}}
$$
has the natural structure of factorization $\on{Op}_{\Gcheck}^{\on{mon-free}}$-module space.

This endows $\on{IndCoh}^*(\on{Op}_{\Gcheck, \on{Ran}_{x_0}}^{\on{mer}} \underset{\on{LS}_{\Gcheck, \on{Ran}_{x_0}}^{\on{mer}}}{\times} \ncheck/\Bcheck_{\Ran_{x_0}})$ with the structure of \emph{weak} factorization $\IndCoh^*(\on{Op}_{\Gcheck}^{\on{mon-free}})$-module category. However, since 
$$
\ncheck/\Bcheck_{\Ran_{x_0}} \longrightarrow \on{LS}_{\Gcheck, \on{Ran}_{x_0}}^{\on{mer}}
$$
is ind-proper, we have that
$$
\on{Op}_{\Gcheck, \on{Ran}_{x_0}}^{\on{mer}} \underset{\on{LS}_{\Gcheck, \on{Ran}_{x_0}}^{\on{mer}}}{\times} \ncheck/\Bcheck_{\Ran_{x_0}}
$$
is an ind-placid ind-scheme relative to $\Ran_{x_0}$, so the weak factorization module structure is in fact a factorization module structure (see Appendix B.13 of \cite{GLC2} for details).

\begin{defn}
    The pair $(\IndCoh^*(\on{Op}_{\Gcheck}^{\on{mon-free}}), \IndCoh^*(\on{Op}_{\Gcheck}^{\on{mer}} \underset{\on{LS}_{\Gcheck}^{\on{mer}}}{\times} \ncheck/\Bcheck)) \in \FactModCat$ consists of the factorization category $\IndCoh^*(\on{Op}_{\Gcheck}^{\on{mon-free}})$ and the factorization $\IndCoh^*(\on{Op}_{\Gcheck}^{\on{mon-free}})$-module category
    $$
    \IndCoh^*(\on{Op}_{\Gcheck}^{\on{mer}} \underset{\on{LS}_{\Gcheck}^{\on{mer}}}{\times} \ncheck/\Bcheck) = \IndCoh^*(\on{Op}_{\Gcheck, \Ran_{x_0}}^{\on{mer}} \underset{\on{LS}_{\Gcheck, \Ran_{x_0}}^{\on{mer}}}{\times} \ncheck/\Bcheck_{\Ran_{x_0}}).
    $$
    This pair has an action of the monoidal pair $(\on{Sph}_{\Gcheck}^{\on{spec}}, \on{Aff}_{\Gcheck}^{\on{spec}})$.
\end{defn}

\subsection{Approximation} \label{Approximation}
In this subsection, we will state and prove an approximation result, which will be used in later sections. The result is an extension of Corollary 4.5.7 of \cite{GLC2} from factorization categories to pairs (of a factorization category $\mathcal{C}$ and a factorization $\mathcal{C}$-module category $\mathcal{M}$ (at $x_0$)).

\sssec{}
We will begin by restating the setting of Section 4.5 in loc. cit.

Let $\mathcal{Y}$ be an affine $D$-scheme over $X$, and let $\mathcal{T}^+$ denote the factorization space $L_{\nabla}^+\mathcal{Y}$. If $\mathcal{T}$ is a factorization space that is an ind-placid ind-scheme, and
$$
\iota : \mathcal{T}^+ \to \mathcal{T}
$$
is a map of factorization spaces that extends to a unital-in-correspondences structure relative to $\mathcal{T}^+$ (as defined in Appendix C.10.6 of \cite{GLC2}), then
$$
\iota_* \mathcal{O}_{\mathcal{T}^+}
$$
is the factorization unit of the factorization category
$$
\QCoh_{\on{co}}(\mathcal{T}).
$$

\sssec{}
Let $\mathcal{Y}_0$ be a $D$-prestack such that $\mathcal{T}_0^+ = L_{\nabla}^+ \mathcal{Y}_0$ has an affine diagonal. Furthermore, let 
$$
f : \mathcal{Y} \to \mathcal{Y}_0
$$
be a map over $X_{dR}$ such that
$$
L^+f : \mathcal{T}^+ = L_{\nabla}^+ \mathcal{Y} \to L_{\nabla}^+ \mathcal{Y}_0 = \mathcal{T}_0^+
$$
extends to a map
$$
Lf : \mathcal{T} \to \mathcal{T}_0^+.
$$

\sssec{}
There is a lax unital functor of factorization categories
$$
\QCoh_{\on{co}}(\mathcal{T}) \overset{(Lf)_*}{\to} \QCoh_{\on{co}}(\mathcal{T}_0^+) \to \QCoh(\mathcal{T}_0^+)
$$
that sends the factorization unit $\iota_* \mathcal{O}_{\mathcal{T}^+}$ to the factorization
$$
(L^+f)_*\mathcal{O}_{\mathcal{T}^+}
$$
in $\QCoh(\mathcal{T}_0^+)$. Thus, the above functor factors through the unital factorization functor
$$
(Lf)_*^{\on{enh}} : \QCoh_{\on{co}}(\mathcal{T}) \to (L^+f)_*\mathcal{O}_{\mathcal{T}^+}\on{-FactMod} (\QCoh(\mathcal{T}_0^+)).
$$

Now we will recall the statement of Corollary 4.5.7 of loc. cit.

\begin{prop} \label{alg approximation} \cite{GLC2}
    Suppose that there is a map from an affine $D$-scheme
    $$
    \tilde{\mathcal{Y}}_0 \to \mathcal{Y}_0
    $$
    inducing an fpqc cover
    $$
    \tilde{\mathcal{T}}_0^+ = L_{\nabla}^+\tilde{\mathcal{Y}}_0 \to L_{\nabla}^+ \mathcal{Y}_0 = \mathcal{T}_0^+.
    $$
    Furthermore, suppose that there is an identification
    $$
    L_{\nabla}(\mathcal{Y} \underset{\mathcal{Y}_0}{\times} \tilde{\mathcal{Y}}_0) \underset{L_{\nabla}(\tilde{\mathcal{Y}}_0)}{\times} L_{\nabla}^+(\tilde{\mathcal{Y}}_0) \simeq \mathcal{T} \underset{\mathcal{T}_0^+}{\times} \tilde{\mathcal{T}}_0^+
    $$
    under which the base change of $\iota : \mathcal{T}^+ \to \mathcal{T}$ along the above fpqc cover,
    $$
    \mathcal{T}^+ \underset{\mathcal{T}_0^+}{\times} \tilde{\mathcal{T}}_0^+ \to \mathcal{T} \underset{\mathcal{T}_0^+}{\times} \tilde{\mathcal{T}}_0^+
    $$
    coincides with the map
    $$
    \mathcal{T}^+ \underset{\mathcal{T}_0^+}{\times} \tilde{\mathcal{T}}_0^+ = L_{\nabla}^+(\mathcal{Y} \underset{\mathcal{Y_0}}{\times} \tilde{\mathcal{Y}}_0) \to L_{\nabla}(\mathcal{Y} \underset{\mathcal{Y_0}}{\times} \tilde{\mathcal{Y}}_0) \underset{L_{\nabla}(\tilde{\mathcal{Y}}_0)}{\times} L_{\nabla}^+(\tilde{\mathcal{Y}}_0).
    $$
    Finally, suppose that the projection 
    $$
    \mathcal{Y} \underset{\mathcal{Y_0}}{\times} \tilde{\mathcal{Y}}_0 \to \tilde{\mathcal{Y}}_0
    $$
    is $D$-afp.
    Then the factorization functor
    $$
    (Lf)_*^{\on{enh}} : \QCoh_{\on{co}}(\mathcal{T}) \to (L^+f)_*\mathcal{O}_{\mathcal{T}^+}\on{-FactMod} (\QCoh(\mathcal{T}_0^+))
    $$
    induces an equivalence on the eventually coconnective subcategories
    $$
    \QCoh_{\on{co}}(\mathcal{T})^{>-\infty} \simeq [(L^+f)_*\mathcal{O}_{\mathcal{T}^+}\on{-FactMod} (\QCoh(\mathcal{T}_0^+))]^{>-\infty}.
    $$
\end{prop}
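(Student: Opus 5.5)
The statement just above is Corollary 4.5.7 of \cite{GLC2}, which the paper quotes rather than proves. Still, here is how I would argue it, since the same strategy will be reused for pairs in Theorem \ref{approximation}.

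\emph{Step 1: reduce to $\tilde{\mathcal{T}}_0^+$.} Both sides satisfy fpqc descent along $\tilde{\mathcal{T}}_0^+ \to \mathcal{T}_0^+$, compatibly with $(Lf)_*^{\on{enh}}$:
\begin{itemize}
\item on the source, $\QCoh_{\on{co}}(\mathcal{T})$ is a module over $\QCoh(\mathcal{T}_0^+)$ via $Lf$;
\item on the target, factorization modules for $(L^+f)_*\mathcal{O}_{\mathcal{T}^+}$ are computed fpqc-locally on $\mathcal{T}_0^+$.
\end{itemize}
The eventually coconnective parts are compatible with this descent, because the cover is flat. Since $\mathcal{T}_0^+$ has affine diagonal, the Čech nerve of the cover consists of affine schemes (or ind-placid objects). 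Hence it suffices to prove the equivalence after base change to each term of the Čech nerve, and by the same argument to $\tilde{\mathcal{T}}_0^+$ itself.

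\emph{Step 2: the affine case.} After base change we have:
\begin{itemize}
\item $\mathcal{Y}' = \mathcal{Y} \times_{\mathcal{Y}_0} \tilde{\mathcal{Y}}_0$ is $D$-afp over the affine $D$-scheme $\tilde{\mathcal{Y}}_0$;
\item by the assumed identification, $\mathcal{T}' := \mathcal{T} \times_{\mathcal{T}_0^+} \tilde{\mathcal{T}}_0^+ = L_\nabla(\mathcal{Y}') \times_{L_\nabla \tilde{\mathcal{Y}}_0} L^+_\nabla \tilde{\mathcal{Y}}_0$, with the arcs embedded in the standard way.
\end{itemize}
So we are reduced to the "relative loop space over arcs of the base" situation. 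In that situation the functor
$$
\QCoh_{\on{co}}(\mathcal{T}') \to (L^+f)_*\mathcal{O}\on{-FactMod}(\QCoh(\tilde{\mathcal{T}}_0^+))
$$
is the factorization analogue of the classical statement: quasi-coherent sheaves on $\mathcal{T}'$ that are bounded below are the same as modules over the pushed-forward structure sheaf of the arcs, with factorization remembering the "poles". I would prove this in three parts.
\begin{itemize}
\item \textbf{Fully faithful.} Compute $\on{Hom}$ out of $\iota_*\mathcal{O}$ and its factorization translates. These generate $\QCoh_{\on{co}}(\mathcal{T}')^{>-\infty}$ under colimits and truncations, by ind-placidity of $\mathcal{T}'$ and the $D$-afp hypothesis, which makes $\mathcal{T}'$ an ind-(finitely presented) thickening of $\mathcal{T}'^{+}$ locally.
\item \textbf{Hom identification.} Use unitality-in-correspondences to identify these Homs with Homs of factorization modules.
\item \textbf{Essentially surjective.} Show that every eventually coconnective factorization module is a colimit of such objects, using the standard resolution of a factorization module by induced modules.
\end{itemize}

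\emph{Main obstacle.} I expect the hard part to be essential surjectivity together with the boundedness bookkeeping. One must show that $\iota_*$ of arcs and its factorization-induced modules generate the eventually coconnective part, and that pushforward to $\tilde{\mathcal{T}}_0^+$ is t-exact enough to preserve and reflect coconnectivity. This is exactly where the $D$-afp hypothesis and ind-placidity are consumed. The fpqc descent step is formal once the t-structure compatibility is checked.
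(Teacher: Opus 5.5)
Your Step 1 is the same reduction the paper uses when it extends this statement to pairs in the proof of Theorem \ref{approximation}. You take the \v{C}ech nerve of $\tilde{\mathcal{T}}_0^+ \to \mathcal{T}_0^+$ and write both sides as totalizations. Flat pullback is t-exact, so the eventually coconnective parts are compatible with the totalization. The higher \v{C}ech terms have to be handled by flat affine base change from the first one, because the loop-space identification is only assumed over $\tilde{\mathcal{T}}_0^+$; this is the paper's ``and pullbacks thereof''. However, the paper does not prove Proposition \ref{alg approximation} itself --- it imports it from \cite{GLC2} --- and in Theorem \ref{approximation} it treats the trivial-group case as known. So all of the content sits in your Step 2, and there the proposal has a genuine gap.

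Your fully-faithfulness argument rests on the claim that $\iota_*\mathcal{O}$ and its ``factorization translates'' generate $\QCoh_{\on{co}}(\mathcal{T}')^{>-\infty}$ under colimits and truncations, because $\mathcal{T}'$ is locally an ind-finitely-presented thickening of $\mathcal{T}'^{+}$. That justification is false, and generation already fails on the fiber at a point.

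Take $\mathcal{Y}_0 = X_{dR}$ and $\mathcal{Y} = \on{Jets}(\mathbb{A}^1 \times X)$. The fiber of $\iota$ at $x$ is $\mathbb{A}^1[[t]] \subset \mathbb{A}^1((t))$, a closed embedding of infinite codimension, not a thickening. Everything generated by $\iota_*\mathcal{O}$ under colimits and truncations is set-theoretically supported on $\mathbb{A}^1[[t]]$. But let $U = t^{-1}\mathbb{A}^1[[t]] \setminus \mathbb{A}^1[[t]]$; the pushforward of $j_*\mathcal{O}_U$ lies in the heart and is right orthogonal to all such objects.

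In the case the paper actually needs ($\mathcal{Y} = \on{Op}_{\Gcheck}$, $\mathcal{Y}_0 = \on{pt}/\Gcheck \times X_{dR}$), the fiber of $\mathcal{T}$ at $x$ is $\on{Op}^{\on{mon-free}}_{\Gcheck, x}$. Its reduced ind-scheme is the disjoint union of the $\on{Op}^{\check{\lambda}-\on{reg}}_{\Gcheck, x}$ over dominant $\check{\lambda}$. $\iota_*\mathcal{O}$ only sees the component $\check{\lambda} = 0$, and the same persists after your base change to $\tilde{\mathcal{T}}_0^+$.

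The poles are encoded precisely in the factorization-module structure along collisions with moving points. Your sketch gives no mechanism converting that structure into quasi-coherent data on the horizontal loop space. The essential-surjectivity step is likewise unsupported: a resolution by induced modules requires knowing that induced factorization modules exist and lie in the eventually coconnective essential image of $(Lf)_*^{\on{enh}}$, which is essentially the theorem itself.

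The missing ingredient is the affine-base theorem of \cite{GLC2}. It identifies eventually coconnective $\QCoh_{\on{co}}$ of horizontal loops (relative to horizontal arcs of the base) with factorization modules over $\mathcal{O}$ of horizontal arcs. You must either cite it, as the paper effectively does, or actually prove it.
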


\sssec{}
We now detail our modification of the paradigm described above. Suppose the following strengthening of the assumptions of Corollary 4.5.7 in \cite{GLC2} holds:
$$
\mathcal{T} = L_{\nabla}\mathcal{Y} \underset{L_{\nabla}\mathcal{Y}_0}{\times} L_{\nabla}^+\mathcal{Y}_0.
$$

Let $\mathcal{S}_0$ be a factorization $\mathcal{T}_0$-module space equipped with a map
$$
\mathcal{S}_0 \to (L_{\nabla}\mathcal{Y}_0)_{\Ran_{x_0}}.
$$
Then
$$
(L_{\nabla}\mathcal{Y})_{\Ran_{x_0}} \underset{(L_{\nabla}\mathcal{Y}_0)_{\Ran_{x_0}}}{\times} \mathcal{S}_0
$$
is a factorization $\mathcal{T}$-module space, and we have a morphism of pairs
$$
(\QCoh_{\on{co}}(L_{\nabla}\mathcal{Y} \underset{L_{\nabla}\mathcal{Y}_0}{\times} L_{\nabla}^+\mathcal{Y}_0), \QCoh_{\on{co}}((L_{\nabla}\mathcal{Y})_{\Ran_{x_0}} \underset{(L_{\nabla}\mathcal{Y}_0)_{\Ran_{x_0}}}{\times} \mathcal{S}_0)) \to (\QCoh(L_{\nabla}^+\mathcal{Y}_0), \QCoh(\mathcal{S}_0))
$$
which enhances to
\begin{align}
&(\QCoh_{\on{co}}(L_{\nabla}\mathcal{Y} \underset{L_{\nabla}\mathcal{Y}_0}{\times} L_{\nabla}^+\mathcal{Y}_0), \QCoh_{\on{co}}((L_{\nabla}\mathcal{Y})_{\Ran_{x_0}} \underset{(L_{\nabla}\mathcal{Y}_0)_{\Ran_{x_0}}}{\times} \mathcal{S}_0)) \to \\ &(L^+f)_*\mathcal{O}_{\mathcal{T}^+}\on{-FactMod}(\QCoh(L_{\nabla}^+\mathcal{Y}_0), \QCoh(\mathcal{S}_0))
\end{align}

We claim the following
\begin{thm} \label{approximation}
    Suppose that $\mathcal{Y}_0 = \on{pt}/H \times X_{dR}$ for an affine algebraic group $H$. Then 
    \begin{align}
    &(\QCoh_{\on{co}}(L_{\nabla}\mathcal{Y} \underset{L_{\nabla}\mathcal{Y}_0}{\times} L_{\nabla}^+\mathcal{Y}_0), \QCoh_{\on{co}}((L_{\nabla}\mathcal{Y})_{\Ran_{x_0}} \underset{(L_{\nabla}\mathcal{Y}_0)_{\Ran_{x_0}}}{\times} \mathcal{S}_0)) \to \\ &(L^+f)_*\mathcal{O}_{\mathcal{T}^+}\on{-FactMod}(\QCoh(L_{\nabla}^+\mathcal{Y}_0), \QCoh(\mathcal{S}_0))
    \end{align}
    induces an equivalence on the eventually coconnective subcategories for each component.
\end{thm}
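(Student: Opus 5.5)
The first component is Proposition \ref{alg approximation} applied to $\mathcal{Y}_0 = \on{pt}/H \times X_{dR}$, with the strengthened hypothesis $\mathcal{T} = L_{\nabla}\mathcal{Y} \times_{L_{\nabla}\mathcal{Y}_0} L_{\nabla}^+\mathcal{Y}_0$. So I will first check its hypotheses. Take $\tilde{\mathcal{Y}}_0 = X_{dR}$ with the tautological map to $\on{pt}/H \times X_{dR}$. Then $\tilde{\mathcal{T}}_0^+ = \Ran$, and $\Ran \to \on{LS}_H^{\on{reg}} = \Ran/L^+H$ is an fpqc cover, since $L^+H$ is a pro-smooth affine group scheme over $\Ran$. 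The base change $\mathcal{Y}\times_{\mathcal{Y}_0}\tilde{\mathcal{Y}}_0$ is the $H$-torsor $\tilde{\mathcal{Y}}$ over $\mathcal{Y}$, which is affine over $X_{dR}$ and $D$-afp over $X_{dR}$ (assuming $\mathcal{Y}$ is). Since $L_{\nabla}\tilde{\mathcal{Y}}_0 = L_{\nabla}^+\tilde{\mathcal{Y}}_0 = \Ran$, the required identification reads $L_{\nabla}\tilde{\mathcal{Y}} \simeq \mathcal{T}\times_{\on{LS}^{\on{reg}}_H}\Ran$. This follows from the fiber product description of $\mathcal{T}$ and $L_{\nabla}\tilde{\mathcal{Y}} = L_{\nabla}\mathcal{Y}\times_{\on{LS}^{\on{mer}}_H}\Ran$ (a trivialization of the $H$-torsor on $D^\times$). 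So the first component holds.

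For the second component, I will run the same argument in module form. First, reduce by fpqc descent along the cover $\tilde{\mathcal{S}}_0 := \mathcal{S}_0 \times_{(L_\nabla\mathcal{Y}_0)_{\Ran_{x_0}}} (\Ran_{x_0}) \to \mathcal{S}_0$, using the unit-type map $\Ran_{x_0}\to (L_\nabla \mathcal{Y}_0)_{\Ran_{x_0}}$. If this is not surjective, I will instead use the factorization-module version of the cover $LH_{\Ran_{x_0}}$-torsor $\mathcal{S}_0\times_{L_\nabla\mathcal{Y}_0}\Ran_{x_0}$, which is a cover because $\on{LS}^{\on{mer}}_H = L_\nabla\on{Conn}(\mathfrak{h})/LH$. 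Both sides of the functor are compatible with this base change. On the source, $\QCoh_{\on{co}}$ of the pulled-back space is $\QCoh_{\on{co}}((L_\nabla\tilde{\mathcal{Y}})_{\Ran_{x_0}}\times \tilde{\mathcal{S}}_0$-type$)$. On the target, factorization modules over $(L^+f)_*\mathcal{O}$ pulled back to the cover become factorization modules over $\mathcal{O}_{L^+_\nabla\tilde{\mathcal{Y}}}$ relative to $\Ran$. Checking that descent commutes with the formation of factorization modules is formal, because the enhanced functor is $\QCoh(\mathcal{S}_0)$-linear in the appropriate sense.

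After the base change we are in the "group-free" situation. The claim becomes that $\QCoh_{\on{co}}(\mathcal{M})^{>-\infty} \simeq \mathcal{O}_{L^+_\nabla\tilde{\mathcal Y}}\on{-FactMod}(\QCoh(\tilde{\mathcal{S}}_0))^{>-\infty}$, where $\mathcal{M} = (L_\nabla\tilde{\mathcal{Y}})_{\Ran_{x_0}}\times_{\Ran_{x_0}}\tilde{\mathcal{S}}_0$. I will deduce this from the module analogue of the key input behind Proposition \ref{alg approximation}, namely Section 4.5 of \cite{GLC2}. That input identifies eventually coconnective factorization modules over the commutative factorization algebra $\mathcal{O}_{L_\nabla^+\tilde{\mathcal{Y}}}$ with $\QCoh_{\on{co}}$ of the "chiral spectrum" $L_\nabla\tilde{\mathcal{Y}}$, using that $\tilde{\mathcal{Y}}$ is $D$-afp, so that $L_\nabla\tilde{\mathcal Y}$ is a colimit of Beilinson–Drinfeld Grassmannian-type schemes. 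For a point $\underline{x}\ni x_0$, the argument of loc. cit. is local on the disk $D_{\underline{x}}$, and factorization modules at $\Ran_{x_0}$ are parametrized exactly by $D^\times$-data near $\underline{x}$. The $\tilde{\mathcal{S}}_0$ factor is an inert parameter base-changed along, since the module structure only involves $\mathcal{T}$. So the pointwise (over $S\to\Ran_{x_0}$) proof transfers verbatim, tensored with $\QCoh(\tilde{\mathcal{S}}_0)_S$.

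The main obstacle is this last step. The eventually coconnective truncation does not obviously commute with tensoring by $\QCoh(\tilde{\mathcal S}_0)$ or with the descent limit. I plan to handle it by noting that all pullbacks along the flat cover are t-exact, so ${}^{>-\infty}$ is preserved under the descent limit. I will also check that $\tilde{\mathcal{S}}_0$ is such that $\QCoh(\tilde{\mathcal{S}}_0)$ is dualizable with t-exact Künneth over $\QCoh_{\on{co}}$ of ind-placid schemes, as in Appendix B.13 of \cite{GLC2}.
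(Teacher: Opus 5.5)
\textbf{Verdict: genuine gap, in the descent step you share with the paper.} Your overall route is the paper's. For the first component you invoke Proposition \ref{alg approximation}, and your check of its hypotheses with $\tilde{\mathcal{Y}}_0 = X_{dR}$ is fine. For the second component you form $\tilde{\mathcal{S}}_0=\Ran_{x_0}\times_{(L_\nabla\mathcal{Y}_0)_{\Ran_{x_0}}}\mathcal{S}_0$, descend along its \v{C}ech nerve, and reduce to $H=\{e\}$. The paper calls that case obvious; you sketch it via the module analogue of \cite{GLC2}, Section 4.5.

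\textbf{The descent step fails.} It requires $\tilde{\mathcal{S}}_0\to\mathcal{S}_0$ to be a cover, and it is not. Here $(L_\nabla\mathcal{Y}_0)_{\Ran_{x_0}}=(\on{LS}^{\on{mer}}_H)_{\Ran_{x_0}}$ is the \emph{meromorphic} stack, and the map from $\Ran_{x_0}$ is the trivial local system. So $\tilde{\mathcal{S}}_0$ only lives over the locus of $\mathcal{S}_0$ where the underlying $H$-local system on $D^\times_{\underline{x}}$ is trivial.

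\textbf{Counterexample at $x_0$.} Take $\mathcal{S}_0=\ncheck/\Bcheck_{\Ran_{x_0}}$, which is exactly the case used for $\on{B}$ and $\on{IFLE}$. In a faithful representation $V$, the connection $d+N\,dt/t$ has horizontal sections $\ker N\subsetneq V$ when $N\neq 0$. Gauge equivalence over $k((t))$ preserves the space of horizontal sections, so this connection is not gauge-trivial. Hence the cover misses every point with $N\neq0$, and skyscrapers there pull back to zero, so $\QCoh(\mathcal{S}_0)\to\on{Tot}\,\QCoh(\tilde{\mathcal{S}}_0^\bullet)$ is not conservative.

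\textbf{Why the algebra case is different.} In Proposition \ref{alg approximation} the cover is $\Ran\to L^+_\nabla\mathcal{Y}_0=\on{LS}^{\on{reg}}_H$. That works because regular local systems on the formal disk are trivial. The module component maps to the meromorphic stack instead, where no such statement holds.

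\textbf{Your fallback does not repair this.}
\begin{itemize}
\item As written, $\mathcal{S}_0\times_{L_\nabla\mathcal{Y}_0}\Ran_{x_0}$ is the same space as $\tilde{\mathcal{S}}_0$. It is not an $LH$-torsor over $\mathcal{S}_0$.
\item If you mean $\mathcal{S}_0\times_{\on{LS}^{\on{mer}}_H}(L_\nabla\on{Conn}(\mathfrak{h}))_{\Ran_{x_0}}$, that map is surjective. However, its factorization module structure is over $\on{LS}^{\on{reg}}_H\times_{\on{LS}^{\on{mer}}_H}L_\nabla\on{Conn}(\mathfrak{h})$. This space still carries the $LH/L^+H$ (affine Grassmannian) directions, so it is neither $\Ran$ nor $L^+_\nabla\on{Conn}(\mathfrak{h})$.
\item Consequently you land neither in the group-free situation nor in any situation covered by Proposition \ref{alg approximation}. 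Your remark that descent commutes with forming factorization modules then has nothing to apply to.
\end{itemize}

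\textbf{What is missing.} You need one of two things:
\begin{itemize}
\item a cover of $\mathcal{S}_0$ that is compatible with the factorization-module structure and surjective over $x_0$, which therefore must not trivialize the local system at $x_0$; or
\item an argument that uses the fpqc cover $\Ran\to\on{LS}^{\on{reg}}_H$ only away from $x_0$ and handles the fiber at $x_0$ separately.
\end{itemize}
The paper's written proof performs exactly this \v{C}ech reduction, so the same objection applies to it.
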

\begin{proof}
    In the case $H = \{e\}$, the claim is obvious. In general, we take $\tilde{\mathcal{Y}}_0 = \on{pt} \times X_{dR}$ and write
    $$
    \tilde{\mathcal{S}}_0 = (L_{\nabla}\tilde{\mathcal{Y}}_0)_{\Ran_{x_0}} \underset{(L_{\nabla}\mathcal{Y}_0)_{\Ran_{x_0}}}{\times} \mathcal{S}_0.
    $$
    Furthermore, denote by
    $$
    \tilde{\mathcal{S}}_0^{\bullet}
    $$
    the Cech nerve of $\tilde{\mathcal{S}}_0 \to \mathcal{S}_0$.
    In the notation of loc. cit. we have
    $$
    \QCoh_{\on{co}}((L_{\nabla}\mathcal{Y})_{\Ran_{x_0}} \underset{(L_{\nabla}\mathcal{Y}_0)_{\Ran_{x_0}}}{\times} \mathcal{S}_0) \simeq \on{Tot}(\QCoh_{\on{co}}((L_{\nabla}\tilde{\mathcal{Y}}^{\bullet})_{\Ran_{x_0}} \underset{(L_{\nabla}\tilde{\mathcal{Y}}_0^{\bullet})_{\Ran_{x_0}}}{\times} \tilde{\mathcal{S}}_0^{\bullet})
    $$
    and
    $$
    (L^+f)_*\mathcal{O}_{\mathcal{T}^+}\on{-FactMod}(\QCoh(\mathcal{S}_0)) \simeq \on{Tot}((L^+\tilde{f}^{\bullet})_*\mathcal{O}_{\tilde{\mathcal{T}}^{+, \bullet}}\on{-FactMod}(\QCoh(\tilde{\mathcal{S}}_0^{\bullet}))).
    $$
    Therefore, the claim follows from the $H = \{e\}$ case (and pullbacks thereof).
    
\end{proof}

\section{Arkhipov-Bezrukavnikov Equivalence} \label{Section AB}

\subsection{Fusability} \label{Fusability}
In this subsection, we will discuss the notion of a \emph{fusable} factorization $\on{Rep}(\Gcheck)$-module category. This notion will be defined carefully in the upcoming paper of Bogdanova, Chen, and Gaitsgory \cite{BCG}.

\sssec{}
Consider
$$
\on{LS}_{\Gcheck, x_0}^{\on{mer}} = \on{LS}_{\Gcheck}(D_{x_0}^{\times}).
$$
Then we can form the ($2$-, by which we mean $(\infty, 2)$-)category of module categories over the symmetric monoidal category $\QCoh(\on{LS}_{\Gcheck}(D_{x_0}^{\times}))$ and denote it
$$
\QCoh(\on{LS}_{\Gcheck}(D_{x_0}^{\times}))\on{-ModCat}.
$$
In Section 3.1 of \cite{Katia}, Bogdanova constructs the ($2$-)functor
$$
\on{Fact} : \QCoh(\on{LS}_{\Gcheck}(D_{x_0}^{\times}))\on{-ModCat} \longrightarrow \on{Rep}(\Gcheck)\on{-FactModCat}_{x_0}
$$
given by
$$
\boldsymbol{M} \mapsto \QCoh(\on{LS}_{\Gcheck}(D_{\Ran_{x_0}}^{\times})) \underset{\QCoh(\on{LS}_{\Gcheck}(D_{x_0}^{\times}))}{\otimes} \boldsymbol{M}.
$$
This is conjectured to be fully faithful, and in loc. cit. the following progress is made towards this conjecture:

\sssec{}
In Section 3.3 of \cite{Katia}, the definition of the stack of restricted local systems $\on{LS}_{\Gcheck}^{\on{restr}}(D_{x_0}^{\times})$ is given, with a map
$$
\on{LS}_{\Gcheck}^{\on{restr}}(D_{x_0}^{\times}) \longrightarrow \on{LS}_{\Gcheck}(D_{x_0}^{\times})
$$
which defines a monoidal functor
$$
\QCoh(\on{LS}_{\Gcheck}(D_{x_0}^{\times})) \longrightarrow \QCoh(\on{LS}_{\Gcheck}^{\on{restr}}(D_{x_0}^{\times}))
$$
via pullback, which in turn gives rise to
$$
\QCoh(\on{LS}_{\Gcheck}^{\on{restr}}(D_{x_0}^{\times}))\on{-ModCat} \longrightarrow \QCoh(\on{LS}_{\Gcheck}(D_{x_0}^{\times}))\on{-ModCat}
$$
via restriction.

Denoting the composition
$$
\QCoh(\on{LS}_{\Gcheck}^{\on{restr}}(D_{x_0}^{\times}))\on{-ModCat} \longrightarrow \QCoh(\on{LS}_{\Gcheck}(D_{x_0}^{\times}))\on{-ModCat} \overset{\on{Fact}}{\longrightarrow} \on{Rep}(\Gcheck)\on{-FactModCat}_{x_0} 
$$
by $\on{Fact}^{\on{restr}}$, Bogdanova proves in Theorem 3.8 of loc. cit. the following
\begin{prop}
    $\on{Fact}^{\on{restr}}$ is fully faithful. Moreover, the diagram
    $$
    \begin{tikzcd}
        {\QCoh(\on{LS}_{\Gcheck}^{\on{restr}}(D_{x_0}^{\times}))\on{-ModCat}} && {\on{Rep}(\Gcheck)\on{-FactModCat}_{x_0}} \\
	\\
	& {\on{DGCat}}
	\arrow["{\on{Fact}^{\on{restr}}}", from=1-1, to=1-3]
	\arrow["{\on{oblv}}"', from=1-1, to=3-2]
	\arrow["{\on{cores}_{x_0}}", from=1-3, to=3-2]
    \end{tikzcd}
    $$
    commutes.
\end{prop}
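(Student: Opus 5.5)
The statement has two parts: $\on{Fact}^{\on{restr}}$ is fully faithful, and the triangle commutes. I would prove the commutativity first and then bootstrap from it to full faithfulness.

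\emph{Commutativity.} Take $\boldsymbol{M}$ a $\QCoh(\on{LS}_{\Gcheck}^{\on{restr}}(D_{x_0}^{\times}))$-module. Computing $\on{cores}_{x_0}$ of
$$
\QCoh(\on{LS}_{\Gcheck}(D_{\Ran_{x_0}}^{\times})) \underset{\QCoh(\on{LS}_{\Gcheck}(D_{x_0}^{\times}))}{\otimes} \boldsymbol{M}
$$
amounts to base changing the sheaf of categories along $\{x_0\} \to \Ran_{x_0}$. Since the relative tensor product commutes with this base change, the fiber is $\QCoh(\on{LS}_{\Gcheck}(D_{x_0}^{\times})) \otimes_{\QCoh(\on{LS}_{\Gcheck}(D_{x_0}^{\times}))} \boldsymbol{M} \simeq \boldsymbol{M}$. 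The only inputs needed are that $\on{LS}_{\Gcheck}(D_{\Ran_{x_0}}^{\times})$ restricts over $\{x_0\}$ to $\on{LS}_{\Gcheck}(D_{x_0}^{\times})$, and that $\QCoh$ of these stacks behaves well (dualizability, commuting with fiber products). I would use the restricted version here because, after base change to $\on{LS}^{\on{restr}}$, the stacks become ind-algebraic and 1-affine.

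\emph{Full faithfulness.} I would build an explicit right adjoint to $\on{Fact}^{\on{restr}}$ and show the unit is an equivalence. For a factorization $\on{Rep}(\Gcheck)$-module category $\mathcal{N}$, the fiber $\mathcal{N}_{x_0}$ carries an action of $\QCoh(\on{LS}_{\Gcheck}(D_{x_0}^{\times}))$: colliding points of $\Ran$ with $x_0$ lets the factorization category $\on{Rep}(\Gcheck)$ act near $x_0$. The candidate right adjoint is then
$$
\mathcal{N} \mapsto \mathcal{N}_{x_0} \otimes_{\QCoh(\on{LS}_{\Gcheck}(D_{x_0}^{\times}))} \QCoh(\on{LS}^{\on{restr}}_{\Gcheck}(D_{x_0}^{\times})),
$$
or rather its coinduced version. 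Using the commutativity above, the unit at $\boldsymbol{M}$ reduces to the map
$$
\boldsymbol{M} \to \boldsymbol{M} \otimes_{\QCoh(\on{LS})} \QCoh(\on{LS}^{\on{restr}}).
$$
This map is an equivalence precisely when $\QCoh(\on{LS}^{\on{restr}}) \otimes_{\QCoh(\on{LS})} \QCoh(\on{LS}^{\on{restr}}) \simeq \QCoh(\on{LS}^{\on{restr}})$, i.e. when $\on{LS}^{\on{restr}} \to \on{LS}$ is a monomorphism of the right kind (a formal-completion-type map) and $\QCoh$ commutes with that self-fiber product.

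\emph{Main obstacle.} The hard step is showing that the fiber at $x_0$ really captures the full $\QCoh(\on{LS}(D_{x_0}^{\times}))$-action. Concretely, one must identify $\QCoh(\on{LS}_{\Gcheck}(D_{x_0}^{\times}))$-linear structure with factorization $\on{Rep}(\Gcheck)$-module structure. I would attack this through a Beilinson–Drinfeld-type description: $\QCoh(\on{LS}^{\on{restr}}(D_{x_0}^{\times}))$ is generated by $\on{Rep}(\Gcheck)$ through the fusion/nearby-cycles functor, so a morphism of factorization module categories is determined by its fiber together with compatibility with fusion. I expect the argument to lean on the 1-affineness of $\on{LS}^{\on{restr}}$, and this is exactly why it is unavailable for the unrestricted stack.
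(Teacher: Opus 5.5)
The paper gives no argument of its own for this statement; it imports it as Bogdanova's Theorem 3.8. So your proposal has to stand on its own.

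\textbf{Commutativity.} This half is fine, and it is purely formal. The fiber of $\on{LS}_{\Gcheck}(D^{\times}_{\Ran_{x_0}})$ at $\{x_0\}$ is $\on{LS}_{\Gcheck}(D^{\times}_{x_0})$, so $\on{Fact}^{\on{restr}}(\boldsymbol{M})_{x_0}\simeq \QCoh(\on{LS}_{\Gcheck}(D^{\times}_{x_0}))\otimes_{\QCoh(\on{LS}_{\Gcheck}(D^{\times}_{x_0}))}\boldsymbol{M}\simeq\boldsymbol{M}$. Neither the restricted stack nor any 1-affineness is involved.

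\textbf{Full faithfulness: the gap.} The foundation of your argument fails. Your right adjoint needs two things:
(a) a $\QCoh(\on{LS}_{\Gcheck}(D^{\times}_{x_0}))$-action on the fiber $\mathcal{N}_{x_0}$ of an \emph{arbitrary} factorization $\on{Rep}(\Gcheck)$-module category, natural in $\mathcal{N}$;
(b) that $\QCoh(\on{LS}_{\Gcheck}(D^{\times}_{x_0}))$-linear functors $\boldsymbol{M}\to\mathcal{N}_{x_0}$ extend uniquely to factorization functors $\on{Fact}(\boldsymbol{M})\to\mathcal{N}$.

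Neither is available. What factorization actually gives you at $x_0$ is the fusion $i^{\bullet}j_{\bullet}$, and for general $\mathcal{N}$ it lands only in $\on{Pro}(\mathcal{N}_{x_0})$. Moreover, (a) and (b) together, combined with your commutativity computation, would make $\mathcal{N}\mapsto\mathcal{N}_{x_0}$ right adjoint to $\on{Fact}$ with invertible unit. That is, they would prove that $\on{Fact}$ itself is fully faithful, which is exactly the conjecture the paper records just before this proposition. So your scheme reduces the theorem to a strictly stronger open statement.

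In that scheme, the restricted stack enters only through the idempotence $\QCoh(\on{LS}^{\on{restr}})\otimes_{\QCoh(\on{LS})}\QCoh(\on{LS}^{\on{restr}})\simeq\QCoh(\on{LS}^{\on{restr}})$. You assert this without justification for the non-algebraic stack $\on{LS}_{\Gcheck}(D^{\times}_{x_0})$. Also, the tensor formula you write first is induction, which is the \emph{left} adjoint of restriction; only the coinduced version could serve as a right adjoint.

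\textbf{What is missing.} You should not route through $\on{Fact}$ at all. The restricted stack is there because only \emph{lisse} fusion is controllable: it lands in the fiber itself, whereas fusion of arbitrary D-modules on $U$ lands only in pro-objects. For $\QCoh(\on{LS}^{\on{restr}}_{\Gcheck}(D^{\times}_{x_0}))$-modules $\boldsymbol{M}_1,\boldsymbol{M}_2$, compare Hom categories directly.
\begin{enumerate}
\item Stratify $X^n$ and use recollement inductively, as in the paper's conservativity argument. A factorization functor $\on{Fact}^{\on{restr}}(\boldsymbol{M}_1)\to\on{Fact}^{\on{restr}}(\boldsymbol{M}_2)$ then amounts to its fiber $\boldsymbol{M}_1\to\boldsymbol{M}_2$ plus coherent compatibility with the gluing (fusion) functors.
\item Show that this compatibility is determined on the $\on{QLisse}$ part. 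There, fusion of $\on{Rep}(\Gcheck)^{\otimes n}\otimes\on{QLisse}(U)^{\otimes n}$ lands in $\boldsymbol{M}_i$ itself and is computed through the $\QCoh(\on{LS}^{\on{restr}}_{\Gcheck}(D^{\times}_{x_0}))$-action.
\item Prove a local analogue of the Arinkin--Gaitsgory--Kazhdan--Raskin--Rozenblyum--Varshavsky spectral decomposition for $\on{LocSys}^{\on{restr}}$. That is, $\QCoh(\on{LS}^{\on{restr}}_{\Gcheck}(D^{\times}_{x_0}))$ is the universal recipient of such coherent lisse fusion data, so compatibility with fusion is the same as $\QCoh(\on{LS}^{\on{restr}})$-linearity.
\end{enumerate}
Your last paragraph gestures at this, but that statement is the whole content of the theorem. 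It does not fill a step in your adjoint argument; it replaces that argument.
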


\sssec{}
Here is a definition of fusability.

\begin{defn}
    A factorization $\on{Rep}(\Gcheck)$-module category $\mathcal{M}$ is fusable if it is in the essential image of the functor
    $$
    \on{Fact}^{\on{restr}} : \QCoh(\on{LS}_{\Gcheck}^{\on{restr}}(D_{x_0}^{\times}))\on{-ModCat} \longrightarrow \on{Rep}(\Gcheck)\on{-FactModCat}_{x_0}.
    $$
    We will denote the full subcategory of $\on{Rep}(\Gcheck)\on{-FactModCat}_{x_0}$ consisting of fusable objects by
    $$
    \on{Rep}(\Gcheck)\on{-FactModCat}_{x_0}^{\on{fus}} \subset \on{Rep}(\Gcheck)\on{-FactModCat}_{x_0}.
    $$
\end{defn}

\begin{rem}
    In \cite{BCG}, a combinatorial characterization of fusability will appear, which will make it possible to show in practice that particular factorization $\on{Rep}(\Gcheck)$-module categories are fusable. Let us give a sketch of this characterization now:

    Given a factorization $\on{Rep}(\Gcheck)$-module category $\mathcal{M}$, we can consider its restriction to one copy of the curve, $\mathcal{M}_X$. Denoting
    $$
    U \overset{j}{\hookrightarrow} X \overset{i}{\hookleftarrow} x_0,
    $$
    we note that the further restriction to $U$ is
    $$
    \mathcal{M}_U \simeq \on{Rep}(\Gcheck) \otimes \underline{\Dmod}(U) \otimes \mathcal{M}_{x_0}.
    $$

    There is a functor
    $$
    j_{\bullet} : \on{Rep}(\Gcheck) \otimes \Dmod(U) \otimes \mathcal{M}_{x_0} \simeq \Gamma(U, \mathcal{M}_U) \longrightarrow \Gamma(X, \mathcal{M}_X)
    $$
    and a functor
    $$
    i^{\bullet} : \Gamma(X, \mathcal{M}_X) \longrightarrow \on{Pro}(\mathcal{M}_{x_0}).
    $$
    We can restrict the composition
    $$
    i^{\bullet} j_{\bullet} : \on{Rep}(\Gcheck) \otimes \Dmod(U) \otimes \mathcal{M}_{x_0} \to \on{Pro}(\mathcal{M}_{x_0})
    $$
    to
    $$
    \on{Rep}(\Gcheck) \otimes \on{QLisse}(U) \otimes \mathcal{M}_{x_0} \subset \on{Rep}(\Gcheck) \otimes \Dmod(U) \otimes \mathcal{M}_{x_0}.
    $$

    Then the condition of $\mathcal{M}$ being \emph{fusable} is equivalent to
    $$
    \on{Rep}(\Gcheck) \otimes \on{QLisse}(U) \otimes \mathcal{M}_{x_0} \longrightarrow \on{Pro}(\mathcal{M}_{x_0})
    $$
    landing in $\mathcal{M}_{x_0}$ instead of its pro-category (plus some conditions having to do with higher powers of the curve, which will appear in \cite{BCG}).
\end{rem}

\sssec{} \label{useful fusable}
The usefulness of this notion is twofold. First, because
$$
\on{oblv} : \QCoh(\on{LS}_{\Gcheck}^{\on{restr}}(D_{x_0}^{\times}))\on{-ModCat} \longrightarrow \on{DGCat}
$$
is conservative, if $F : \mathcal{M}_1 \longrightarrow \mathcal{M}_2$ is a factorization functor between fusable factorization $\on{Rep}(\Gcheck)$-module categories, $F$ is an equivalence if $F_{x_0}$ is an equivalence.

Second, if $\mathcal{M}$ is factorization $\on{Rep}(\Gcheck)$-module category, there is a factorization $\on{Rep}(\Gcheck)$-submodule category
$$
\mathcal{M}_{\on{QLisse}} \subset \mathcal{M}
$$
characterized as follows.

Let $U = X \setminus \{x_0\}$ and for each surjective map $n \to m$, let $\Delta_{n \to m} : X^m \longrightarrow X^n$ be the diagonal. $\Delta_n$ will denote the union of all (non-identity) diagonals in $X^n$. We think of $X^n$ (and its subsets) as mapping to $\Ran_{x_0}$ via

$$
(x_1, ..., x_n) \mapsto \{x_0, x_1, ..., x_n\}.
$$

We have by factorization
$$
\mathcal{M}|_{U^n \setminus \Delta_n} \simeq (\on{Rep}(\Gcheck)^{\otimes n} \otimes \Dmod(U)^{\otimes n} \otimes \mathcal{M}_{x_0})|_{U^n \setminus \Delta_n}
$$
and we define $\mathcal{M}_{\on{QLisse}}$ by
$$
\mathcal{M}_{\on{QLisse}}|_{U^n \setminus \Delta_n} \simeq (\on{Rep}(\Gcheck)^{\otimes n} \otimes \on{QLisse}(U)^{\otimes n} \otimes \mathcal{M}_{x_0})|_{U^n \setminus \Delta_n}.
$$

For $\mathcal{M}$ fusable, it is the case that constructing a factorization functor between factorization $\on{Rep}(\Gcheck)$-module categories with source $\mathcal{M}_{\on{QLisse}}$ is sufficient to construct one with source $\mathcal{M}$.

\sssec{}
In the next subsection, we will use this latter fact to construct the morphism of pairs $\on{AB}$, and in the following subsection, we will use the former observation to argue that it is an equivalence of pairs. To do so, we will need the following

\begin{prop}
    $\on{Whit}^!(\on{Fl}_G)$ is fusable when thought of as a factorization $\on{Rep}(\Gcheck)$-module category via the equivalence of factorization categories $\on{CS}_G$.
\end{prop}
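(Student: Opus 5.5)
We will verify the combinatorial characterization of fusability recalled in the Remark above (from \cite{BCG}). Concretely, we will show that for $\mathcal{M} = \on{Whit}^!(\on{Fl}_G)$, regarded as a factorization $\on{Rep}(\Gcheck)$-module category via $\on{CS}_G$, the functor
$$
i^{\bullet} j_{\bullet} : \on{Rep}(\Gcheck) \otimes \on{QLisse}(U) \otimes \mathcal{M}_{x_0} \longrightarrow \on{Pro}(\mathcal{M}_{x_0})
$$
lands in $\mathcal{M}_{x_0}$. We will also establish the analogous statements over the higher powers $X^n$.

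The first step is to make the geometry explicit. The restriction of $\on{Fl}_{G, \Ran_{x_0}, \rho(\omega_X)}$ to $X$ is the Beilinson--Drinfeld degeneration: over $U$ it is $\on{Gr}_{G,U} \times \on{Fl}_{G,x_0}$, and over $x_0$ it is $\on{Fl}_{G,x_0}$. We then take $(LN,\chi)$-invariants. Since the functors $j_\bullet$ and $i^\bullet$ commute with the $\on{Sph}_G$-action coming from the spherical part, and hence (under $\on{Sat}_G$/$\on{CS}_G$) with the $\on{Rep}(\Gcheck)$-action, we will reduce to objects of the form $V \otimes \mathcal{L} \otimes m$. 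Here $V \in \on{Rep}(\Gcheck)^{\heartsuit}$ is finite-dimensional, $\mathcal{L}$ is a local system on $U$, and $m$ is a compact object of $\on{Whit}^!(\on{Fl}_G)_{x_0}$.

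Under $\on{CS}_G$, the object $V \otimes m$ over the point $x \in U$ corresponds to the Whittaker object $\mathcal{S}_V \boxtimes m$, where $\mathcal{S}_V$ is the Satake sheaf. Consequently $i^\bullet j_\bullet(V \otimes \mathcal{L} \otimes m)$ is the nearby-cycles functor, twisted by the monodromy of $\mathcal{L}$ around $x_0$, applied to the family $\mathcal{S}_V \boxtimes \mathcal{L} \boxtimes m$. This nearby cycles computation is exactly Gaitsgory's central sheaf construction applied to $\mathcal{S}_V$, followed by convolution with $m$ and tensoring with the unipotent monodromy from $\mathcal{L}$.

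The second step is to show the resulting pro-object is essentially constant. For $\mathcal{L}$ unipotent of finite rank, the pro-object is obtained by applying the Whittaker averaging of the central sheaf $Z_V \star m$ against a finite Jordan block. This is an honest object of $\mathcal{M}_{x_0}$, because the monodromy endomorphism of $Z_V$ is nilpotent (it is the logarithm of monodromy, as in Gaitsgory and Arkhipov--Bezrukavnikov). For non-unipotent monodromy, the nearby cycles vanish. The essential point is that in the Pro-category the only potential failure is an infinite tower coming from the free unipotent part. Since $\on{QLisse}(U)$ is generated by lisse objects (finite rank local systems), each such object produces a finite Jordan block, and the tower stabilizes. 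We then pass to colimits in $\on{QLisse}(U)$; this uses that $i^\bullet j_\bullet$ restricted to compacts lands in $\mathcal{M}_{x_0}$ and that we Ind-extend.

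The third step handles the higher powers of the curve. By factorization, over $U^n \setminus \Delta_n$ the computation reduces to iterated nearby cycles. These iterated nearby cycles commute with fusion, by the commutativity of central sheaves under convolution and Gaitsgory's result that central sheaves are compatible with fusion. Hence the same finiteness persists.

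The main obstacle is this second step: identifying $i^\bullet j_\bullet$ on Whittaker categories with central-sheaf nearby cycles in the factorization (rather than pointwise) setting, and checking that the pro-object is genuinely constant. I would first try to deduce it from \cite{GLC3}'s semi-infinite construction: show that $\on{Whit}^!(\on{Fl}_G)_{x_0}$ is generated by images of Wakimoto-type objects on which the monodromy acts through $\QCoh(\ncheck/\Bcheck)$. Then the constancy follows from the fact that the monodromy acts through the nilpotent element of $\ncheck$, which is a regular function and is locally nilpotent on coherent objects.
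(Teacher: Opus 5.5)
Your proposal follows essentially the same route as the paper. On the degeneration $\on{Fl}'\to X$ of \cite{Braiding}, the functor $i^\bullet j_\bullet$ on $\on{Rep}(\Gcheck)\otimes\on{QLisse}(U)\otimes\mathcal{M}_{x_0}$ is identified, via naive Satake, with convolution against Gaitsgory's nearby-cycles (central) objects, so it lands in $\mathcal{M}_{x_0}$ rather than $\on{Pro}(\mathcal{M}_{x_0})$. The only difference is that the paper runs this for $\Dmod(\on{Fl}_G)$, where the Satake action is native and $LG$-linear, and then applies $\on{Whit}^!$; this makes literal the step you gloss by writing the $\on{CS}_G$-image of $V\otimes m$ as ``$\mathcal{S}_V\boxtimes m$'' (it is really $\on{CS}_G^{-1}(V)\boxtimes m$, i.e.\ $\mathcal{S}_V$ acting at the moving point on the unit extension of $m$).
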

\begin{proof}
    In \cite{Braiding}, the family $\on{Fl}' \to X$ is studied, which is the factorization module space $\on{Fl}_{G, \Ran_{x_0}}$ restricted to one copy of the curve. Using the convolution product
    $$
    \Dmod(\on{Gr}_G)^{L^+G} \otimes \Dmod(U) \otimes \Dmod(\on{Fl}_G) \longrightarrow \Dmod(\on{Fl}_G)
    $$
    we obtain our desired functor $i^{\bullet}j_{\bullet}$ via the (pointwise) naive Satake functor $\on{Rep}(\Gcheck) \to \Dmod(\on{Gr}_G)^{L^+G}$. Thus, $\Dmod(\on{Fl}_G)$ is fusable, and after taking $\on{Whit}^!$ of both sides, so is $\on{Whit}^!(\on{Fl}_G)$.
\end{proof}

\sssec{}
Furthermore, by construction,
\begin{prop}
    $\on{QCoh}(\ncheck/\Bcheck)$ is fusable.
\end{prop}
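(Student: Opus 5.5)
The plan is to exhibit $\QCoh(\ncheck/\Bcheck)$ directly as $\on{Fact}^{\on{restr}}$ of an explicit module category. The natural candidate is
$$
\boldsymbol{M} = \QCoh(\ncheck/\Bcheck),
$$
the fiber at $x_0$. It is a module over $\QCoh(\on{LS}_{\Gcheck}(D_{x_0}^{\times}))$ via pullback along
$$
\ncheck/\Bcheck \simeq \on{LS}_{\Bcheck}(D_{x_0}^{\times}) \underset{\on{LS}_{\Tcheck}(D_{x_0}^{\times})}{\times} \on{LS}_{\Tcheck}(D_{x_0}) \longrightarrow \on{LS}_{\Gcheck}(D_{x_0}^{\times}).
$$

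\emph{Step 1: the module structure descends to the restricted stack.} I would show that the map $\ncheck/\Bcheck \to \on{LS}_{\Gcheck}(D_{x_0}^{\times})$ factors through $\on{LS}_{\Gcheck}^{\on{restr}}(D_{x_0}^{\times})$. A point of $\ncheck/\Bcheck$ is a $\Bcheck$-local system on $D^\times_{x_0}$ whose induced $\Tcheck$-local system extends over $D_{x_0}$. Such a connection is gauge equivalent to $d + n\,\frac{dt}{t}$ with $n \in \ncheck$, so it has regular singularities and unipotent monodromy. Since $\ncheck/\Bcheck$ is a finite-dimensional smooth stack, it is in particular a formal completion of a locally finite-type stack. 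I therefore expect it to map into the restricted locus by the defining property in Section 3.3 of \cite{Katia}. As a consequence, $\boldsymbol{M}$ acquires a $\QCoh(\on{LS}_{\Gcheck}^{\on{restr}}(D_{x_0}^{\times}))$-module structure whose restriction recovers the original one.

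\emph{Step 2: identify $\on{Fact}$ of $\boldsymbol{M}$ with $\QCoh(\ncheck/\Bcheck)$.} By definition,
$$
\on{Fact}(\boldsymbol{M}) = \QCoh(\on{LS}_{\Gcheck}(D_{\Ran_{x_0}}^{\times})) \underset{\QCoh(\on{LS}_{\Gcheck}(D_{x_0}^{\times}))}{\otimes} \QCoh(\ncheck/\Bcheck).
$$
The factorization module space $\ncheck/\Bcheck_{\Ran_{x_0}}$ was defined as exactly the fiber product
$$
\on{LS}_{\Gcheck}(D_{\Ran_{x_0}}^{\times}) \underset{\on{LS}_{\Gcheck}(D_{x_0}^{\times})}{\times} \ncheck/\Bcheck.
$$
So it suffices to have a base-change/Künneth statement: $\QCoh$ of this fiber product agrees with the relative tensor product, over each $S \to \Ran_{x_0}$. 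The Künneth statement would follow if $\QCoh(\ncheck/\Bcheck)$ is dualizable as a module over $\QCoh(\on{LS}_{\Gcheck}(D_{x_0}^{\times}))$, or better over the restricted version, where one has 1-affineness-type results from \cite{Katia}. After that, I would check that the factorization $\on{Rep}(\Gcheck)$-module structures match. Both come from the same restriction maps $D_{\underline{x}}^{\times} \supset D_{x_0}^\times$, so this should be tautological, and I would appeal to the construction of $\on{Fact}$ in \cite{Katia} being built from exactly this geometry.

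\emph{Main obstacle.} I expect the main difficulty to be the Künneth identification in Step 2, because $\on{LS}_{\Gcheck}(D_{x_0}^{\times})$ is not algebraic. I would first pass through the restricted version. There $\ncheck/\Bcheck \to \on{LS}^{\on{restr}}_{\Gcheck}(D^\times_{x_0})$ is a map between reasonable (formal) algebraic stacks, and quasi-compactness and passability of $\ncheck/\Bcheck$ should give the tensor product formula by the standard argument for $\QCoh$ of fiber products of perfect stacks. This is the precise sense in which the statement holds ``by construction''.
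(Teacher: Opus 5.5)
Your proposal follows the paper's argument. The paper likewise identifies $\QCoh(\ncheck/\Bcheck)$ with $\on{Fact}^{\on{restr}}(\QCoh(\ncheck/\Bcheck)_{x_0})$ by unwinding the definition of $\ncheck/\Bcheck_{\Ran_{x_0}}$ as a fiber product over $\on{LS}_{\Gcheck}(D_{x_0}^{\times})$, and your Steps 1 and 2 spell out the two inputs it leaves implicit: the factorization through $\on{LS}_{\Gcheck}^{\on{restr}}(D_{x_0}^{\times})$ (which the paper also asserts without proof when defining $\on{Aff}_{\Gcheck}^{\on{spec}}$), and the base-change identification. For the base-change step, the perfect-stack argument does not apply verbatim because $\on{LS}_{\Gcheck}(D_{x_0}^{\times})$ is not algebraic; a cleaner route is to factor $\ncheck/\Bcheck \to \on{LS}_{\Gcheck}(D_{x_0}^{\times})$ as the Springer map followed by the closed embedding $\check{\mathcal{N}}/\Gcheck \hookrightarrow \on{LS}_{\Gcheck}(D_{x_0}^{\times})$ used later in the paper, and then use that base change holds for affine maps and for $\on{pt}/\Bcheck \to \on{pt}/\Gcheck$ (by dualizability over the rigid $\on{Rep}(\Gcheck)$).
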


\begin{proof}
    Unwinding our construction of $\on{QCoh}(\ncheck/\Bcheck)$ and comparing it with the description of $\on{Fact}^{\on{restr}}$ in \cite{Katia} makes it clear that $\on{QCoh}(\ncheck/\Bcheck) \simeq \on{Fact}^{\on{restr}}(\on{QCoh}(\ncheck/\Bcheck)_{x_0})$.
\end{proof}

\subsection{Construction of AB} \label{Construction of AB}
In this subsection, we will explain how to construct the morphism of pairs
$$
\on{AB} : ( \on{Whit}^!(G), \on{Whit}^!(\on{Fl}_G)) \longrightarrow (\on{Rep}(\Gcheck), \on{QCoh}(\ncheck/\Bcheck))
$$
in $\FactModCat$.

\sssec{}
We will use the pair
$$
(\on{Whit}^!(G), \on{Whit}^!(LG)^{\frac{\infty}{2}})
$$
as an intermediary. Here
$$
\on{Whit}^!(LG)^{\frac{\infty}{2}} = \on{Whit}^!(LG)^{LNL^+T}
$$
where by $LNL^+T$ we mean the factorization $L^+G$-module (ind-)group at $x_0$.

\sssec{}
There is a functor
$$
(\on{Whit}^!(G), \on{Whit}^!(\on{Fl}_G)) \longrightarrow (\on{Whit}^!(G), \on{Whit}^!(LG)^{\frac{\infty}{2}})
$$
that is a straightforward upgrade of the pointwise version (which is an equivalence)
$$
\on{Whit}^!(\on{Fl}_G)_{x_0} \overset{\simeq}{\longrightarrow} \on{Whit}^!(LG)_{x_0}^{\frac{\infty}{2}}
$$
that appears (more precisely, as the inverse, thus continuous left adjoint, of the functor) in Corollary 6.2.3 of \cite{CPSC2}.

\sssec{}
We will adapt the construction from Appendix B of \cite{GLC3} (where the \emph{factorization categories} $\on{Whit}^!(LG)^{\frac{\infty}{2}}$ and $\IndCoh(\on{LS}_{\Bcheck}^{\on{mer}} \underset{\on{LS}_{\Tcheck}^{\on{mer}}}{\times} \on{LS}_{\Bcheck}^{\on{reg}})$ are related) to the setting of factorization module categories in order to construct a map
$$
(\on{Whit}^!(G), \on{Whit}^!(LG)^{\frac{\infty}{2}}) \longrightarrow (\on{Rep}(\Gcheck), \on{QCoh}(\ncheck/\Bcheck)).
$$

\sssec{}
On the geometric side, consider the functor
$$
\on{Whit}^!(LG)^{\frac{\infty}{2}} \longrightarrow \on{Whit}^!(T) \simeq \Dmodh(\on{Gr}_T)
$$
given by
\begin{align}
    \on{Whit}^!(LG)^{\frac{\infty}{2}} &\subset \Dmodh(LG)^{LNL^+T} \\
    &\simeq (\Dmodh(LG) \otimes \Dmodh(\on{Gr}_T))^{LB} \\
    &\to \Dmodh(LG) \otimes \Dmodh(\on{Gr}_T) \\
    &\overset{1_{LG}^!}{\to} \Dmodh(\on{Gr}_T) \overset{\on{shift}}{\to} \Dmodh(\on{Gr}_T).
\end{align}
This has the structure of being a morphism of pairs
$$
(\on{Whit}^!(G), \on{Whit}^!(LG)^{\frac{\infty}{2}}) \longrightarrow (\on{Whit}^!(T), \on{Whit}^!(T))
$$
where the first component $\on{Whit}^!(G) \to \on{Whit}^!(T)$ is the Jacquet functor $J_{\on{Whit}}^!$ defined in \cite{GLC3}.

This map factors through
$$
(\on{Whit}^!(G), \on{Whit}^!(LG)^{\frac{\infty}{2}}) \longrightarrow \Omega \on{-FactModCat}(\on{Whit}^!(T), \on{Whit}^!(T)).
$$
Abusively denoting the factorization restriction of $\Omega \on{-FactModCat}(\on{Whit}^!(T))$ along $\on{Whit}^!(G) \to \Omega \on{-FactModCat}(\on{Whit}^!(T))$ by
$$
(\on{Whit}^!(G), \Omega \on{-FactMod}(\on{Whit}^!(T))),
$$
we obtain a morphism
$$
(\on{Whit}^!(G), \on{Whit}^!(LG)^{\frac{\infty}{2}}) \longrightarrow (\on{Whit}^!(G), \Omega \on{-FactMod}(\on{Whit}^!(T)))
$$
which is an equivalence when restricted to $x_0 \in \Ran_{x_0}$.

\sssec{}
On the spectral side, we can consider the functor
$$
\on{QCoh}(\ncheck/\Bcheck) \longrightarrow \on{Rep}(\Tcheck)
$$
given by pushforward along
$$
\on{LS}_{\Bcheck}(D^{\times}) \underset{\on{LS}_{\Tcheck}(D^{\times})}{\times} \on{LS}_{\Tcheck}(D) \longrightarrow \on{LS}_{\Tcheck}(D).
$$
This has the structure of being a morphism of pairs
$$
(\on{Rep}(\Gcheck), \on{QCoh}(\ncheck/\Bcheck)) \longrightarrow (\on{Rep}(\Tcheck), \on{Rep}(\Tcheck))
$$
where the first component $\on{Rep}(\Gcheck) \to \on{Rep}(\Tcheck)$ is given by restriction to $\on{Rep}(\Bcheck)$ and then taking invariants for $\check{N}$.

This map factors through
$$
(\on{Rep}(\Gcheck), \on{QCoh}(\ncheck/\Bcheck)) \longrightarrow \Omega^{\on{spec}}\on{-FactMod}(\on{Rep}(\Tcheck), \on{Rep}(\Tcheck))
$$
which factors through
$$
(\on{Rep}(\Gcheck), \on{QCoh}(\ncheck/\Bcheck)) \longrightarrow (\on{Rep}(\Gcheck), \Omega^{\on{spec}}\on{-FactMod}(\on{Rep}(\Tcheck))).
$$
This morphism is an equivalence when restricted to $x_0 \in \Ran_{x_0}$. Furthermore, it is also an equivalence on the eventually coconnective part of the second component, because so is the factorization functor (in $\FactAlgCat$)
$$
\on{IndCoh}(\on{LS}_{\Bcheck}^{\on{mer}} \underset{\on{LS}_{\Tcheck}^{\on{mer}}}{\times} \on{LS}_{\Bcheck}^{\on{reg}}) \longrightarrow \Omega^{\on{spec}}\on{-FactMod}(\on{Rep}(\Tcheck))
$$
by Proposition \ref{alg approximation}.

\sssec{}
We may identify
$$
(\on{Whit}^!(G), \Omega \on{-FactMod}(\on{Whit}^!(T))) \simeq (\on{Rep}(\Gcheck), \Omega^{\on{spec}}\on{-FactMod}(\on{Rep}(\Tcheck)))
$$
since the matching of $\Omega$ with $\Omega^{\on{spec}}$ follows from the diagram
$$
\begin{tikzcd}
    {\on{Whit}^!(G)} && {\on{Rep}(\Gcheck)} \\
	\\
	{\on{Whit}^!(T)} && {\on{Rep}(\Tcheck).}
	\arrow["{\on{CS}_G}", from=1-1, to=1-3]
	\arrow[from=1-1, to=3-1]
	\arrow[from=1-3, to=3-3]
	\arrow["{\on{CS}_T}", from=3-1, to=3-3]
\end{tikzcd}
$$

\sssec{}
We would like to finally construct the promised morphism
$$
\on{AB}: (\on{Whit}^!(G), \on{Whit}^!(\on{Fl}_G)) \longrightarrow (\on{Rep}(\Gcheck), \on{QCoh}(\ncheck/\Bcheck))
$$
by saying that the compact generators of $\on{Whit}^!(\on{Fl}_G)$ are sent to eventually coconnective objects in $\Omega^{\on{spec}}\on{-FactMod}(\on{Rep}(\Tcheck))$ (or, equivalently, in $\on{Rep}(\Tcheck)$). The trouble is that $\on{Whit}^!(\on{Fl}_G)$ is probably not compactly generated. However, we are rescued by the fusability of $\on{Whit}^!(\on{Fl}_G)$, since
$$
\on{Whit}^!(\on{Fl}_G)_{\on{QLisse}} \subset \on{Whit}^!(\on{Fl}_G)
$$
\emph{is} compactly generated, and it is straightforward to check that its compact generators are sent to eventually coconnectives. Thus, we obtain the desired morphism of pairs.

\subsection{AB is a Pointwise Equivalence} \label{AB is a Pointwise Equivalence}
\sssec{}
Observe that 
\begin{align}
\on{Whit}^!(\on{Fl}_G)_{x_0} \overset{\simeq}{\to} \on{Whit}^!(LG)_{x_0}^{\frac{\infty}{2}} &\overset{\simeq}{\to} \Omega\on{-FactMod}(\on{Whit}^!(T))_{x_0} \\ &\simeq \Omega^{\on{spec}}\on{-FactMod}(\on{Rep}(\Tcheck))_{x_0} \overset{\simeq}{\longleftarrow} \on{QCoh}(\ncheck/\Bcheck)_{x_0}
\end{align}
so 
$$
\on{AB}_{x_0} : \on{Whit}^!(\on{Fl}_G)_{x_0} \overset{\simeq}{\longrightarrow} \on{QCoh}(\ncheck/\Bcheck)_{x_0}
$$
is an equivalence of categories. Since $\on{Whit}^!(\on{Fl}_G)$ and $\on{QCoh}(\ncheck/\Bcheck)$ are fusable, we can conclude that $\on{AB}$ is an equivalence in $\FactModCat$, proving Theorem \ref{Theorem AB}.

\sssec{}
However, let us make another observation that is tautological but will be useful in the sequel. In \cite{AB}, the authors construct an equivalence of categories (up to renormalization)
$$
\on{AB}^{\on{classical}} : \on{Whit}^!(\on{Fl}_G)_{x_0} \simeq \on{QCoh}(\ncheck/\Bcheck)_{x_0}.
$$
$\on{AB}_{x_0}$ and $\on{AB}^{\on{classical}}$ may or may not be isomorphic, but since both functors are equivalences, we may write
$$
\alpha = \on{AB}_{x_0} \circ \on{AB}^{\on{classical}, -1}
$$
for the automorphism of $\on{QCoh}(\ncheck/\Bcheck)_{x_0}$ and
$$
\beta = \on{AB}^{\on{classical}, -1} \circ \on{AB}_{x_0}
$$
for the automorphsim of $\on{Whit}^!(\on{Fl}_G)_{x_0}$.

Then we have
$$
\on{AB}_{x_0} = \alpha \circ \on{AB}^{\on{classical}}
$$
and
$$
\on{AB}_{x_0} = \on{AB}^{\on{classical}} \circ \beta.
$$

\section{Bezrukavnikov's Equivalence} \label{Section B}
\subsection{Conservativity} \label{Conservativity}
In this subsection, we will describe our approach to working with factorization $\on{Sph}_{\Gcheck}^{\on{spec}}$-module categories.

\sssec{}
Let $\mathcal{C}_1 \to \mathcal{C}_2$ be a morphism in $\FactAlgCat$. Consider the (2-)functor of \emph{factorization restriction}
$$
\on{FactRes}_{\mathcal{C}_1 \to \mathcal{C}_2} : \mathcal{C}_2 \on{-FactModCat}_{x_0} \longrightarrow \mathcal{C}_1 \on{-FactModCat}_{x_0}
$$
defined in \cite{CF}. We will show that this functor is conservative for the factorization functor $\on{Rep}(\Gcheck) \to \on{Sph}_{\Gcheck}^{\on{spec}}$, but to do so, we will first need this

\begin{lem}
    Suppose that we have a space $\mathcal{X}$ such that $\mathcal{X}_{dR}$ is 1-affine. Furthermore, suppose 
    $$
    \mathcal{X} = \mathcal{U} \sqcup \mathcal{Z}
    $$
    where $\mathcal{U}$ is open and $\mathcal{Z}$ is closed, and assume that both $\mathcal{U}_{dR}$ and $\mathcal{Z}_{dR}$ are also 1-affine.
    Denote 
    $$
    \on{CrysCat}(\mathcal{X}) = \on{ShvCat}(\mathcal{X}_{dR}).
    $$
    Consider the assignment
    \begin{align}
    \on{CrysCat}(\mathcal{X}) &\longrightarrow (\on{CrysCat}(\mathcal{U}) \times \on{CrysCat}(\mathcal{Z})) \underset{\on{DGCat^{\times 2}}}{\times} \on{DGCat}^{\Delta^1} \\
    \mathcal{C} &\longmapsto (\mathcal{C}_U, \mathcal{C}_Z, i^*j_*).
    \end{align}
    This is conservative.
\end{lem}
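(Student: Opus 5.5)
Since $\mathcal{X}_{dR}$, $\mathcal{U}_{dR}$ and $\mathcal{Z}_{dR}$ are all 1-affine, I will replace crystals of categories by module categories throughout: $\on{CrysCat}(\mathcal{X}) \simeq \Dmod(\mathcal{X})\on{-ModCat}$, and likewise for $\mathcal{U}$ and $\mathcal{Z}$. Under these identifications, $\mathcal{C}_U = \Dmod(\mathcal{U}) \otimes_{\Dmod(\mathcal{X})} \mathcal{C}$ and $\mathcal{C}_Z = \Dmod(\mathcal{Z}) \otimes_{\Dmod(\mathcal{X})} \mathcal{C}$. The functor $i^*j_*$ is induced from the corresponding functor on D-modules. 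It is only defined as a (pro-)functor, or in the $!$-conventions as $i^!j_*$, so I will use whichever variant is continuous in the setting of the lemma.

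The key input is the open–closed recollement for D-modules, which holds for any $\mathcal{C}$ after tensoring. Because $\Dmod(\mathcal{U})$ and $\Dmod(\mathcal{Z})$ are dualizable and the localization sequence $\Dmod(\mathcal{Z}) \to \Dmod(\mathcal{X}) \to \Dmod(\mathcal{U})$ is split, tensoring with $\mathcal{C}$ gives a recollement: $\mathcal{C}_Z \hookrightarrow \mathcal{C} \to \mathcal{C}_U$, with $j^*$ having a fully faithful right adjoint $j_*$ and $i_*$ having a right adjoint $i^!$. Consequently, $\mathcal{C}$ is recovered as the lax limit (oriented fiber product)
$$
\mathcal{C} \simeq \mathcal{C}_U \overset{\rightarrow}{\underset{\mathcal{C}_Z}{\times}} \mathcal{C}_Z
$$
along the gluing functor $i^*j_*$ (or $i^!j_*$ in the dual convention). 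Concretely, an object $c$ corresponds to the triple $(j^*c, i^*c, i^*c \to i^*j_*j^*c)$. This is the standard reconstruction of a recollement from its gluing data.

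To prove conservativity, suppose $\phi : \mathcal{C} \to \mathcal{C}'$ is a morphism in $\on{CrysCat}(\mathcal{X})$ whose image is an equivalence. Then $\phi_U$ and $\phi_Z$ are equivalences, and they intertwine the gluing functors. Since $\phi$ is $\Dmod(\mathcal{X})$-linear, it commutes with $j^*, j_*, i^*, i_*$, all of which are given by tensoring. Therefore $\phi$ induces a functor between the lax limits that is compatible with the identifications above, and it is an equivalence because it is one on each of the three pieces of data. Hence $\phi$ is an equivalence. Faithfulness-type conservativity at the level of 2-morphisms follows in the same way.

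I expect the main obstacle to be verifying that tensoring the recollement with an arbitrary $\mathcal{C}$ still yields a recollement whose gluing functor is $\on{id}_{\mathcal{C}} \otimes i^*j_*$. This requires the adjoints $j_*$ and $i^!$ to be $\Dmod(\mathcal{X})$-linear and continuous. This is where 1-affineness of $\mathcal{U}_{dR}$ and $\mathcal{Z}_{dR}$ is used: it ensures that $j_*$ and $i^!$ come from morphisms of sheaves of categories, so they are linear. I would check this first, reducing to the statement that $j_{dR}$ and $i_{dR}$ induce pushforwards of sheaves of categories that are computed by the restriction functors on module categories.
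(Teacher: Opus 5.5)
Your strategy is the one the paper intends; its whole proof is ``By recollement''. However, your handling of the gluing functor has two concrete problems.

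First, $i^!j_*$ is not a ``dual convention'' for the gluing functor. It vanishes identically, since $\on{Hom}(z, i^!j_*u) \simeq \on{Hom}(j^*i_*z, u) = 0$. With it the lemma is false. Take $\mathcal{X} = \mathbb{A}^1$, $\mathcal{Z} = \{0\}$, and the linear functor $\Dmod(\mathcal{X}) \to \Dmod(\mathcal{U}) \times \Dmod(\mathcal{Z})$, $c \mapsto (j^*c, i^!c)$. It is the identity on the $\mathcal{U}$- and $\mathcal{Z}$-parts, because $\Dmod(\mathcal{U}) \otimes_{\Dmod(\mathcal{X})} \Dmod(\mathcal{Z}) = 0$, and it trivially intertwines $i^!j_* = 0$. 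Yet it kills the nonzero map $j_*\mathcal{O}_{\mathbb{G}_m} \to \delta_0$. So ``whichever variant is continuous'' picks the wrong one.

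Second, the meaningful gluing functor $i^*j_*$ is not continuous. For D-modules, $i_*$ has the continuous right adjoint $i^!$, but its left adjoint $i^*$ is defined only as a pro-functor (e.g.\ on holonomic objects), not on all of $\Dmod(\mathcal{X})$. So there is no $\on{id}_{\mathcal{C}} \otimes i^*j_*$ to form, and your triple $(j^*c, i^*c, i^*c \to i^*j_*j^*c)$ lives partly in $\on{Pro}(\mathcal{C}_Z)$. Your claim that $\phi$ commutes with $i^*$ ``because it is given by tensoring'' is therefore unjustified. By contrast, the linearity of $j_*$ and $i^!$, which you single out as the main obstacle, is just the projection formula and base change. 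The role of 1-affineness of $\mathcal{U}_{dR}$ and $\mathcal{Z}_{dR}$ is to identify $\mathcal{C}_U$ and $\mathcal{C}_Z$ with the relative tensor products.

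The repair is to tensor only the continuous structure:
\begin{itemize}
\item the adjunctions $i_* \dashv i^!$ and $j^* \dashv j_*$;
\item the identities $i^!i_* \simeq \on{id}$, $j^*j_* \simeq \on{id}$ and $j^*i_* = 0$;
\item the cofiber sequence $i_*i^! \to \on{id} \to j_*j^*$.
\end{itemize}
All of these are $\Dmod(\mathcal{X})$-linear, so they pass to every $\mathcal{C}$ and are intertwined by every linear $\phi$.

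The gluing datum is then the pairing $(u,z) \mapsto \on{Hom}(j_*u, i_*z)$. This is what ``$i^*j_*$'' must mean, either as a pro-functor or via the discontinuous right adjoint of $j_*$ composed with $i_*$. The hypothesis that the image of $\phi$ is an equivalence says exactly that $\on{Hom}(j_*u, i_*z) \to \on{Hom}(j_*\phi_U u, i_*\phi_Z z)$ is an isomorphism.

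\emph{Full faithfulness.} D\'evissage along the cofiber sequences of source and target reduces $\on{Hom}(c_1,c_2)$ to four kinds of terms:
\begin{itemize}
\item $\on{Hom}(c_1, j_*u) = \on{Hom}(j^*c_1, u)$;
\item $\on{Hom}(i_*z, i_*z') = \on{Hom}(z,z')$;
\item $\on{Hom}(i_*z, j_*u) = 0$;
\item the gluing terms $\on{Hom}(j_*u, i_*z)$.
\end{itemize}
Each of these is preserved by $\phi$.

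\emph{Essential surjectivity.} Every $c'$ is the fiber of its extension class $j_*j^*c' \to i_*i^!c'[1]$. The two endpoints lift along $\phi$ because $\phi_U$ and $\phi_Z$ are equivalences, and the class itself lifts by full faithfulness.
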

\begin{proof}
    By recollement.
\end{proof}

\begin{prop}
    Let
    $$
    \on{nv} : \on{Rep}(\Gcheck) \longrightarrow \on{Sph}_{\Gcheck}^{\on{spec}}
    $$
    denote the left adjoint constructed in Appendix B.2.13 of \cite{GLC2}. 
    
    Factorization restriction along $\on{nv}$
    $$
    \on{FactRes}_{\on{Rep}(\Gcheck) \to \on{Sph}_{\Gcheck}^{\on{spec}}} : \on{Sph}_{\Gcheck}^{\on{spec}} \on{-FactModCat}_{x_0} \longrightarrow \on{Rep}(\Gcheck) \on{-FactModCat}_{x_0}
    $$
    is conservative.
\end{prop}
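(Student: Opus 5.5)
The plan is to reduce conservativity to a stratification of $\Ran_{x_0}$ (and its powers of the curve) by collision type, apply the recollement Lemma just proved on each stratum, and compare the resulting data before and after restriction along $\on{nv}$. Let $F : \mathcal{M}_1 \to \mathcal{M}_2$ be a morphism of factorization $\on{Sph}_{\Gcheck}^{\on{spec}}$-module categories such that $\on{FactRes}(F)$ is an equivalence. Since $\on{FactRes}$ does not change the underlying sheaf of categories on $\Ran_{x_0}$, and only forgets part of the factorization structure, the underlying map of sheaves of categories is already an equivalence. What must be shown is that this equivalence respects the $\on{Sph}_{\Gcheck}^{\on{spec}}$-factorization module structure. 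Equivalently, the inverse functor, which a priori is only compatible with the $\on{Rep}(\Gcheck)$-structure, must in fact be compatible with the $\on{Sph}_{\Gcheck}^{\on{spec}}$-structure.

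The key input is that $\on{nv}$, being a left adjoint, generates $\on{Sph}_{\Gcheck}^{\on{spec}}$ in a strong sense. First, pointwise, $\on{Sph}_{\Gcheck,x}^{\on{spec}} = \IndCoh(\on{pt}/\Gcheck \times_{\gcheck/\Gcheck} \on{pt}/\Gcheck)$ is generated by the essential image of $\on{Rep}(\Gcheck)_x$ as a $\on{Rep}(\Gcheck)_x$-bimodule. Second, on the configuration spaces $X^n \setminus \Delta_n$, and likewise on their $x_0$-decorated analogues, the factorization structure splits the module data into tensor products of pointwise data. Here the module structure over $\on{Sph}^{\on{spec}}$ at a point is determined by the monad $\on{nv}^R\on{nv}$ acting on the $\on{Rep}(\Gcheck)$-module. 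Concretely, I would stratify $\Ran_{x_0}$, restricted to $X^n$ as in Subsection \ref{Fusability}, into the open locus $U^n\setminus\Delta_n$ and closed diagonal/$x_0$-strata. On each stratum, the data of a factorization $\on{Sph}^{\on{spec}}$-module reduces by factorization to data over a single point, namely an $\on{Sph}^{\on{spec}}_x$-module structure on the fiber. There, conservativity is the statement that being a module over $\on{Sph}^{\on{spec}}_x \simeq \on{Rep}(\Gcheck)_x$-$\on{End}$ is a property-plus-data determined by the monad, so an equivalence of underlying $\on{Rep}(\Gcheck)$-modules compatible with $F$ is compatible with the monad.

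I would then assemble these strata with the Lemma. The de Rham spaces of $X^n$, $U^n\setminus \Delta_n$ and the closed strata are 1-affine, since they are quasi-compact schemes. Hence a crystal of categories is recovered from its restrictions together with the gluing functor $i^*j_*$. The gluing functors are intertwined by $F$ because they are built from the factorization structure, which $F$ respects. Inducting on $n$ along the diagonal stratification yields conservativity on each $X^n$, and passing to the colimit over $n$ gives the result on $\Ran_{x_0}$.

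The main obstacle is the pointwise step: making precise that an $\on{Sph}_{\Gcheck}^{\on{spec}}$-factorization module structure is monadic over the $\on{Rep}(\Gcheck)$-one, that is, that factorization restriction along $\on{nv}$ behaves like the forgetful functor for modules over the algebra $\on{nv}^R\on{nv}(\mathbf{1})$ in a factorizable way. I would try first to use the description from \cite{GLC2} of $\on{Sph}_{\Gcheck}^{\on{spec}}$ as the factorization modules over $\on{nv}^R(\mathbf{1})$ in $\on{Rep}(\Gcheck)$, following Proposition \ref{alg approximation}, at least on eventually coconnective parts, together with compact generation by the image of $\on{nv}$. Conservativity of the forgetful functor from modules then gives the claim, with the Lemma used to handle the non-generic strata.
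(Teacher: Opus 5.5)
Your argument starts from the claim that $\on{FactRes}$ leaves the underlying sheaf of categories on $\Ran_{x_0}$ unchanged, and that claim is false. A factorization $\mathcal{C}$-module category must satisfy $(\mathcal{C}\boxtimes\mathcal{M})|_{disj}\simeq\cup^*\mathcal{M}$, so its restriction to $U^n\setminus\Delta_n$ is $\mathcal{C}^{\otimes n}\otimes\Dmod(U^n\setminus\Delta_n)\otimes\mathcal{M}_{x_0}$. Consequently:
\begin{itemize}
\item $\on{FactRes}(\mathcal{M})$ agrees with $\mathcal{M}$ only at $x_0$.
\item On $U^n\setminus\Delta_n$ it is $\on{Rep}(\Gcheck)^{\otimes n}\otimes\Dmod(U^n\setminus\Delta_n)\otimes\mathcal{M}_{x_0}$.
\item There is a comparison functor $\on{FactRes}(\mathcal{M})\to\mathcal{M}$, which on that stratum is $\on{nv}^{\otimes n}\otimes\on{id}$.
\end{itemize}
So the hypothesis does not say that $F$ is an equivalence of sheaves of categories. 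It only says directly that $F_{x_0}=\on{FactRes}(F)_{x_0}$ is one.

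Conversely, the step you call the main obstacle is not an obstacle. If the underlying functor of $F$ were an equivalence, $F$ would automatically be invertible in $\on{Sph}_{\Gcheck}^{\on{spec}}\on{-FactModCat}_{x_0}$, because compatibility data pass to inverses. No pointwise monadicity is needed. Nor is Proposition \ref{alg approximation}, which in any case only controls eventually coconnective parts.

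The real content is showing that each $F_{X^n}$ is an equivalence. Your stratification plus recollement has the right shape for this. On $U^n\setminus\Delta_n$, $F$ is $\on{id}\otimes F_{x_0}$ by factorization, and the closed strata $\Delta_n\cup(X^n\setminus U^n)$ are handled by induction.

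The gap is the sentence asserting that $F$ intertwines the gluing functors $i^*j_*$ ``because they are built from the factorization structure.'' That is exactly where the proof lives. A morphism of crystals of categories only commutes laxly with $i^*j_*$. If that $2$-cell were always invertible, your argument would show that every morphism of factorization module categories that is an equivalence at $x_0$ is an equivalence. That is not available in general: the paper obtains it for $\on{Rep}(\Gcheck)$-modules only under a fusability hypothesis.

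The missing idea is to compare with $\on{FactRes}(F)$. Form the cube built from the two lax squares and the comparison functors $\on{FactRes}(\mathcal{M})\to\mathcal{M}$ and $\on{FactRes}(\mathcal{N})\to\mathcal{N}$. The top face commutes strictly because $\on{FactRes}(F)$ is an equivalence. On the open stratum, the comparison functor $\on{nv}^{\otimes n}\otimes\on{id}$ has essential image that generates $\mathcal{M}_{U^n\setminus\Delta_n}$. Hence the lax $2$-cell for $F$ is invertible on a generating set, and therefore everywhere. This is where generation by $\on{nv}$ actually enters, rather than in a pointwise statement about monads.
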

\begin{proof}
    Let 
    $$
    F : \mathcal{M} \longrightarrow \mathcal{N}
    $$
    be a morphism of factorization $\on{Sph}_{\Gcheck}^{\on{spec}}$-module categories such that 
    $$
    \on{FactRes}_{\on{Rep}(\Gcheck) \to \on{Sph}_{\Gcheck}^{\on{spec}}}(F) : \on{FactRes}_{\on{Rep}(\Gcheck) \to \on{Sph}_{\Gcheck}^{\on{spec}}}(\mathcal{M}) \longrightarrow \on{FactRes}_{\on{Rep}(\Gcheck) \to \on{Sph}_{\Gcheck}^{\on{spec}}} (\mathcal{N})
    $$
    is an equivalence of factorization $\on{Rep}(\Gcheck)$-module categories.
    We want to show that $F$ is an equivalence. It suffices to show that it is an equivalence as a morphism of sheaves of categories on $\Ran_{x_0}$, and to do so, it suffices to show that for each $n \in \mathbb{N}$,
    $$
    F_{X^n} : \mathcal{M}_{X^n} \longrightarrow \mathcal{N}_{X^n}
    $$
    is an equivalence.
    
    Let us proceed inductively: the $n = 0$ case is immediate since
    $$
    F_{x_0} : \mathcal{M}_{x_0} \longrightarrow \mathcal{N}_{x_0}
    $$
    just identifies with
    $$
    \on{FactRes}_{\on{Rep}(\Gcheck) \to \on{Sph}_{\Gcheck}^{\on{spec}}}(F)_{x_0} : \on{FactRes}_{\on{Rep}(\Gcheck) \to \on{Sph}_{\Gcheck}^{\on{spec}}}(\mathcal{M})_{x_0} \overset{\simeq}{\longrightarrow} \on{FactRes}_{\on{Rep}(\Gcheck) \to \on{Sph}_{\Gcheck}^{\on{spec}}} (\mathcal{N})_{x_0}.
    $$
    Next, we will assume the inductive hypothesis for all $m < n$ and prove it for $n$. Denoting $U = X \setminus x_0$, note that
    $$
    X^n = (U^n \setminus \Delta_n) \sqcup (\Delta_n \cup (X^n \setminus U^n)).
    $$
    By factorization,
    $$
    F_{U^n \setminus \Delta_n} : \mathcal{M}_{U^n \setminus \Delta_n} \longrightarrow \mathcal{N}_{U^n \setminus \Delta_n}
    $$
    identifies with the equivalence
    $$
    \on{id}_{\on{Sph}_{\Gcheck}^{\on{spec}}} \otimes F_{x_0} : \on{Sph}_{\Gcheck}^{\on{spec} \otimes n} \otimes \Dmod(U^n \setminus \Delta_n) \otimes \mathcal{M}_{x_0} \overset{\simeq}{\longrightarrow}  \on{Sph}_{\Gcheck}^{\on{spec} \otimes n} \otimes \Dmod(U^n \setminus \Delta_n) \otimes \mathcal{N}_{x_0}.
    $$
    On the other hand, $\Delta_n \cup (X^n \setminus U^n)$ is a union of hyperplanes $X^{n-1}$ (in a way where the maps to $\Ran_{x_0}$ are compatible), so by the inductive hypothesis,
    $$
    F_{\Delta_n \cup (X^n \setminus U^n)} : \mathcal{M}_{\Delta_n \cup (X^n \setminus U^n)} \longrightarrow \mathcal{N}_{\Delta_n \cup (X^n \setminus U^n)}
    $$
    is an equivalence.
    Invoking the above lemma, it suffices to show that the lax commutative diagram
    $$
    \begin{tikzcd}
        {\mathcal{N}_{U^n \setminus \Delta_n}} && {\mathcal{N}_{\Delta_n \cup (X^n \setminus U^n)}} \\
	\\
	{\mathcal{M}_{U^n \setminus \Delta_n}} && {\mathcal{M}_{\Delta_n \cup (X^n \setminus U^n)}}
	\arrow["{i^*j_*}", from=1-1, to=1-3]
	\arrow["\simeq", from=3-1, to=1-1]
	\arrow["{i^*j_*}"', from=3-1, to=3-3]
	\arrow[between={0.4}{0.6}, Rightarrow, from=3-3, to=1-1]
	\arrow["\simeq"', from=3-3, to=1-3]
    \end{tikzcd}
    $$
    is strictly commutative. However, this follows from the cube
    $$
    \begin{tikzcd}
        & {\on{FactRes}(\mathcal{N})_{U^n \setminus \Delta_n}} && {\on{FactRes}(\mathcal{N})_{\Delta_n \cup (X^n \setminus U^n)}} \\
	{\on{FactRes}(\mathcal{M})_{U^n \setminus \Delta_n}} && {\on{FactRes}(\mathcal{M})_{\Delta_n \cup (X^n \setminus U^n)}} \\
	& {\mathcal{N}_{U^n \setminus \Delta_n}} && {\mathcal{N}_{\Delta_n \cup (X^n \setminus U^n)}} \\
	{\mathcal{M}_{U^n \setminus \Delta_n}} && {\mathcal{M}_{\Delta_n \cup (X^n \setminus U^n)}}
	\arrow[from=1-2, to=1-4]
	\arrow[from=1-2, to=3-2]
	\arrow[from=1-4, to=3-4]
	\arrow["\simeq", from=2-1, to=1-2]
	\arrow[from=2-1, to=2-3]
	\arrow[from=2-1, to=4-1]
	\arrow["\simeq"', between={0.4}{0.6}, Rightarrow, from=2-3, to=1-2]
	\arrow["\simeq", from=2-3, to=1-4]
	\arrow[from=2-3, to=4-3]
	\arrow[from=3-2, to=3-4]
	\arrow["\simeq", from=4-1, to=3-2]
	\arrow[from=4-1, to=4-3]
	\arrow[between={0.4}{0.6}, Rightarrow, from=4-3, to=3-2]
	\arrow["\simeq", from=4-3, to=3-4]
    \end{tikzcd}
    $$
    because the top square commutes strictly and the vertical arrow
    $$
    \on{FactRes}_{\on{Rep}(\Gcheck) \to \on{Sph}_{\Gcheck}^{\on{spec}}}(\mathcal{M})_{U^n \setminus \Delta_n} \longrightarrow \mathcal{M}_{U^n \setminus \Delta_n}
    $$
    identifies with
    $$
    \on{Rep}(\Gcheck)^{\otimes n} \otimes \Dmod(U^n \setminus \Delta_n) \otimes \mathcal{M}_{x_0} \longrightarrow \on{Sph}_{\Gcheck}^{\on{spec} \otimes n} \otimes \Dmod(U^n \setminus \Delta_n) \otimes \mathcal{M}_{x_0}
    $$
    which generates the target.
\end{proof}

\begin{rem}
    Although it is not recorded anywhere in the literature, $\on{FactRes}$ is conservative more generally along any factorization functor $\mathcal{C} \longrightarrow \mathcal{D}$ with a continuous right adjoint. We will not prove this fact since it is not necessary for our argument.
\end{rem}

\sssec{}
Suppose 
$$
F : \mathcal{M}_1 \longrightarrow \mathcal{M}_2
$$
is a morphism in $\on{Sph}_{\Gcheck}^{\on{spec}}\on{-FactModCat}_{x_0}$.

If
$$
\on{FactRes}_{\on{Rep}(\Gcheck) \to \on{Sph}_{\Gcheck}^{\on{spec}}}(\mathcal{M}_1)
$$
and
$$
\on{FactRes}_{\on{Rep}(\Gcheck) \to \on{Sph}_{\Gcheck}^{\on{spec}}}(\mathcal{M}_2)
$$
are fusable as factorization $\on{Rep}(\Gcheck)$-module categories, then to show that $F$ is an equivalence, it suffices to show that
$$
\on{FactRes}_{\on{Rep}(\Gcheck) \to \on{Sph}_{\Gcheck}^{\on{spec}}}(F)_{x_0} = F_{x_0}
$$
is an equivalence.

\sssec{}
Furthermore, if $\mathcal{M}$ is a factorization $\on{Sph}_{\Gcheck}^{\on{spec}}$-module category, there is a factorization $\on{Sph}_{\Gcheck}^{\on{spec}}$-submodule category
$$
\mathcal{M}_{\on{Sph}_{\Gcheck}^{\on{spec}}\on{-QLisse}} \subset \mathcal{M}
$$
defined parallel to the $\on{Rep}(\Gcheck)$ case, i.e.
\begin{align}
\mathcal{M}_{\on{Sph}_{\Gcheck}^{\on{spec}}\on{-QLisse}}|_{U^n \setminus \Delta_n} &\simeq (\on{Sph}_{\Gcheck}^{\on{spec} \otimes n} \otimes \on{QLisse}(U)^{\otimes n} \otimes \mathcal{M}_{x_0})|_{U^n \setminus \Delta_n} \\
&\subset (\on{Sph}_{\Gcheck}^{\on{spec} \otimes n} \otimes \Dmod(U)^{\otimes n} \otimes \mathcal{M}_{x_0})|_{U^n \setminus \Delta_n} \simeq \mathcal{M}|_{U^n \setminus \Delta_n}.
\end{align}

If $\on{FactRes}_{\on{Rep}(\Gcheck) \to \on{Sph}_{\Gcheck}^{\on{spec}}}(\mathcal{M})$ is fusable, then constructing a factorization functor between factorization $\on{Sph}_{\Gcheck}^{\on{spec}}$-module categories with source $\mathcal{M}_{\on{Sph}_{\Gcheck}^{\on{spec}}\on{-QLisse}}$ is sufficient to construct one with source $\mathcal{M}$.

\sssec{}
Here is a result that will appear in \cite{BCG} that recontextualizes our discussion so far:

\begin{prop}
    Let
    $$
    \on{Inertia} = \mathcal{Y} \underset{\mathcal{Y} \times \mathcal{Y}}{\times} \mathcal{Y}
    $$
    denote the inertia stack of $\mathcal{Y} = \on{LS}_{\Gcheck}^{\on{restr}}(D_{x_0}^{\times})$.
    
    There is a fully faithful (2-)functor
    $$
    \on{Fact}^{\on{inertia}} : \on{QCoh}(\on{Inertia})\on{-ModCat} \longrightarrow \on{Sph}_{\Gcheck}^{\on{spec}}\on{-FactModCat}_{x_0}
    $$
    whose essential image consists precisely of factorization $\on{Sph}_{\Gcheck}^{\on{spec}}$-module categories whose factorization restriction to $\on{Rep}(\Gcheck)$ are fusable.
\end{prop}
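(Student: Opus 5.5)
The plan is to build $\on{Fact}^{\on{inertia}}$ from Bogdanova's $\on{Fact}^{\on{restr}}$ together with the pointwise description of $\on{Sph}_{\Gcheck}^{\on{spec}}$ near $x_0$. Over the restricted locus, the spectral spherical Hecke stack at $x_0$ is $\on{LS}^{\on{reg}} \times_{\on{LS}^{\on{mer}}} \on{LS}^{\on{reg}}$. After base change along $\on{LS}_{\Gcheck}^{\on{restr}}(D_{x_0}^{\times}) \to \on{LS}_{\Gcheck}(D_{x_0}^{\times})$, the way this Hecke stack "acts" on a module over $\QCoh(\mathcal{Y})$ should be governed by convolution over $\mathcal{Y} \times_{\mathcal{Y}\times\mathcal{Y}} \mathcal{Y}$. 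The reason is that a Hecke modification at a moving point $x$ colliding with $x_0$ changes the local system on $D_{x_0}^{\times}$ by an automorphism, i.e. a point of the inertia.

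\textbf{Construction.} For a $\QCoh(\on{Inertia})$-module $\boldsymbol{M}$, we define
$$
\on{Fact}^{\on{inertia}}(\boldsymbol{M}) = \QCoh(\on{LS}_{\Gcheck}(D_{\Ran_{x_0}}^{\times})) \underset{\QCoh(\on{LS}_{\Gcheck}(D_{x_0}^{\times}))}{\otimes} \boldsymbol{M},
$$
where $\boldsymbol{M}$ is regarded as a $\QCoh(\mathcal{Y})$-module via the unit section $\mathcal{Y} \to \on{Inertia}$. To make this a factorization $\on{Sph}_{\Gcheck}^{\on{spec}}$-module category, we construct a factorizable monoidal functor from $\on{Sph}_{\Gcheck}^{\on{spec}}$, restricted near $x_0$, to the relative convolution category $\QCoh(\on{Inertia})$. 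This is a spectral Hecke correspondence, whose fiber at $x \neq x_0$ is trivial after tensoring with $\QCoh(\mathcal{Y})$. We then check that the factorization restriction along $\on{nv}$ recovers $\on{Fact}^{\on{restr}}(\on{oblv}\,\boldsymbol{M})$, which is fusable. This gives one inclusion of essential images.

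\textbf{Full faithfulness.} We use the conservativity proposition above and Bogdanova's full faithfulness. Mapping spaces in $\on{Sph}_{\Gcheck}^{\on{spec}}\on{-FactModCat}_{x_0}$ are computed as mapping spaces in $\on{Rep}(\Gcheck)\on{-FactModCat}_{x_0}$ together with compatibility data for the extra $\on{Sph}_{\Gcheck}^{\on{spec}}$-action. Since $\on{nv}$ generates $\on{Sph}_{\Gcheck}^{\on{spec}}$ under the Hecke action, these data form a module structure over a monad. On the fusable side, this monad is identified with $\QCoh(\mathcal{Y} \times_{\mathcal{Y}\times\mathcal{Y}} \mathcal{Y})$ acting on $\QCoh(\mathcal{Y})$-modules. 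The identification goes through Barr--Beck applied to $\QCoh(\mathcal{Y})\on{-ModCat} \to \QCoh(\on{Inertia})\on{-ModCat}$ along the unit section.

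\textbf{Essential surjectivity.} This is the main obstacle: for $\mathcal{M}$ with fusable restriction, we must produce the inertia-module structure on $\mathcal{M}_{x_0}$. I would first use fusability to write $\on{FactRes}(\mathcal{M}) \simeq \on{Fact}^{\on{restr}}(\boldsymbol{M})$. Next, I would restrict to $X$ and use the $\on{QLisse}$ submodule category to compute the nearby-cycles-type functor $i^{\bullet}j_{\bullet}$ for $\on{Sph}_{\Gcheck}^{\on{spec}} \otimes \on{QLisse}(U) \otimes \mathcal{M}_{x_0} \to \mathcal{M}_{x_0}$. The claim to check is that this action factors through $\QCoh(\on{Inertia})$, via the monodromy automorphism of restricted local systems. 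The higher-arity coherences are then handled by the same inductive recollement argument as in the conservativity proposition.
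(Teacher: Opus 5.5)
The paper does not prove this proposition. It quotes it as a result to appear in forthcoming work of Bogdanova, Chen and Gaitsgory, so there is no argument here to compare yours with. Taken on its own terms, your proposal has a genuine gap already in the construction.

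\textbf{The construction produces the wrong sheaf.} You set $\on{Fact}^{\on{inertia}}(\boldsymbol{M}) = \QCoh(\on{LS}_{\Gcheck}(D_{\Ran_{x_0}}^{\times})) \otimes_{\QCoh(\on{LS}_{\Gcheck}(D_{x_0}^{\times}))} \boldsymbol{M}$. This is just the sheaf underlying $\on{Fact}^{\on{restr}}$ of $\boldsymbol{M}$, and you then try to equip it with an extra $\on{Sph}_{\Gcheck}^{\on{spec}}$-structure. That cannot work.

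At $\underline{x} = \{x, x_0\}$ with $x \neq x_0$, the stack $\on{LS}_{\Gcheck}(D^{\times}_{\Ran_{x_0}})$ is $\on{LS}^{\on{reg}}_{\Gcheck, x} \times \on{LS}_{\Gcheck}(D^{\times}_{x_0})$. So your sheaf has fiber $\on{Rep}(\Gcheck)_x \otimes \boldsymbol{M}$ there. The axiom $(\mathcal{C} \boxtimes \mathcal{M})|_{disj} \simeq \cup^* \mathcal{M}$ with $\mathcal{C} = \on{Sph}_{\Gcheck}^{\on{spec}}$ instead forces the fiber $\on{Sph}^{\on{spec}}_{\Gcheck, x} \otimes \boldsymbol{M}$.

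A factorization $\on{Sph}_{\Gcheck}^{\on{spec}}$-module structure is therefore not extra structure on a factorization $\on{Rep}(\Gcheck)$-module category. Factorization restriction along $\on{nv}$ changes the underlying sheaf, as the paper's conservativity proof makes explicit. On $U^n \setminus \Delta_n$ the comparison is $\on{Rep}(\Gcheck)^{\otimes n} \otimes \Dmod(U^n\setminus\Delta_n) \otimes \mathcal{M}_{x_0} \to \on{Sph}_{\Gcheck}^{\on{spec} \otimes n} \otimes \Dmod(U^n\setminus\Delta_n) \otimes \mathcal{M}_{x_0}$. Your remark that the Hecke correspondence is ``trivial at $x \neq x_0$ after tensoring with $\QCoh(\mathcal{Y})$'' is exactly where the argument breaks.

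What is missing is a genuinely different sheaf of categories on $\Ran_{x_0}$. It should have fibers $\on{Sph}_{\Gcheck}^{\on{spec} \otimes n} \otimes \boldsymbol{M}$ on the open strata and $\boldsymbol{M}$ at $x_0$. You also need an explanation of why its gluing across the collision of a moving point with $x_0$ is controlled exactly by a $\QCoh(\on{Inertia})$-module structure on $\boldsymbol{M}$.

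Your closing idea is plausibly the right mechanism. Moving $x$ around $x_0$ acts through the canonical monodromy automorphism of a restricted local system, which is a section of the inertia. But that idea has to go into this construction, not be invoked afterwards.

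A smaller point concerns the monoidal structure. If $\QCoh(\on{Inertia})$ carries its pointwise tensor product, a module over it becomes a $\QCoh(\mathcal{Y})$-module by pullback along the projection $\on{Inertia} \to \mathcal{Y}$. Pullback along the unit section goes in the opposite direction. If you mean convolution, you should say so.

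\textbf{The other two steps assume the result.} For full faithfulness, the paper only shows that $\on{FactRes}$ along $\on{nv}$ is conservative. Conservativity gives neither faithfulness on $1$-morphisms nor monadicity. To describe morphisms of $\on{Sph}_{\Gcheck}^{\on{spec}}$-modules as morphisms of the restricted $\on{Rep}(\Gcheck)$-modules plus algebra data for a monad, you would need a $2$-categorical Barr--Beck argument. That requires a left adjoint (e.g.\ factorization induction) and control of the relevant colimits, neither of which you supply. The identification of that monad with the action of $\QCoh(\on{Inertia})$ on the fusable part is simply asserted, although it is the heart of the theorem.

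Likewise, in the essential-surjectivity step, ``the claim to check is that this action factors through $\QCoh(\on{Inertia})$'' is the theorem itself. The recollement induction from the conservativity proposition only detects whether an already-constructed functor is an equivalence. It cannot produce the higher coherence data of a factorization module structure.
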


\begin{defn}
    Since there are (obviously compatible) maps
    $$
    (\ncheck/\Bcheck)_{x_0} \longrightarrow \on{LS}_{\Gcheck}^{\on{restr}}(D_{x_0}^{\times})
    $$
    and
    $$
    \gcheck/\Gcheck \longrightarrow \on{LS}_{\Gcheck}^{\on{restr}}(D_{x_0}^{\times}) \longrightarrow \on{LS}_{\Gcheck}^{\on{restr}}(D_{x_0}^{\times}) \times \on{LS}_{\Gcheck}^{\on{restr}}(D_{x_0}^{\times}),
    $$
    we can define the factorization $\on{Sph}_{\Gcheck}^{\on{spec}}$-module category $\on{Aff}_{\Gcheck}^{\on{spec}}$ to be
    $$
    \on{Fact}^{\on{inertia}}(\on{IndCoh}((\ncheck/\Bcheck)_{x_0} \underset{\gcheck/\Gcheck}{\times} (\ncheck/\Bcheck)_{x_0})).
    $$
\end{defn}

%\sssec{}
%In the next subsection, we will use this latter fact to construct the morphism of pairs $\on{B}$, and in the following subsection, we will use the former observation to argue that it is an equivalence of pairs. To do so, we will need the following

\begin{prop}
    We may think of $\on{Aff}_{G}$ as a factorization $\on{Sph}_{\Gcheck}^{\on{spec}}$-module category via the equivalence of factorization categories $\on{Sat}_G$. Then $\on{FactRes}_{\on{Rep}(\Gcheck) \to \on{Sph}_{\Gcheck}^{\on{spec}}}(\on{Aff}_{G})$ is fusable.
\end{prop}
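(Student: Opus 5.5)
We will follow the same route as for the fusability of $\on{Whit}^!(\on{Fl}_G)$: reduce to the combinatorial characterization of fusability sketched in the remark above. First we identify the restricted module structure. The factorization restriction of $\on{Aff}_G$ along $\on{nv} : \on{Rep}(\Gcheck) \to \on{Sph}_{\Gcheck}^{\on{spec}}$, transported through $\on{Sat}_G$, is the $\on{Rep}(\Gcheck)$-module structure on $\on{Aff}_G$ coming from the naive Satake functor $\on{Rep}(\Gcheck) \to \on{Sph}_G$. This identification uses the compatibility of $\on{Sat}_G$ with $\on{nv}$ and the naive functor recorded in \cite{GLC2}. Consequently, the action of $\on{Rep}(\Gcheck)$ on $\on{Aff}_G$ is convolution, at a moving point $x$, with spherical objects on $\on{Gr}_{G,x}$ coming from representations, acting on the $I$-biequivariant sheaves at $x_0$.

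Next we restrict to one copy of the curve and compute $i^{\bullet}j_{\bullet}$. On $X$, the relevant space is the Beilinson--Drinfeld family $\on{Fl}' \to X$ of \cite{Braiding}, now taken modulo $I$ on both sides, i.e. $I\backslash \on{Fl}'$. Over $U$ it is $\on{Gr}_G \times U \times \on{Fl}_G$, and at $x_0$ it degenerates to $\on{Fl}_G$. The functor
$$
\on{Rep}(\Gcheck) \otimes \on{QLisse}(U) \otimes (\on{Aff}_G)_{x_0} \longrightarrow \on{Pro}((\on{Aff}_G)_{x_0})
$$
is given by nearby cycles along the degeneration (Gaitsgory's central sheaves construction), convolved with the $x_0$-object. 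For a lisse object $\mathcal{L}$ on $U$ we expect the output to be an honest object of $(\on{Aff}_G)_{x_0}$ rather than a pro-object. It should be the monodromic nearby cycles $\Psi(V \boxtimes \mathcal{L})$, i.e. the central sheaf $Z_V$ twisted by the unipotent monodromy determined by $\mathcal{L}$, convolved with the given object. This is the same mechanism that produced the fusability of $\on{Dmod}(\on{Fl}_G)$ in the $\on{Whit}$ case. We will argue that the $I$-biequivariant case follows from that one by taking $I$-invariants for the right action, which commutes with the relevant constructions since $I$ is pro-unipotent over $T$. Alternatively, we will apply the same nearby-cycles argument directly to $I\backslash \on{Fl}'/I$.

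We expect the main obstacle to be the renormalization. $\on{Aff}_G$ is the ind-completion of objects with compact $!$-pullback to $LG/I$, not the naive $\Dmodh(I\backslash LG/I)$. We therefore must check two things: that convolution with central sheaves and their monodromic variants preserves this renormalized compact subcategory, and that $i^{\bullet}j_{\bullet}$ computed in the renormalized category agrees with the unrenormalized computation. We would handle this by noting that both functors are continuous and commute with the forgetful functor to $\Dmodh(LG/I)$. Compactness is then checked after pullback to $LG/I$, where it reduces to the $\Dmod(\on{Fl}_G)$ statement already used above. Finally, we would note that the conditions over higher powers $X^n$ reduce to the $n=1$ case by factorization and iteration of the same nearby-cycles argument, deferring the precise formulation to \cite{BCG}.
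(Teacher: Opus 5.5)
Your proposal is essentially the paper's argument. The paper likewise obtains fusability of $\on{Aff}_G$ from that of $\Dmod(\on{Fl}_G)$, which it gets from the family $\on{Fl}'$ of \cite{Braiding} together with convolution against naive-Satake objects, and then passes to Iwahori invariants; your nearby-cycles (central sheaf) description of $i^{\bullet}j_{\bullet}$ and your renormalization check elaborate what the paper's one-line proof leaves implicit rather than taking a different route.
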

\begin{proof}
    As with $\on{Whit}^!(\on{Fl}_G)$, we agian use the fusability of $\Dmod(\on{Fl}_G)$, but this time we take invariants with respect to the Iwahori.
\end{proof}

\sssec{}
Furthermore, by construction,

\begin{prop}
    $\on{FactRes}_{\on{Rep}(\Gcheck) \to \on{Sph}_{\Gcheck}^{\on{spec}}}(\on{Aff}_{\Gcheck}^{\on{spec}})$ is fusable.
\end{prop}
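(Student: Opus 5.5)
The plan is to unwind the definition of $\on{Aff}_{\Gcheck}^{\on{spec}}$ as $\on{Fact}^{\on{inertia}}$ of the plain category
$$
\boldsymbol{A} = \on{IndCoh}((\ncheck/\Bcheck)_{x_0} \underset{\gcheck/\Gcheck}{\times} (\ncheck/\Bcheck)_{x_0}),
$$
regarded as a $\on{QCoh}(\on{Inertia})$-module category through the compatible maps of the definition. The proposition of \cite{BCG} recalled above identifies the essential image of $\on{Fact}^{\on{inertia}}$ with exactly those factorization $\on{Sph}_{\Gcheck}^{\on{spec}}$-module categories whose factorization restriction to $\on{Rep}(\Gcheck)$ is fusable. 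Since $\on{Aff}_{\Gcheck}^{\on{spec}}$ lies in this essential image by definition, the claim follows immediately.

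To make the argument less dependent on the black box, I would also check directly how $\on{FactRes}_{\on{Rep}(\Gcheck) \to \on{Sph}_{\Gcheck}^{\on{spec}}} \circ \on{Fact}^{\on{inertia}}$ behaves. The expected compatibility is that this composite agrees with $\on{Fact}^{\on{restr}}$ precomposed with restriction of scalars along the monoidal functor
$$
\QCoh(\on{LS}_{\Gcheck}^{\on{restr}}(D_{x_0}^{\times})) \longrightarrow \QCoh(\on{Inertia}).
$$
This functor is pullback along the projection $\on{Inertia} \to \mathcal{Y}$. Concretely, $\on{Rep}(\Gcheck)$ maps into $\on{Sph}_{\Gcheck}^{\on{spec}}$ via $\on{nv}$, which corresponds to $\mathcal{O}$ of the diagonal/unit. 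So restricting the inertia action to $\on{Rep}(\Gcheck)$ only remembers the $\QCoh(\mathcal{Y})$-module structure. With this compatibility, $\on{FactRes}(\on{Aff}_{\Gcheck}^{\on{spec}}) \simeq \on{Fact}^{\on{restr}}(\boldsymbol{A})$, which is fusable by definition. This parallels the proof that $\on{QCoh}(\ncheck/\Bcheck)$ is fusable.

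The main obstacle is this compatibility of $\on{Fact}^{\on{inertia}}$ with factorization restriction along $\on{nv}$. It depends on the construction in \cite{BCG}, which is not spelled out here, and the assertion about the essential image is exactly where it is encoded. I would therefore rest the proof on that proposition. As a sanity check, I would verify that on $U^n \setminus \Delta_n$ both sides give $\on{Rep}(\Gcheck)^{\otimes n} \otimes \Dmod(U^n\setminus\Delta_n) \otimes \boldsymbol{A}$, and that the fiber at $x_0$ is $\boldsymbol{A}$ in both cases.
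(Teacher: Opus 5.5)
Your proposal is correct and is the paper's argument. The paper says only that the claim holds \emph{by construction}: $\on{Aff}_{\Gcheck}^{\on{spec}}$ is defined as $\on{Fact}^{\on{inertia}}$ of a plain $\QCoh(\on{Inertia})$-module category, and the \cite{BCG} proposition identifies the essential image of $\on{Fact}^{\on{inertia}}$ with exactly those factorization $\on{Sph}_{\Gcheck}^{\on{spec}}$-module categories whose restriction to $\on{Rep}(\Gcheck)$ is fusable. Your supplementary compatibility check is not needed. One small point on it: if $\QCoh(\on{Inertia})$ carries the convolution structure matching $\on{Sph}_{\Gcheck}^{\on{spec}}$, the monoidal functor corresponding to $\on{nv}$ is pushforward along the unit section, not pullback along the projection, although both give the same $\QCoh(\mathcal{Y})$-module structure.
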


\subsection{Construction of B} \label{Construction of B}
In this subsection, we will use the equivalence of pairs $\on{AB}$ to construct the morphism of pairs
$$
    \on{B} : (\on{Sph}_G, \on{Aff}_G) \longrightarrow (\on{Sph}_{\Gcheck}^{\on{spec}}, \on{Aff}_{\Gcheck}^{\on{spec}})
$$
in $\FactModCat$.

\sssec{}
There is a monoidal action
$$
    (\on{Sph}_G, \on{Aff}_G) \otimes (\on{Whit}^!(G), \on{Whit}^!(\on{Fl}_G)) \to (\on{Whit}^!(G), \on{Whit}^!(\on{Fl}_G))
$$
which can be written as a monoidal functor
$$
    (\on{Sph}_G, \on{Aff}_G) \to (\on{Whit}^!(G), \on{Whit}^!(\on{Fl}_G)) \otimes (\on{Whit}^*(G), \on{Whit}^*(\on{Fl}_G)) \simeq (\on{Rep}(\Gcheck), \operatorname{QCoh}(\ncheck/\Bcheck))^{\otimes 2}.
$$

\sssec{}
The image of the factorization (and monoidal) unit in $\on{Sph}_G$ under
$$
\on{Sph}_G \to \on{Rep}(\Gcheck)^{\otimes 2}
$$
is the factorization algebra $R_{\Gcheck}$, the regular representation of $\Gcheck$ considered as a representation of the group $\Gcheck \times \Gcheck$. Geometrically, this is the pushforward of
$$
\mathcal{O}_{\on{LS}_{\Gcheck}^{\on{reg}}}
$$
under
$$
\on{LS}_{\Gcheck}^{\on{reg}} \to \on{LS}_{\Gcheck}^{\on{reg}} \times \on{LS}_{\Gcheck}^{\on{reg}}.
$$

\sssec{}
Thus, we obtain a monoidal morphism of pairs 
$$
(\on{Sph}_G, \on{Aff}_G) \to R_{\Gcheck}\on{-FactMod}[(\on{Rep}(\Gcheck), \operatorname{QCoh}(\ncheck/\Bcheck))^{\otimes 2}].
$$
By Theorem \ref{approximation} (taking $\mathcal{Y}_0 = \on{pt}/\Gcheck^2 \times X_{dR}$, $\mathcal{Y} = \on{pt}/\Gcheck \times X_{dR}$, and $\mathcal{S}_0 = (\ncheck/\Bcheck_{\Ran_{x_0}})^2$), there is a morphism
$$
(\on{Sph}_{\Gcheck}^{\on{spec}}, \on{Aff}_{\Gcheck}^{\on{spec}}) \to R_{\Gcheck}\on{-FactMod}[(\on{Rep}(\Gcheck), \operatorname{QCoh}(\ncheck/\Bcheck))^{\otimes 2}]
$$
that is an equivalence on the eventually coconnective part of each component.

\sssec{}
As when we were constructing $\on{AB}$, we would like to say that $\on{Aff}_{G}$ is compactly generated and that its compact generators are sent to eventually coconnective objects. However, it is unlikely that $\on{Aff}_{G}$ is compactly generated. Again, we are saved by fusability, since 
$$
\on{Aff}_{G, {\on{Sph}_{\Gcheck}^{\on{spec}}\on{-QLisse}}} \subset \on{Aff}_G
$$
\emph{is} compactly generated by compact generators that are sent to
$$
R_{\Gcheck}\on{-FactMod}(\operatorname{QCoh}(\ncheck/\Bcheck)^{\otimes 2})^{> -\infty}.
$$
Using fusability of $\on{FactRes}_{\on{Rep}(\Gcheck) \to \on{Sph}_{\Gcheck}^{\on{spec}}}(\on{Aff}_{G})$, we are able to lift the resulting functor
$$
\on{Aff}_{G, {\on{Sph}_{\Gcheck}^{\on{spec}}\on{-QLisse}}} \longrightarrow \on{Aff}_{\Gcheck}^{\on{spec}}
$$
to all of $\on{Aff}_{G}$, and we obtain our desired morphism
$$
\on{B} : (\on{Sph}_G, \on{Aff}_{G}) \longrightarrow (\on{Sph}_{\Gcheck}^{\on{spec}}, \on{Aff}_{\Gcheck}^{\on{spec}})
$$
in $\FactModCat$.

\subsection{B is a Pointwise Equivalence} \label{B is a Pointwise Equivalence}

\sssec{}
After applying the appropriate factorization restrictions, $\on{Aff}_G$ and $\on{Aff}_{\Gcheck}^{\on{spec}}$ are fusable as factorization $\on{Rep}(\Gcheck)$-module categories. Thus, in order to show that $\on{B}$ is an equivalence, it suffices to show that 
$$
\on{B}_{x_0} : \on{Aff}_{G, x_0} \longrightarrow \on{Aff}_{\Gcheck, x_0}^{\on{spec}}
$$ is an equivalence of categories.

\sssec{}
In \cite{DT}, Dhillon and Taylor construct a version of (pointwise) Bezrukavnikov's equivalence in the monodromic, Betti setting. In other words, there are two key differences from our setting. First, the stack on the geometric side is
$$
\tilde{\on{Fl}}_G/I_0 = I_0 \backslash LG / I_0
$$
instead of
$$
\on{Fl}_G/I = I \backslash LG/I.
$$
Second, the sheaf theory on the geometric side is ind-constructible sheaves with nilpotent singular support
$$
\on{Shv}_{\on{Nilp}}
$$
instead of
$$
\Dmod
$$
and correspondingly, the spaces that appear on the spectral side are the \emph{Betti} versions.

\sssec{}
Let us summarize the construction in \cite{DT} (or, rather, the \emph{analogue} of their construction in our unipotent monodromy, de Rham setting). The monoidal category
$$
\Dmod(\on{Fl}_{G})^{(I, \on{ren})}
$$
has an obvious action on the category
$$
\on{Whit}^!(\on{Fl}_G)
$$
which can be identified via $\on{AB}_{x_0}$ with
$$
\QCoh(\tilde{\check{\mathcal{N}}}/\Gcheck).
$$
Furthermore, the action of $\Dmod(\on{Fl}_{G})^{(I, \on{ren})}$ on $\on{Whit}^!(\on{Fl}_G)$ has a tautological datum of compatibility with the action of
$$
\on{bi-Whit}(LG) = \on{Whit}^*(\on{Whit}^!(LG))
$$
on $\on{Whit}^!(\on{Fl}_G)$. However, the latter action can be identified with the action of
$$
\QCoh(\gcheck/\Gcheck)
$$
on $\QCoh(\tilde{\check{\mathcal{N}}}/\Gcheck)$ via pullback. Thus, we obtain a monoidal functor
$$
\iota^! : \Dmod(\on{Fl}_{G})^{(I, \on{ren})} \longrightarrow \on{End}_{\QCoh(\gcheck/\Gcheck)}(\QCoh(\tilde{\check{\mathcal{N}}}/\Gcheck)) \simeq \QCoh(\tilde{\check{\mathcal{N}}}/\Gcheck \underset{\gcheck/\Gcheck}{\times} \tilde{\check{\mathcal{N}}}/\Gcheck).
$$

\sssec{}
Then, because of Morita-theoretic properties of $\QCoh(\tilde{\check{\mathcal{N}}}/\Gcheck)$ as a $\Dmod(\on{Fl}_{G})^{(I, \on{ren})}$-$\QCoh(\gcheck/\Gcheck)$ bimodule, $\iota^!$ has a fully faithful left adjoint $\iota_!$. Furthermore, the kernel of $\iota^!$ consists precisely of the infinitely connective objects of $\Dmod(\on{Fl}_{G})^{(I, \on{ren})}$, so we obtain an equivalence
$$
\Dmod(\on{Fl}_{G})^{(I, \on{ren})}/\Dmod(\on{Fl}_{G})^{(I, \on{ren}), \leq -\infty} \overset{\simeq}{\longrightarrow} \QCoh(\tilde{\check{\mathcal{N}}}/\Gcheck \underset{\gcheck/\Gcheck}{\times} \tilde{\check{\mathcal{N}}}/\Gcheck).
$$

\sssec{}
Finally, we renormalize as follows. On the spectral side, the full subcategory of pseudocompact objects is
$$
\on{Psc}(\QCoh(\tilde{\check{\mathcal{N}}}/\Gcheck \underset{\gcheck/\Gcheck}{\times} \tilde{\check{\mathcal{N}}}/\Gcheck)) = \on{Coh}(\tilde{\check{\mathcal{N}}}/\Gcheck \underset{\gcheck/\Gcheck}{\times} \tilde{\check{\mathcal{N}}}/\Gcheck).
$$
On the other hand, on the geometric side we have an equivalence
$$
\on{Psc}(\Dmod(\on{Fl}_{G})^{(I, \on{ren})}) \simeq \on{Psc}(\Dmod(\on{Fl}_{G})^{(I, \on{ren})}/\Dmod(\on{Fl}_{G})^{(I, \on{ren}), \leq -\infty})
$$
and the pseudocompact objects in $\Dmod(\on{Fl}_{G})^{(I, \on{ren})}$ are precisely the compact objects (i.e. objects of $\Dmod(\on{Fl}_{G})^{I}$ that are compact after forgetting down to $\Dmod(\on{Fl}_G)$ on either side). Thus, we obtain an equivalence
$$
\on{B}^{\on{classical}} : \Dmod(\on{Fl}_{G})^{(I, \on{ren})} \overset{\simeq}{\longrightarrow} \IndCoh(\tilde{\check{\mathcal{N}}}/\Gcheck \underset{\gcheck/\Gcheck}{\times} \tilde{\check{\mathcal{N}}}/\Gcheck).
$$
%which is essentially (because $\on{AB}_{x_0}$ and $\on{AB}^{\on{classical}}$ differ by an automorphism) a modern reformulation of the equivalence constructed by Bezrukavnikov in \cite{B}. 

\sssec{}
Our task now is to identify $\on{B}_{x_0}$, the fiber at $x_0$ of our factorization functor $\on{B}$, with $\on{B}^{\on{classical}}$. To do so, it suffices to show that 
$$
\on{Aff}_{G, x_0} \overset{\on{B}_{x_0}}{\longrightarrow} \on{Aff}_{\Gcheck, x_0}^{\on{spec}} \overset{\Psi_{x_0}}{\longrightarrow} \QCoh_{\on{co}}(\ncheck/\Bcheck_{\Ran_{x_0}} \underset{(\on{LS}_{\Gcheck}^{\on{mer}})_{\Ran_{x_0}}}{\times} \ncheck/\Bcheck_{\Ran_{x_0}})_{x_0}
$$
identifies with
$$
\iota^!: \Dmod(\on{Fl}_{G})^{(I, \on{ren})} \overset{\on{B}^{\on{classical}}}{\longrightarrow} \IndCoh(\tilde{\check{\mathcal{N}}}/\Gcheck \underset{\gcheck/\Gcheck}{\times} \tilde{\check{\mathcal{N}}}/\Gcheck) \overset{\Psi}{\longrightarrow} \QCoh(\tilde{\check{\mathcal{N}}}/\Gcheck \underset{\gcheck/\Gcheck}{\times} \tilde{\check{\mathcal{N}}}/\Gcheck).
$$

\sssec{}
By construction, we already have an identification of
\begin{align}
\on{Aff}_{G, x_0} \overset{\on{B}_{x_0}}{\longrightarrow} \on{Aff}_{\Gcheck, x_0}^{\on{spec}} &\overset{\Psi_{x_0}}{\longrightarrow} \QCoh_{\on{co}}(\ncheck/\Bcheck_{\Ran_{x_0}} \underset{(\on{LS}_{\Gcheck}^{\on{mer}})_{\Ran_{x_0}}}{\times} \ncheck/\Bcheck_{\Ran_{x_0}})_{x_0} \\ &\longrightarrow \on{QCoh}(\ncheck/\Bcheck_{\Ran_{x_0}} \times \ncheck/\Bcheck_{\Ran_{x_0}})_{x_0}
\end{align}
with
\begin{align}
    \Dmod(\on{Fl}_{G})^{(I, \on{ren})} \overset{\on{B}^{\on{classical}}}{\longrightarrow} \IndCoh(\tilde{\check{\mathcal{N}}}/\Gcheck \underset{\gcheck/\Gcheck}{\times} \tilde{\check{\mathcal{N}}}/\Gcheck) &\overset{\Psi}{\longrightarrow} \QCoh(\tilde{\check{\mathcal{N}}}/\Gcheck \underset{\gcheck/\Gcheck}{\times} \tilde{\check{\mathcal{N}}}/\Gcheck) \\ &\longrightarrow \on{QCoh}(\tilde{\check{\mathcal{N}}}/\Gcheck \times \tilde{\check{\mathcal{N}}}/\Gcheck)
\end{align}
(i.e. the functors to $\on{End}(\on{QCoh}(\tilde{\check{\mathcal{N}}}/\Gcheck))$ defined by the two actions are the same because we used $\on{AB}_{x_0}$ to construct the latter action).

\sssec{}
Therefore, all that remains to be shown is that the datum of compatibility - between the action of $\on{Aff}_{G, x_0}$ on $\on{QCoh}(\tilde{\check{\mathcal{N}}}/\Gcheck)$ and the action of $\QCoh(\gcheck/\Gcheck)$ - given by $\Psi_{x_0} \circ \on{B}_{x_0}$ is isomorphic to the tautological one. But this follows from the fact that the pair
$$
(\on{Rep}(\Gcheck), \QCoh(\ncheck/\Bcheck))
$$
carries a monoidal action of the pair
$$
(\QCoh(\on{LS}_{\Gcheck}^{\on{mer}}), \QCoh(\on{LS}_{\Gcheck}^{\on{mer}}))
$$
which has a tautological datum of compatibility with the monoidal action of $(\on{Sph}_G, \on{Aff}_{G}$); and when restricted to $x_0$, the former action identifies with the action of
$$
\QCoh(\on{LS}_{\Gcheck}(D_{x_0}^{\times}))
$$
on
$$
\QCoh(\tilde{\check{\mathcal{N}}}/\Gcheck)
$$
given by pullback along
$$
\tilde{\check{\mathcal{N}}}/\Gcheck \longrightarrow\on{LS}_{\Gcheck}(D_{x_0}^{\times})
$$
which factors through $\gcheck/\Gcheck$. This concludes the proof of Theorem \ref{Theorem B}.

\section{Iwahori FLE} \label{Section IFLE}
\subsection{Construction of IFLE} \label{Construction of IFLE}
In this subsection, we will construct the Iwahori Fundamental Local Equivalence
$$
\on{IFLE}: (\KL, \IKL) \longrightarrow (\IndCoh^*(\on{Op}_{\Gcheck}^{\on{mon-free}}), \IndCoh^*(\on{Op}_{\Gcheck}^{\on{mer}} \underset{\on{LS}_{\Gcheck}^{\on{mer}}}{\times} \ncheck/\Bcheck)).
$$
as a morphism in $\FactModCat$.

\sssec{}
We recall that there is an action
$$
\on{act} : \on{Rep}(\Gcheck) \otimes \KL \to \KL
$$
obtained from the action of $\on{Sph}_{G}$ on $\KL$ via the monoidal functor
$$
\on{Sat}_G^{\on{nv}} : \on{Rep}(\Gcheck) \to \on{Sph}_{\Gcheck}^{\on{spec}} \simeq \on{Sph}_G.
$$

We have
$$
 \on{act}(R_{\Gcheck, \on{Op}} \boxtimes Vac) \simeq Vac
$$
so
$$
R_{\Gcheck, \on{Op}} \boxtimes Vac \to \on{coact}(Vac).
$$

\sssec{}
We can extend the action $\on{act}$ to
$$
(\on{Rep}(\Gcheck), \QCoh(\ncheck/\Bcheck)) \otimes (\KL, \IKL) \to (\KL, \IKL)
$$
by composing the action of $(\on{Sph}_{G}, \on{Aff}_G)$ on $(\KL, \IKL)$ with the monoidal morphism of pairs
$$
    \on{B}^{\on{nv}} : (\on{Rep}(\Gcheck), \QCoh(\ncheck/\Bcheck)) \to (\on{Sph}_{\Gcheck}^{\on{spec}}, \on{Aff}_{\Gcheck}^{\on{spec}}) \simeq (\on{Sph}_G, \on{Aff}_G).
$$

Now analogously with Section 5.3 of \cite{GLC2}, we consider the right adjoint of the above action
$$
(\KL, \IKL) \to (\on{Rep}(\Gcheck), \QCoh(\ncheck/\Bcheck)) \otimes (\KL, \IKL).
$$

This factors through
$$
(\KL, \IKL) \to \on{coact}(Vac)\on{-FactMod}[(\on{Rep}(\Gcheck), \QCoh(\ncheck/\Bcheck)) \otimes (\KL, \IKL)]
$$
and factorization restriction along $R_{\Gcheck, \on{Op}} \boxtimes Vac \to \on{coact}(Vac)$ gives a map to
$$
(R_{\Gcheck, \on{Op}} \boxtimes Vac)\on{-FactMod}[(\on{Rep}(\Gcheck), \QCoh(\ncheck/\Bcheck)) \otimes (\KL, \IKL)]
$$
which is the same as
$$
R_{\Gcheck, \on{Op}}\on{-FactMod}(\on{Rep}(\Gcheck), \QCoh(\ncheck/\Bcheck)) \otimes Vac\on{-FactMod}(\KL, \IKL)
$$
or in other words 
$$
R_{\Gcheck, \on{Op}}\on{-FactMod}(\on{Rep}(\Gcheck), \QCoh(\ncheck/\Bcheck)) \otimes (\KL, \IKL).
$$

However, by Theorem \ref{approximation}, there is a morphism of pairs
$$
(\IndCoh^*(\on{Op}_{\Gcheck}^{\on{mon-free}}), \IndCoh^*(\on{Op}_{\Gcheck}^{\on{mer}} \underset{\on{LS}_{\Gcheck}^{\on{mer}}}{\times} \ncheck/\Bcheck)) \to R_{\Gcheck, \on{Op}}\on{-FactMod}(\on{Rep}(\Gcheck), \QCoh(\ncheck/\Bcheck))
$$
that is an equivalence on the eventually coconnective part in each component.

\sssec{}
Then we are able to obtain a map from
$$
(\KL, \IKL)
$$
to
$$
(\IndCoh^*(\on{Op}_{\Gcheck}^{\on{mon-free}}), \IndCoh^*(\on{Op}_{\Gcheck}^{\on{mer}} \underset{\on{LS}_{\Gcheck}^{\on{mer}}}{\times} \ncheck/\Bcheck)) \otimes (\KL, \IKL)
$$
since on the first component, as noted in loc. cit.
$$
\KL \to \on{Rep}(\Gcheck) \otimes \KL
$$
is t-exact so
$$
\KL^c \subset \KL^b
$$
is sent to the eventually coconnective part of 
$$
R_{\Gcheck, \on{Op}}\on{-FactMod}(\on{Rep}(\Gcheck)) \otimes \KL.
$$
On the second component, 
$$
\on{Aff}_{\Gcheck}^{\on{spec}} \otimes \IKL \to \IKL
$$
is bounded on the right for each fixed bounded object of $\IKL$. Then so is
$$
\QCoh(\ncheck/\Bcheck) \otimes \IKL \to \IKL
$$
and thus the right adjoint
$$
\IKL \to \QCoh(\ncheck/\Bcheck) \otimes \IKL
$$
is bounded on the left for each fixed bounded object of $\IKL$. Therefore, 
$$
\IKL^c \subset \IKL^b
$$
is sent to the eventually coconnective part of 
$$
R_{\Gcheck, \on{Op}}\on{-FactMod}(\QCoh(\ncheck/\Bcheck)) \otimes \IKL.
$$

\sssec{}
This morphism defines a coaction of 
$$
(\IndCoh^*(\on{Op}_{\Gcheck}^{\on{mon-free}}), \IndCoh^*(\on{Op}_{\Gcheck}^{\on{mer}} \underset{\on{LS}_{\Gcheck}^{\on{mer}}}{\times} \ncheck/\Bcheck))
$$
on $(\KL, \IKL)$, or in other words, an action of 
$$
(\IndCoh^!(\on{Op}_{\Gcheck}^{\on{mon-free}}), \IndCoh^!(\on{Op}_{\Gcheck}^{\on{mer}} \underset{\on{LS}_{\Gcheck}^{\on{mer}}}{\times} \ncheck/\Bcheck)).
$$

\sssec{}
Parallel to Section 6.1 of \cite{GLC2}, we can consider the morphism of pairs
\begin{align}
(\KL, \IKL) &\to (\on{Whit}^!(\gmod_{\on{crit}}), \on{Whit}^!(\gmod_{\on{crit}}))  \\&\overset{\on{\overline{DS}^{enh, rfnd}}}{\to} (\IndCoh^*(\on{Op}_{\Gcheck}^{\on{mer}}), \IndCoh^*(\on{Op}_{\Gcheck}^{\on{mer}}))
\end{align}
as a morphism of $(\IndCoh^!(\on{Op}_{\Gcheck}^{\on{mer}}), \IndCoh^!(\on{Op}_{\Gcheck}^{\on{mer}}))$-modules (in the monoidal sense).

We thus obtain a morphism of pairs from
$$
(\KL, \IKL)
$$ 
to
$$
\on{Fun}_{\IndCoh^!(\on{Op}_{\Gcheck}^{\on{mer}})}((\IndCoh^!(\on{Op}_{\Gcheck}^{\on{mon-free}}), \IndCoh^!(\on{Op}_{\Gcheck}^{\on{mer}} \underset{\on{LS}_{\Gcheck}^{\on{mer}}}{\times} \ncheck/\Bcheck)), \IndCoh^*(\on{Op}_{\Gcheck}^{\on{mer}}))
$$
which identifies with
$$
(\IndCoh^*(\on{Op}_{\Gcheck}^{\on{mon-free}}), \IndCoh^*(\on{Op}_{\Gcheck}^{\on{mer}} \underset{\on{LS}_{\Gcheck}^{\on{mer}}}{\times} \ncheck/\Bcheck)).
$$
This morphism of pairs is what we will denote by $\on{IFLE}$.

\begin{rem}
$\on{IFLE}$ intertwines (more precisely, has intertwining \emph{data}) the action of 
$$
(\on{Sph}_{G}, \on{Aff}_G)
$$ on 
$$
(\KL, \IKL)
$$
and the action of 
$$
(\on{Sph}_{\Gcheck}^{\on{spec}}, \on{Aff}_{\Gcheck}^{\on{spec}})
$$ on 
$$
(\IndCoh^*(\on{Op}_{\Gcheck}^{\on{mon-free}}), \IndCoh^*(\on{Op}_{\Gcheck}^{\on{mer}} \underset{\on{LS}_{\Gcheck}^{\on{mer}}}{\times} \ncheck/\Bcheck))
$$
via the monoidal equivalence of algebra objects in $\FactModCat$ 
$$
\on{B} : (\on{Sph}_{G}, \on{Aff}_G) \simeq (\on{Sph}_{\Gcheck}^{\on{spec}}, \on{Aff}_{\Gcheck}^{\on{spec}}).
$$
The reason is that everything from Appendix E.10.4 to the end of E.11 in \cite{GLC2} carries over \emph{mutatis mutandis} to the setting of pairs (and Lemma E.10.3 is replaced by our construction of $\on{B}$).
\end{rem}

\subsection{Reduction to the Pointwise Equivalence} \label{Reduction to the Pointwise Equivalence}
We will now show that to prove $\on{IFLE}$ is an equivalence of pairs, it is sufficient to prove that it is an equivalence at $x_0$. Our strategy is similar to that for $\on{FLE}$ in Section 6.2 of \cite{GLC2}.

\sssec{}
For the rest of the subsection, let us assume the following
\begin{prop} \label{pointwise IFLE}
$$
\on{IFLE}_{x_0} : \IKLx \to \IndCoh^*(\on{Op}_{\Gcheck}^{\on{mer}} \underset{\on{LS}_{\Gcheck}^{\on{mer}}}{\times} \ncheck/\Bcheck)_{x_0}
$$
is an equivalence.
\end{prop}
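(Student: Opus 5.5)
The plan follows the outline in the introduction. Tensoring with $\on{Whit}^!(\on{Fl}_G)_{x_0}$ over $\on{Aff}_{G, x_0}$ should detect equivalences, so we apply the functor
$$
\Phi := \on{Whit}^!(\on{Fl}_G)_{x_0} \underset{\on{Aff}_{G, x_0}}{\otimes} -
$$
to both sides of $\on{IFLE}_{x_0}$. Using $\on{B}$ (Theorem \ref{Theorem B}) and the intertwining data of the Remark following the construction of $\on{IFLE}$, the spectral side becomes $\QCoh(\ncheck/\Bcheck)_{x_0} \underset{\on{Aff}_{\Gcheck, x_0}^{\on{spec}}}{\otimes} -$, via the $\on{B}$-equivariance of $\on{AB}$ (Theorem \ref{Theorem AB}). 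The proof then has three steps: (i) show that $\Phi$ is conservative on both sides, (ii) show that $\Phi(\on{IFLE}_{x_0})$ is an equivalence, (iii) conclude.

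For step (i) we introduce \emph{Iwahori-Hecke temperedness}. An $\on{Aff}_{G,x_0}$-module $\mathcal{M}$ is tempered if it is generated by the image of the left adjoint of $\mathcal{M} \to \on{Fun}_{\on{Aff}_{G, x_0}}(\on{Whit}^!(\on{Fl}_G)_{x_0}, \mathcal{M})$. Equivalently, the antitempered part, on which $\on{Aff}_{G,x_0}$ acts through its quotient by infinitely connective objects, vanishes. By the description of $\on{B}^{\on{classical}}$ from \cite{DT} as a localization followed by renormalization, $\Phi$ is conservative on tempered modules.

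\begin{itemize}
\item \emph{Spectral side.} $\IndCoh^*(\on{Op}_{\Gcheck}^{\on{mer}} \underset{\on{LS}_{\Gcheck}^{\on{mer}}}{\times} \ncheck/\Bcheck)_{x_0}$ is tempered. The action of $\on{Aff}^{\on{spec}}_{\Gcheck,x_0} \simeq \IndCoh(\tilde{\check{\mathcal{N}}}/\Gcheck \times_{\gcheck/\Gcheck} \tilde{\check{\mathcal{N}}}/\Gcheck)$ factors through $\QCoh$ after base change to the opers, which have no singular support in the relevant direction.
\item \emph{Geometric side.} $\IKLx$ is tempered. Adapting the spherical argument of \cite{GLC2}, the Kazhdan--Lusztig vacuum-type generators (Weyl and Verma modules induced from $I$) are eventually coconnective. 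Also, by the boundedness established in the construction of $\on{IFLE}$, the action of infinitely connective objects of $\on{Aff}_{G,x_0}$ kills them.
\end{itemize}

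For step (ii), embed both sides of $\Phi(\on{IFLE}_{x_0})$ into the ambient category $\IndCoh^*(\on{Op}_{\Gcheck, x_0}^{\on{mer}})$.

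\begin{itemize}
\item \emph{Spectral side.} $\QCoh(\ncheck/\Bcheck) \otimes_{\on{Aff}^{\on{spec}}} \IndCoh^*(\on{Op}^{\on{mer}}\times_{\on{LS}^{\on{mer}}} \ncheck/\Bcheck)$ is computed by a base change to $\IndCoh^*(\on{Op}_{\Gcheck, x_0}^{\on{mer}})$, or to its full subcategory supported over $\gcheck/\Gcheck$.
\item \emph{Geometric side.} $\on{Whit}^!(\on{Fl}_G)_{x_0} \otimes_{\on{Aff}_{G,x_0}} \IKLx \simeq \on{Whit}^!(\gmod_{\on{crit},x_0})^{\text{(unipotent monodromy part)}}$. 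This uses the identification $\on{Whit}^!(\on{Fl}_G)\otimes_{\on{Aff}}\mathcal{C}^I \simeq \on{Whit}(\mathcal{C})$ restricted to the appropriate block. Then $\overline{\on{DS}}^{\on{enh,rfnd}}$ embeds it fully faithfully into $\IndCoh^*(\on{Op}^{\on{mer}}_{\Gcheck,x_0})$, by the Frenkel--Gaitsgory/Raskin description of Whittaker Kac--Moody modules at critical level.
\end{itemize}

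Since $\on{IFLE}$ was built from $\overline{\on{DS}}^{\on{enh,rfnd}}$, the two embeddings are compatible by construction. It remains to match their essential images. Both should equal the objects set-theoretically supported on opers whose underlying local system has unipotent monodromy, i.e. those lying over $\gcheck/\Gcheck \to \on{LS}_{\Gcheck}(D^\times_{x_0})$. On the geometric side this is the statement that $I$-integrable critical modules have Whittaker reductions with unipotent-monodromy central support. For the spectral side it follows from the fiber product description.

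For step (iii): $\Phi(\on{IFLE}_{x_0})$ is an equivalence, both sides are tempered, and $\Phi$ is conservative. Applying $\Phi$ to the unit and counit of the adjunction (after checking that $\on{IFLE}_{x_0}$ has a continuous $\on{Aff}$-linear right adjoint) then shows $\on{IFLE}_{x_0}$ is an equivalence.

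I expect the main obstacle to be the temperedness of $\IKLx$ together with the essential-image matching in step (ii). My first approach is to reduce both to the spherical case treated in \cite{GLC2}, using the averaging functor $\IKLx \to \on{KL}(G)_{\on{crit},x_0}$ and its adjoint. These generate because $\on{Fl}_G \to \on{Gr}_G$ is proper with flag-variety fibres.
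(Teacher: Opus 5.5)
Your overall architecture matches the paper's: apply $\on{Whit}(\on{Fl}_G)_{x_0}\otimes_{\on{Aff}_{G,x_0}}-$, which $\on{AB}$ and $\on{B}$ match with $\QCoh(\ncheck/\Bcheck)_{x_0}\otimes_{\on{Aff}^{\on{spec}}_{\Gcheck,x_0}}-$; embed both outputs in $\IndCoh(\on{Op}^{\on{mer}}_{\Gcheck,x_0})$; then descend using Iwahori--Hecke temperedness of both sides. The gap is in the geometric half of step (ii).

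Full faithfulness there is obtainable; the paper deduces it from $\Dmodh(\on{Fl}_G)\otimes_{\on{Aff}}\boldsymbol{C}^I\to\boldsymbol{C}$ being fully faithful. Containment of the image in the nilpotent-monodromy block is also easy: compose with $\on{IFLE}_{x_0}$ and push forward. But ``$\on{Whit}(\on{Fl}_G)\otimes_{\on{Aff}}\mathcal{C}^I\simeq$ the appropriate block of $\on{Whit}(\mathcal{C})$'' is not a general fact. For $\mathcal{C}=\gmod_{\on{crit}}$ it \emph{is} the essential surjectivity you need, and your support statement gives only one inclusion.

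Your fallback, reducing to the spherical case via averaging to $\on{KL}(G)_{\on{crit},x_0}$ and its adjoint, cannot supply the other inclusion. Both $\on{oblv}$ and the $\on{Aff}$-action commute with the action of the center $\mathcal{O}(\on{Op}^{\on{mer}}_{\Gcheck,x_0})$. So everything generated from $\on{KL}$ is set-theoretically supported over $\on{Op}^{\on{mon-free}}$, i.e. on the zero-residue loci $\on{Op}^{\check\lambda\text{-reg}}\subsetneq\on{Op}^{\check\lambda\text{-nilp}}$. In particular $\mathbb{M}^{\check\lambda}$, on which $\on{End}(\mathbb{M}^{\check\lambda})\simeq\mathcal{O}(\on{Op}^{\check\lambda\text{-nilp}})$ acts faithfully, is never reached. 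Properness of $\on{Fl}_G\to\on{Gr}_G$ only makes $*$- and $!$-averaging agree up to shift; it gives no generation. Already for finite $\mathfrak{g}$, $\on{Rep}(G)$ does not generate the BGG category $\mathcal{O}$.

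The missing ingredient is the Iwahori-level computation the paper carries out. The image of $\mathbb{M}^{\check\lambda}$ in $\IndCoh(\on{Op}^{\on{mer}}_{\Gcheck,x_0})$ is $\mathcal{O}_{\on{Op}^{\check\lambda\text{-nilp}}_{\Gcheck,x_0}}$, and these objects compactly generate the nilpotent-monodromy block. The paper proves this by adapting Frenkel--Gaitsgory. The natural map $\on{End}(\mathbb{M}^{\check\lambda})\to H^{\frac{\infty}{2}}(\mathfrak{n}((t)),\mathfrak{n}[[t]],\mathbb{M}^{\check\lambda}\otimes\Psi_0)$ is injective because $\mathcal{O}_{\on{Op}^{\check\lambda\text{-nilp}}}$ is irreducible over the conormal Lie algebroid. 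It is then an isomorphism by a character comparison.

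Two smaller points:
\begin{enumerate}
\item The relevant block lies over the image of $\ncheck/\Bcheck$ (unipotent monodromy). It does not lie over the image of $\gcheck/\Gcheck$, which is the whole regular-singular locus.
\item Bounded amplitude of the action does not make infinitely connective objects of $\on{Aff}^{\on{spec}}$ \emph{kill} compact objects of $\IKLx$; it only puts $\mathcal{F}\star c$ in $\IKLx^{\leq-\infty}$. The paper closes this by showing that the colocalization $\on{temp}$ preserves compacts, because $\on{IFLE}$ does and $\on{IFLE}^{\on{temp}}$ is an equivalence. It then uses that a bounded, infinitely connective object vanishes. The equivalence of $\on{IFLE}^{\on{temp}}$ itself uses the outcome of step (ii), so temperedness of $\IKLx$ cannot be proved first, as your ordering requires.
\end{enumerate}
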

This proposition will be proven in the latter subsections of this section.

\sssec{}
We note that on each stratum $X_{x_0}^{(n), \circ}$ of $\Ran_{x_0}$,
$$
\on{IFLE}_{X_{x_0}^{(n), \circ}} : (\IKL)_{X_{x_0}^{(n), \circ}} \to \IndCoh^*(\on{Op}_{\Gcheck}^{\on{mer}} \underset{\on{LS}_{\Gcheck}^{\on{mer}}}{\times} \ncheck/\Bcheck)_{X_{x_0}^{(n), \circ}}
$$
identifies with
\begin{align}
\on{FLE}_{X^{(n), \circ}} \boxtimes \on{IFLE}_{x_0} : &(\KL)_{X^{(n), \circ}} \boxtimes \IKLx \to \\
&\IndCoh^*(\on{Op}_{\Gcheck}^{\on{mon-free}})_{X^{(n), \circ}} \boxtimes \IndCoh^*(\on{Op}_{\Gcheck}^{\on{mer}} \underset{\on{LS}_{\Gcheck}^{\on{mer}}}{\times} \ncheck/\Bcheck)_{x_0}
\end{align}
which is an equivalence because $\on{FLE}$ is an equivalence.

\sssec{}
Now it suffices to show that $\IKL$ is compactly generated and that $\on{IFLE}$ preserves compacts. This will demonstrate that $\on{IFLE}$ has a continuous right adjoint as a morphism of sheaves of categories, which implies that whether it is an equivalence can be checked strata by strata.

\sssec{}
In the same way that $\KL$ is generated by the images of the compact generators of $\on{Rep}(L^+G)$ under
$$
\on{ind}_{L^+G}^{(\hat{g}_{\on{crit}}, L^+G)} : \on{Rep}(L^+G) \to \KL,
$$
the category $\IKL$ is generated by the images of the compact generators of the factorization $\on{Rep}(L^+G)$-module category $\on{Rep}(I)$ under
$$
\on{ind}_{I}^{(\hat{g}_{\on{crit}}, I)} : \on{Rep}(I) \to \IKL
$$
which can be verified by mimicking the construction from Appendix B.14 of \cite{GLC2}.

\begin{rem}
    The argument below was explained to us by Lin Chen.
\end{rem}
\begin{lem}
    An object $c \in \IndCoh^*(\on{Op}_{\Gcheck}^{\on{mer}} \underset{\on{LS}_{\Gcheck}^{\on{mer}}}{\times} \ncheck/\Bcheck)$ is compact if for every $\lambda \in \Lambda_{G}$, the *-pushforward along
    $$
    \on{Op}_{\Gcheck}^{\on{mer}} \underset{\on{LS}_{\Gcheck}^{\on{mer}}}{\times} \ncheck/\Bcheck \longrightarrow \on{Op}_{\Gcheck}^{\on{mer}}
    $$
    of
    $$
    c(\lambda) = c \otimes (\on{Op}_{\Gcheck}^{\on{mer}} \underset{\on{LS}_{\Gcheck}^{\on{mer}}}{\times} \ncheck/\Bcheck \to \ncheck/\Bcheck)^! \mathcal{O}(\lambda)
    $$
    is compact in $\IndCoh^*(\on{Op}_{\Gcheck}^{\on{mer}})$.
\end{lem}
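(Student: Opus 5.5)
We argue by reducing compactness to a statement about the pushforward functor $\pi_* : \IndCoh^*(\on{Op}_{\Gcheck}^{\on{mer}} \underset{\on{LS}_{\Gcheck}^{\on{mer}}}{\times} \ncheck/\Bcheck) \to \IndCoh^*(\on{Op}_{\Gcheck}^{\on{mer}})$, where $\pi$ denotes the projection. Since $\ncheck/\Bcheck_{\Ran_{x_0}} \to \on{LS}_{\Gcheck, \Ran_{x_0}}^{\on{mer}}$ is ind-proper, its base change $\pi$ is ind-proper. Hence $\pi_*$ admits a continuous right adjoint $\pi^!$, and $\pi_*$ preserves compacts. The task is the reverse direction: compactness of all twists $\pi_*(c(\lambda))$ should force compactness of $c$.

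\textbf{Step 1: ampleness.} Fiberwise, $\ncheck/\Bcheck \to \gcheck/\Gcheck$ is (a quotient of) the Grothendieck--Springer map, which is projective with relatively ample family $\mathcal{O}(\lambda)$ for $\lambda$ dominant regular. Pulled back along the base change, the line bundles $\mathcal{O}(\lambda)$ remain $\pi$-ample. We then use the standard consequence of relative ampleness. Let $\mathcal{F}_\lambda$ be the functor $d \mapsto \pi_*(d(\lambda))$, with right adjoint $e \mapsto \pi^!(e)(-\lambda)$. The objects $\pi^!(e)(-\lambda)$, for $e$ ranging over compact generators of the base and $\lambda$ over the weight lattice, generate the source. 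Equivalently, the functors $\mathcal{F}_\lambda$ are jointly conservative and commute with filtered colimits.

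\textbf{Step 2: Noetherian-type finiteness.} We want the finite-type criterion: $c$ is compact if and only if, for a finite set of $\lambda$ (a Beilinson-type resolution of the diagonal of the flag variety, of length $\dim \check{G}/\check{B}$), $c$ lies in the subcategory generated under finite colimits and retracts by the objects $\pi^{!}(\pi_*(c(\lambda)))(-\lambda)$. Concretely, the diagonal of $\ncheck/\Bcheck \times_{\gcheck/\Gcheck} \ncheck/\Bcheck$ relative to the base admits a finite resolution by external products $\mathcal{O}(\lambda_i) \boxtimes V_i \otimes \mathcal{O}(\mu_i)$, pulled back from the finite-dimensional fiber at $x_0$. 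This resolution expresses $c$ as a finite iterated cone built from objects $\pi^*$- or $\pi^!$-pulled back from $\pi_*(c(\lambda_i))$ and twisted by $\mathcal{O}(\mu_i)$. Each such object is compact whenever $\pi_*(c(\lambda_i))$ is compact, since pullback and twisting along the smooth, finite-dimensional relative direction preserve compactness. We would verify this part over $\on{Ran}_{x_0}$ using factorization, so that on each stratum the claim reduces to the fiber at $x_0$ tensored with the unramified part.

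\textbf{Step 3: the main obstacle.} The hard step is making Step 2 rigorous in the $\IndCoh^*$ setting on an ind-placid, infinite-type ind-scheme. There, $\pi^*$ and $\pi^!$ differ by a relative dualizing twist, and compactness is defined through the colimit presentation rather than through coherence. Our first approach is to write $\on{Op}_{\Gcheck}^{\on{mer}}$ as a colimit of placid schemes $Z_\alpha$ with $\IndCoh^*$ given by $*$-pushforwards. The fiber product is then a colimit of $Z_\alpha \times_{\gcheck/\Gcheck} \ncheck/\Bcheck$, which is projective and finitely presented over each $Z_\alpha$. On these schemes the resolution of the diagonal argument is classical, and we check that it is compatible with the transition $*$-pushforwards, so that it passes to the colimit.
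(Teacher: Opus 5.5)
There is a genuine gap in Step 2, and the whole argument rests on that step. You apply a Beilinson-type resolution to the relative diagonal of $\pi$ itself, i.e.\ to $\Delta\colon \ncheck/\Bcheck \to \ncheck/\Bcheck\underset{\gcheck/\Gcheck}{\times}\ncheck/\Bcheck$ (base-changed to $\on{Op}_{\Gcheck}^{\on{mer}}$). But $\pi$ is not smooth in the relative direction. It is a base change of the Springer-type map $\tilde{\check{\mathcal{N}}}/\Gcheck\to\on{LS}_{\Gcheck}^{\on{mer}}$, whose fibers are Springer fibers of varying dimension. As a result, $\Delta_*\mathcal{O}$ on the Steinberg stack is coherent but not perfect.

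Already for $\Gcheck=SL_2$ you can see this. Near a point of the zero section, the (classical) Steinberg variety is locally $\{ab=0\}\times\mathbb{A}^1$. The diagonal component is $\{a=0\}$, and its structure sheaf has an infinite periodic resolution by multiplication by $a$ and $b$. This non-perfectness of the unit is the same phenomenon that makes $\QCoh$ and $\IndCoh$ of the Steinberg stack differ, i.e.\ that makes Iwahori--Hecke temperedness a nontrivial condition.

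The objects $\mathcal{O}(\lambda_i)\boxtimes V_i\otimes\mathcal{O}(\mu_i)$ are vector bundles. So $\Delta_*\mathcal{O}$ cannot have a finite resolution by them, nor lie in the thick subcategory they generate. Hence the Fourier--Mukai presentation of $c$ as a finite iterated cone of twisted pullbacks of the $\pi_*(c(\lambda_i))$ is not available. Your justification for compactness of those terms (``smooth, finite-dimensional relative direction'') also does not apply to $\pi$.

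The ingredient you name, the resolution of the diagonal of $\Gcheck/\Bcheck$, is the right one. It has to be applied after factoring $\pi$, which is what the paper does:
\begin{itemize}
\item Since $\check{\mathcal{N}}/\Gcheck\to\on{LS}_{\Gcheck}^{\on{mer}}$ is a closed embedding, and closed pushforward is conservative and detects coherent objects, you may replace the target by the base change of $\check{\mathcal{N}}/\Gcheck$.
\item By fppf descent, pass to the Springer resolution $\tilde{\check{\mathcal{N}}}\to\check{\mathcal{N}}$.
\item Factor it as the closed embedding $\tilde{\check{\mathcal{N}}}\hookrightarrow\check{\mathcal{N}}\times\Gcheck/\Bcheck$ followed by the projection, and use again that the closed embedding detects coherence.
\item What remains is the smooth proper projection base-changed from $\Gcheck/\Bcheck\to\on{pt}$. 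There finitely many $\mathcal{O}(\lambda)$ generate, so $\QCoh(\Gcheck/\Bcheck)\simeq R\on{-mod}$ with $R$ smooth and proper. Smoothness of $R$ (equivalently, your resolution of the diagonal, now of the flag variety) shows a module is compact as soon as its underlying object is; that underlying object is a finite sum of twisted pushforwards.
\end{itemize}
In this formulation the pullbacks you need are along a genuinely smooth map. Also, the infinite-type issue you single out in Step 3 is not the real obstacle: every map involved is base-changed from the finite-dimensional picture at $x_0$.
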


\begin{proof}
    The map 
    $$
    \on{Op}_{\Gcheck}^{\on{mer}} \underset{\on{LS}_{\Gcheck}^{\on{mer}}}{\times} \ncheck/\Bcheck \longrightarrow \on{Op}_{\Gcheck}^{\on{mer}}
    $$
    is a base change of
    $$
    \tilde{\check{\mathcal{N}}}/\Gcheck \longrightarrow \check{\mathcal{N}}/\Gcheck \longrightarrow \on{LS}_{\Gcheck}^{\on{mer}}.
    $$
    The second map is a closed embedding, so it detects coherent objects. Thus, it suffices to show the claim for 
    $$
    \tilde{\check{\mathcal{N}}}/\Gcheck \longrightarrow \check{\mathcal{N}}/\Gcheck
    $$
    and by fppf descent, we can reduce to showing the claim for the springer resolution
    $$
    \tilde{\check{\mathcal{N}}} \longrightarrow \check{\mathcal{N}}
    $$
    which factors as
    $$
    \tilde{\check{\mathcal{N}}} \longrightarrow \check{\mathcal{N}} \times \Gcheck/\Bcheck \longrightarrow \check{\mathcal{N}}.
    $$
    Since the first map is a closed embedding, it detects coherent objects, and it suffices to show the claim for
    $$
    \Gcheck/\Bcheck \longrightarrow \on{pt}.
    $$
    But on $\Gcheck/\Bcheck$, the claim that the $\lambda$-twisted global sections functors jointly detect coherent objects follows from the fact that $\QCoh(\Gcheck/\Bcheck) = \IndCoh(\Gcheck/\Bcheck)$ is generated by a finite number of $\mathcal{O}(\lambda)$, and thus
    $$
    \QCoh(\Gcheck/\Bcheck) \simeq R\on{-mod}
    $$
    for a smooth proper dg algebra $R$, where the forgetful functor $R\on{-mod} \to \on{Vect}$ corresponds to a finite sum of twisted global sections functors. But this clearly detects coherent objects.
\end{proof}

\begin{prop}
The functor $\on{IFLE}$ preserves compact objects.
\end{prop}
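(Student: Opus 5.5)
We check compactness of $\on{IFLE}$ applied to the generators of $\IKL$, namely the objects $\on{ind}_{I}^{(\hat{g}_{\on{crit}}, I)}(V)$ for $V$ compact in $\on{Rep}(I)$. By the preceding Lemma, it suffices to show that for every $\lambda \in \Lambda_G$, the $*$-pushforward of $\on{IFLE}(\on{ind}(V))(\lambda)$ along $\pi : \on{Op}_{\Gcheck}^{\on{mer}} \underset{\on{LS}_{\Gcheck}^{\on{mer}}}{\times} \ncheck/\Bcheck \to \on{Op}_{\Gcheck}^{\on{mer}}$ is compact in $\IndCoh^*(\on{Op}_{\Gcheck}^{\on{mer}})$. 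Fiberwise over $\Ran_{x_0}$ this is a question about a single category. By the identification on strata with $\on{FLE}\boxtimes\on{IFLE}_{x_0}$ and the known compactness statement for $\on{FLE}$, it reduces to $x_0$ together with control of the factorization gluing, which we handle as in Section 6.2 of \cite{GLC2}.

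First, we identify the twist by $\mathcal{O}(\lambda)$ on the geometric side. Under $\on{B}^{\on{nv}}$, the line bundle $\mathcal{O}(\lambda)$ on $\ncheck/\Bcheck$ corresponds to a Wakimoto-type object $J_\lambda \in \on{Aff}_{G,x_0}$. By the Remark that $\on{IFLE}$ intertwines the $(\on{Aff}_G,\on{Aff}_{\Gcheck}^{\on{spec}})$-actions via $\on{B}$, we get $\on{IFLE}(c)(\lambda) \simeq \on{IFLE}(J_\lambda \star c)$. Convolution with $J_\lambda$ is an autoequivalence of $\IKL$ that preserves compacts, since $J_\lambda$ is invertible. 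So it suffices to show that $\pi_* \circ \on{IFLE}$ sends compact generators of $\IKL$ to compacts.

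Next, we identify $\pi_*\circ\on{IFLE}$. By construction, $\on{IFLE}$ is obtained by pairing against $\overline{\on{DS}}^{\on{enh,rfnd}}$ over $\IndCoh^!(\on{Op}^{\on{mer}}_{\Gcheck})$. Hence $\pi_*\circ \on{IFLE}$ should be identified with the composite $\IKL \to \on{Whit}^!(\gmod_{\on{crit}}) \to \IndCoh^*(\on{Op}^{\on{mer}}_{\Gcheck})$, that is, with Drinfeld--Sokolov reduction applied to the image of the object under the forgetful functor to $\gmod_{\on{crit}}$. The object $\on{ind}_I(V)$ admits a finite filtration by pieces $\on{ind}_I(\chi)$ for characters $\chi$ of $T$, because $V$ is finite-dimensional up to the $I$-action and $I$ is an extension of $T$ by a prounipotent group. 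So it reduces to the case $V=\chi$, which gives Verma-type modules $\mathbb{M}_\chi$ at critical level. For these, $\overline{\on{DS}}(\mathbb{M}_\chi)$ is known, by Frenkel--Gaitsgory, to be the structure sheaf of a finite-dimensional affine subscheme of opers with regular singularity and prescribed residue, pushed forward to $\on{Op}^{\on{mer}}$. That pushforward is compact in $\IndCoh^*$.

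The main obstacle is this last step: making the identification $\pi_*\circ\on{IFLE}\simeq \overline{\on{DS}}\circ\on{oblv}$ precise, and computing DS of Verma modules in the present normalization. This normalization includes the $\rho(\omega_X)$-twists and the automorphism $\alpha$ relating $\on{AB}_{x_0}$ to $\on{AB}^{\on{classical}}$, which might move $\mathcal{O}(\lambda)$. We would first try to avoid the explicit computation. Instead, we would note that $\overline{\on{DS}}$ sends the vacuum-type generators of $\KL$ to compacts (this is \cite{GLC2}), and that $\on{ind}_I(\chi)$ is obtained from $\on{ind}_{L^+G}$ of a finite-dimensional module by convolution with finitely many compact objects of $\on{Aff}_G$ (via $\on{Av}^{I}$ and Wakimoto objects). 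Compactness would then follow from the intertwining with $\on{B}$ and the fact that $\on{Aff}_{\Gcheck}^{\on{spec}}$ acts by functors preserving compacts.
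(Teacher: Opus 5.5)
Up to the last step your argument is the paper's: generate $\IKL$ by the induced (Verma) modules, apply the preceding Lemma, trade the twist by $\mathcal{O}(\lambda)$ for convolution with an invertible object $J_\lambda$ of $\on{Aff}_G$, and identify $\pi_*\circ\on{IFLE}$ with $\IKL \to \gmod_{\on{crit}} \to \on{Whit}^*(\gmod_{\on{crit}}) \simeq \IndCoh^*(\on{Op}_{\Gcheck}^{\on{mer}})$.

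The gap is that you leave the final step open and call it the main obstacle, whereas it is formal. For $\mathcal{C}=\gmod_{\on{crit}}$ the composite $\mathcal{C}^I \to \mathcal{C} \to \on{Whit}^*(\mathcal{C})$ preserves compact objects. The forgetful functor has the continuous right adjoint $\on{Av}^I_*$, since $I$ is a group scheme. The projection onto Whittaker coinvariants also has a continuous right adjoint. Hence $\pi_*\circ\on{IFLE}$ sends every compact object, in particular $J_\lambda\star\mathbb{M}^{\check\lambda}$, to a compact object, and the Lemma finishes the proof.

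No knowledge of what Drinfeld--Sokolov reduction does to Verma modules is needed. Your worries about normalizations (the $\rho(\omega_X)$-twists, the automorphism $\alpha$) are also moot. The only property of $J_\lambda$ used is invertibility. That holds for whatever object corresponds to the twist by $\mathcal{O}(\lambda)$ under the monoidal equivalence $\on{B}$, since that twist is itself invertible.

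Of your two proposed ways to close the step, the first rests on identifying $\pi_*\on{IFLE}(\mathbb{M}^{\check\lambda})$ with $\mathcal{O}_{\on{Op}_{\Gcheck,x_0}^{\check\lambda-\on{nilp}}}$. In this paper that identification is a later and nontrivial result: an adaptation of the Frenkel--Gaitsgory character computation, carried out at $x_0$. Using it here makes a formal statement depend on the hardest computation in the section. Moreover, $\on{Op}_{\Gcheck,x_0}^{\check\lambda-\on{nilp}}$ is not finite-dimensional; like $\on{Op}_{\Gcheck}^{\on{reg}}$ it is of infinite type. So compactness of its structure sheaf in $\IndCoh^*$ would have to come from placidity, not finite-dimensionality.

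The second route does not work as stated. Passing from $\KL$ to $\IKL$ (restriction of equivariance from $L^+G$ to $I$) is not convolution with an object of $\on{Aff}_G$, so the intertwining with $\on{B}$ does not apply to it. You would need a compatibility of $\on{IFLE}$ with $\on{FLE}$ along this functor, which is not available. The claim that Vermas arise from Weyl modules by finitely many such operations is also unjustified.

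Finally, drop the opening reduction to strata. Preservation of compacts by a functor is not local on a stratification. In the paper the logic runs the other way: compact preservation is what supplies the continuous right adjoint that allows the equivalence to be checked stratum by stratum.
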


\begin{proof}
    $\IKL$ is compactly generated by the verma modules $\mathbb{M}^{\check{\lambda}}$, so we want to show that $\on{IFLE}(\mathbb{M}^{\check{\lambda}})$ is compact. By the above lemma, it suffices to show that
    $$
    (\on{Op}_{\Gcheck}^{\on{mer}} \underset{\on{LS}_{\Gcheck}^{\on{mer}}}{\times} \ncheck/\Bcheck \to \on{Op}_{\Gcheck}^{\on{mer}})_*(\on{IFLE}(\mathbb{M}^{\check{\lambda}})(\lambda))
    $$
    is compact. However, $\on{IFLE}$ is compatible with $\on{B}$, so twisting by $\lambda$ on the spectral side corresponds to convolving with the Wakimoto $J_{\lambda}$ on the geometric side, i.e.
    $$
    \on{IFLE}(\mathbb{M}^{\check{\lambda}})(\lambda) = \on{IFLE}(\mathbb{M}^{\check{\lambda}} \star J_{\lambda}).
    $$
    Furthermore, 
    $$
    (\on{Op}_{\Gcheck}^{\on{mer}} \underset{\on{LS}_{\Gcheck}^{\on{mer}}}{\times} \ncheck/\Bcheck \to \on{Op}_{\Gcheck}^{\on{mer}})_* \circ \on{IFLE} : \IKL \longrightarrow \IndCoh^*(\on{Op}_{\Gcheck}^{\on{mer}})
    $$
    corresponds to the functor
    $$
    \mathcal{C}^I \longrightarrow \mathcal{C} \longrightarrow \on{Whit}^*(\mathcal{C})
    $$
    for $\mathcal{C} = \gmod_{\on{crit}}$, which preserves compacts. $\mathbb{M}^{\check{\lambda}}$ is compact, and since convolving with a Wakimoto is an equivalence, so is $\mathbb{M}^{\check{\lambda}} \star J_{\lambda}$, and we are done.
\end{proof}

\subsection{Iwahori-Hecke Temperedness} \label{Affine Temperedness}
In this subsection, we will introduce the notion of \emph{Iwahori-Hecke Temperedness} and explain how to use it to prove Proposition \ref{pointwise IFLE}.

\sssec{}
Consider the fiber of $\on{Aff}_{\Gcheck}^{\on{spec}}$ at $x_0$:
$$
\IndCoh^*(\ncheck/\Bcheck_{\Ran_{x_0}} \underset{(\on{LS}_{\Gcheck}^{\on{mer}})_{\Ran_{x_0}}}{\times} \ncheck/\Bcheck_{\Ran_{x_0}})_{x_0} \simeq \IndCoh(\ncheck/\Bcheck \underset{\gcheck/\Gcheck}{\times} \ncheck/\Bcheck).
$$
Denote by $\on{Aff}_{\Gcheck, x_0}^{\on{spec, temp}}$ the category
$$
\QCoh(\ncheck/\Bcheck \underset{\gcheck/\Gcheck}{\times} \ncheck/\Bcheck).
$$
There is a fully faithful embedding
$$
\on{Aff}_{\Gcheck, x_0}^{\on{spec, temp}} \to \on{Aff}_{\Gcheck, x_0}^{\on{spec}}
$$
which has a monoidal continuous right adjoint
$$
\on{Aff}_{\Gcheck, x_0}^{\on{spec}} \to \on{Aff}_{\Gcheck, x_0}^{\on{spec, temp}}.
$$

\sssec{}
Let $\boldsymbol{M}$ be a $\on{Aff}_{\Gcheck, x_0}^{\on{spec}}$-module category. Then we can consider
$$
\boldsymbol{M}^{\on{temp}} = \on{Aff}_{\Gcheck, x_0}^{\on{spec, temp}} \underset{\on{Aff}_{\Gcheck, x_0}^{\on{spec}}}{\otimes} \boldsymbol{M}
$$
and the adjunction
$$
\on{Aff}_{\Gcheck, x_0}^{\on{spec, temp}} \rightleftharpoons \on{Aff}_{\Gcheck, x_0}^{\on{spec}}
$$
induces an adjunction
$$
\boldsymbol{M}^{\on{temp}} \rightleftharpoons \boldsymbol{M}.
$$

\begin{defn}
    We say that a $\on{Aff}_{\Gcheck, x_0}^{\on{spec}}$-module category $\boldsymbol{M}$ is \emph{Iwahori-Hecke tempered} if the above adjunction is an equivalence.
\end{defn}

\begin{rem}
    We use the adverb \emph{Iwahori-Hecke} in order to distinguish our notion from the analogous notion of temperedness defined in Section 7.1 of \cite{GLC2}. We will refer to the latter as \emph{spherical} temperedness.
\end{rem}

\sssec{}
Now let
$$
\on{LS}_{\Gcheck, x_0}^{\on{nilp-mon}}
$$
be the image of
$$
\ncheck/\Bcheck \to \on{LS}_{\Gcheck, x_0}^{\on{mer}}
$$
and note that
$$
\on{Aff}_{\Gcheck, x_0}^{\on{spec, temp}} \simeq \on{Fun}_{\QCoh((\on{LS}_{\Gcheck, x_0}^{\on{mer}})_{\on{nilp-mon}}^\wedge)}(\QCoh((\ncheck/\Bcheck)_{x_0}), \QCoh((\ncheck/\Bcheck)_{x_0})).
$$
For a $\on{Aff}_{\Gcheck, x_0}^{\on{spec}}$-module category $\boldsymbol{M}$, since $\QCoh((\ncheck/\Bcheck)_{x_0})$ is rigid, entirely parallel to Corollary 7.1.8 in \cite{GLC2}, we can conclude that
$$
\boldsymbol{M}^{\on{temp}} \simeq \on{Fun}_{\QCoh((\on{LS}_{\Gcheck, x_0}^{\on{mer}})_{\on{nilp-mon}}^\wedge)}(\QCoh((\ncheck/\Bcheck)_{x_0}), \QCoh((\ncheck/\Bcheck)_{x_0}) \underset{\on{Aff}_{\Gcheck, x_0}^{\on{spec}}}{\otimes} \boldsymbol{M})
$$
and thus on the full subcategory of Iwahori-Hecke tempered $\on{Aff}_{\Gcheck, x_0}^{\on{spec}}$-module categories, the assignment
$$
\boldsymbol{M} \mapsto \QCoh((\ncheck/\Bcheck)_{x_0}) \underset{\on{Aff}_{\Gcheck, x_0}^{\on{spec}}}{\otimes} \boldsymbol{M}
$$
is conservative.

\sssec{}
Here is our strategy for proving Proposition \ref{pointwise IFLE}. It is similar to the argument given throughout Section 7.2 to 7.4 of \cite{GLC2}. First, we will construct a commutative diagram
$$
\begin{tikzcd}
	{\on{Whit}^*(\on{Fl}_G)_{x_0} \underset{\on{Aff}_{G, x_0}}{\otimes} \IKLx} && {\QCoh(\ncheck/\Bcheck)_{x_0} \underset{\on{Aff}_{G, x_0}^{\on{spec}}}{\otimes} \IndCoh^*(\on{Op}_{\Gcheck}^{\on{mer}} \underset{\on{LS}_{\Gcheck}^{\on{mer}}}{\times} \ncheck/\Bcheck)_{x_0}} \\
	\\
	{\IndCoh(\on{Op}_{\Gcheck, x_0}^{\on{mer}})} & {\boldsymbol{=}} & {\IndCoh(\on{Op}_{\Gcheck, x_0}^{\on{mer}}).}
	\arrow[from=1-1, to=1-3]
	\arrow[from=1-1, to=3-1]
	\arrow[from=1-3, to=3-3]
\end{tikzcd}
$$
Then, we will show that the vertical arrows are fully faithful embeddings with the same essential image, which will demonstrate that the top arrow
\begin{align}
\on{Whit}^*(\on{Fl}_G)_{x_0} \underset{\on{Aff}_{G, x_0}}{\otimes} \on{IKL}(G)_{\on{crit}, x_0} &\longrightarrow \QCoh(\ncheck/\Bcheck)_{x_0} \underset{\on{Aff}_{G, x_0}^{\on{spec}}}{\otimes} \on{IKL}(G)_{\on{crit}, x_0} \\ &\overset{\on{Id} \otimes \on{IFLE}_{x_0}}{\longrightarrow} \QCoh(\ncheck/\Bcheck)_{x_0} \underset{\on{Aff}_{G, x_0}^{\on{spec}}}{\otimes} \IndCoh^*(\on{Op}_{\Gcheck}^{\on{mer}} \underset{\on{LS}_{\Gcheck}^{\on{mer}}}{\times} \ncheck/\Bcheck)_{x_0}
\end{align}
is an equivalence. Since the first morphism of the composition is an equivalence, this will imply that the second morphism of the composition is also an equivalence.

Finally, we will show that $\IKLx$ and 
$$
\IndCoh^*(\on{Op}_{\Gcheck}^{\on{mer}} \underset{\on{LS}_{\Gcheck}^{\on{mer}}}{\times} \ncheck/\Bcheck)_{x_0} \simeq \IndCoh(\on{Op}_{\Gcheck, x_0}^{\on{mer}} \underset{\on{LS}_{\Gcheck, x_0}^{\on{mer}}}{\times} (\ncheck/\Bcheck)_{x_0})
$$
are both Iwahori-Hecke tempered, which will allow us to conclude that $\on{IFLE}_{x_0}$ is an equivalence.

\subsection{IFLE is a Pointwise Equivalence} \label{IFLE is a Pointwise Equivalence}
In this subsection, we will check the various claims in order to prove Proposition \ref{pointwise IFLE}.

\begin{prop}
    There is a commutative diagram
$$
\begin{tikzcd}
	{\on{Whit}^*(\on{Fl}_G)_{x_0} \underset{\on{Aff}_{G, x_0}}{\otimes} \IKLx} && {\QCoh(\ncheck/\Bcheck)_{x_0} \underset{\on{Aff}_{G, x_0}^{\on{spec}}}{\otimes} \IndCoh^*(\on{Op}_{\Gcheck}^{\on{mer}} \underset{\on{LS}_{\Gcheck}^{\on{mer}}}{\times} \ncheck/\Bcheck)_{x_0}} \\
	\\
	{\IndCoh(\on{Op}_{\Gcheck, x_0}^{\on{mer}})} & {\boldsymbol{=}} & {\IndCoh(\on{Op}_{\Gcheck, x_0}^{\on{mer}}).}
	\arrow[from=1-1, to=1-3]
	\arrow[from=1-1, to=3-1]
	\arrow[from=1-3, to=3-3]
\end{tikzcd}
$$
\end{prop}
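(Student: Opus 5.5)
We have to produce the three arrows and the commutativity datum; the model is Section 7.2 of \cite{GLC2}, with $\on{Sph}$ replaced by $\on{Aff}$ and $\on{Whit}^*(G)$ by $\on{Whit}^*(\on{Fl}_G)$.

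\emph{Top arrow.} Theorem \ref{Theorem B} gives a monoidal equivalence $\on{B}_{x_0} : \on{Aff}_{G,x_0} \simeq \on{Aff}_{\Gcheck,x_0}^{\on{spec}}$. By Theorem \ref{Theorem AB} (equivariance of $\on{AB}$ with respect to $\on{B}$), after dualizing, $\on{AB}_{x_0}$ identifies $\on{Whit}^*(\on{Fl}_G)_{x_0}$ with $\QCoh(\ncheck/\Bcheck)_{x_0}$ as right modules. Here we use that $\QCoh(\ncheck/\Bcheck)_{x_0}$ is self-dual, being rigid. By the Remark following the construction of $\on{IFLE}$, $\on{IFLE}_{x_0}$ is $\on{Aff}$-linear. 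The top arrow is therefore the composite $(\on{AB}_{x_0}^{\vee,-1}) \otimes \on{IFLE}_{x_0}$ over $\on{B}_{x_0}$.

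\emph{Left arrow.} Since $\on{Aff}_{G,x_0}$ is the $I$-invariants of $\Dmod(\on{Fl}_G)$, we have
$$
\on{Whit}^*(\on{Fl}_G)_{x_0} \underset{\on{Aff}_{G,x_0}}{\otimes} \mathcal{C}^I \simeq \on{Whit}^*(\mathcal{C})
$$
for any $\mathcal{C}$ with a critical level $LG$-action, up to the usual renormalization issues. Concretely, the functor $\mathcal{C}^I \to \mathcal{C} \to \on{Whit}^*(\mathcal{C})$ factors through the relative tensor product. We take $\mathcal{C}=\gmod_{\on{crit},x_0}$ and compose with $\overline{\on{DS}}^{\on{enh,rfnd}} : \on{Whit}^*(\gmod_{\on{crit}})_{x_0} \to \IndCoh(\on{Op}^{\on{mer}}_{\Gcheck,x_0})$. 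This is the same functor used in the proof that $\on{IFLE}$ preserves compacts.

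\emph{Right arrow.} It is induced by the $*$-pushforward along
$$
p:\on{Op}^{\on{mer}}_{\Gcheck,x_0}\times_{\on{LS}^{\on{mer}}_{\Gcheck,x_0}}(\ncheck/\Bcheck)_{x_0}\to \on{Op}^{\on{mer}}_{\Gcheck,x_0}.
$$
It descends to the relative tensor product because $\QCoh(\ncheck/\Bcheck)_{x_0}\otimes_{\on{Aff}^{\on{spec}}}-$ applied to $\IndCoh$ of a space over $\ncheck/\Bcheck$ amounts to pushing forward along the Springer-type map. One checks this via the description of $\on{Aff}^{\on{spec}}$ as $\IndCoh$ of the Steinberg stack acting by convolution, using ind-properness of $\ncheck/\Bcheck\to\on{LS}^{\on{mer}}$ noted in Subsection \ref{Main Characters of IFLE}.

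\emph{Commutativity.} By the construction of $\on{IFLE}$ as a morphism to
$$
\on{Fun}_{\IndCoh^!(\on{Op}^{\on{mer}})}(\dots,\IndCoh^*(\on{Op}^{\on{mer}})),
$$
the composite $p_*\circ\on{IFLE}_{x_0}$ is tautologically the evaluation at the unit. That evaluation is $\overline{\on{DS}}^{\on{enh,rfnd}}$ restricted along $\IKLx\to\gmod_{\on{crit},x_0}\to\on{Whit}^*$. This gives commutativity before tensoring, as functors out of $\IKLx$.

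The main obstacle is upgrading this to commutativity of functors out of the relative tensor product. That requires the two $\on{Aff}$-balancing data to agree: on the left, the tautological $\on{Aff}_G$-invariance of $\on{Whit}^*$ of a $\gmod$-object; on the right, the one coming from $\on{B}$ and the Steinberg convolution. I would reduce this, as in Subsection \ref{B is a Pointwise Equivalence}, to the fact that both balancings factor through the action of $\QCoh(\on{LS}_{\Gcheck}(D_{x_0}^\times))$. On the geometric side this action comes through $\on{bi\text{-}Whit}$; on the spectral side it is pullback via $\gcheck/\Gcheck$. These agree by the construction of $\on{B}$ from $\on{AB}$. The automorphism ambiguity $\alpha,\beta$ between $\on{AB}_{x_0}$ and $\on{AB}^{\on{classical}}$ is harmless here, since only $\on{AB}_{x_0}$ enters.
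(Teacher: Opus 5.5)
Your three arrows agree with the paper's. The commutativity step, however, has a genuine gap.

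\textbf{What you actually prove is too weak.} You show only that the two composites $\IKLx \to \IndCoh(\on{Op}^{\on{mer}}_{\Gcheck,x_0})$ agree, i.e.\ that the two functors agree after restriction along $c \mapsto \boldsymbol{1}_{\on{Aff}_G}\otimes c$. That does not determine a functor out of the relative tensor product. By the monadicity the paper uses for the right vertical arrow, $\QCoh(\ncheck/\Bcheck)_{x_0}\otimes_{\on{Aff}^{\on{spec}}_{\Gcheck,x_0}}\boldsymbol{M}$ is the category of modules in $\boldsymbol{M}$ over the algebra $A=\on{pr}_1^!\mathcal{O}$ of the Steinberg stack. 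So the objects $\boldsymbol{1}\otimes c$ generate, but $\on{Hom}(\boldsymbol{1}\otimes c,\boldsymbol{1}\otimes c')=\on{Hom}(c,A\star c')$. You would still have to show that your identification of the restricted functors intertwines the two $A$-module structures.

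\textbf{Your proposed reduction fails.} The balancing is with respect to $\on{Aff}_{G,x_0}$, and its action on $\IKLx$ does not factor through $\QCoh(\on{LS}_{\Gcheck}(D_{x_0}^{\times}))$. That category acts only on the outer factors ($\QCoh(\ncheck/\Bcheck)_{x_0}$, resp.\ $\on{Whit}^*(\on{Fl}_G)_{x_0}$ via $\on{bi\text{-}Whit}$, and $\IndCoh(\on{Op}^{\on{mer}}_{\Gcheck,x_0})$), commuting with the $\on{Aff}$-actions. So $\QCoh(\on{LS})$-linearity of the vertical arrows is extra structure and says nothing about the $\on{Aff}$-balancing. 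For instance, the balancing isomorphisms $\mathcal{O}(\lambda)\otimes c\simeq\boldsymbol{1}\otimes(J_\lambda\star c)$ are purely $\on{Aff}$-data, since $\mathcal{O}(\lambda)$ is not pulled back from $\gcheck/\Gcheck$. The argument in the subsection on $\on{B}$ answers a different question: whether the monoidal functor $\on{Aff}_{G,x_0}\to\on{End}(\QCoh(\ncheck/\Bcheck))$ lifts compatibly to $\on{End}_{\QCoh(\gcheck/\Gcheck)}$. It does not transfer to matching the linearity data of two functors.

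\textbf{What the paper does instead.} It proves the identification uncurried, as a commutative square in $\FactModCat$ obtained by running Appendix E.10 of \cite{GLC2} for pairs:
\begin{itemize}
\item top arrow $\on{IFLE}$;
\item left arrow $\IKL\to\on{Whit}^!(\on{Fl}_G)\otimes\IndCoh^*(\on{Op}^{\on{mer}}_{\Gcheck,x_0})$, the dual of $\on{Whit}^*(\on{Fl}_G)\otimes_{\on{Aff}_G}\mathcal{M}^I\to\on{Whit}^*(\mathcal{M})$ for $\mathcal{M}=\gmod_{\on{crit}}$;
\item right arrow the $*$-pushforward along $\on{Op}^{\on{mer}}_{\Gcheck}\times_{\on{LS}^{\on{mer}}_{\Gcheck}}\ncheck/\Bcheck\to\on{Op}^{\on{mer}}_{\Gcheck}\times\ncheck/\Bcheck$;
\item bottom arrow through $\on{AB}$.
\end{itemize}
This compares the full coaction functors valued in $\on{Whit}^!(\on{Fl}_G)\otimes\IndCoh^*(\on{Op}^{\on{mer}}_{\Gcheck,x_0})$, i.e.\ pairings against all Whittaker objects rather than just the unit. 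Concretely, it matches the coaction $\IKL\to\QCoh(\ncheck/\Bcheck)\otimes\IKL$ through $\on{B}^{\on{nv}}$, from which $\on{IFLE}$ was built, with the Whittaker coaction under $\on{AB}$. Only then is the square curried, using the $\on{B}$-linearity of $\on{AB}$ and $\on{IFLE}$: an $\on{Aff}$-linear functor into $\on{Whit}^!(\on{Fl}_G)\otimes\mathcal{D}$ is the same as a functor $\on{Whit}^*(\on{Fl}_G)\otimes_{\on{Aff}_G}(-)\to\mathcal{D}$. The result is then specialized to $x_0$.

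That uncurried identification is the ingredient your argument is missing.
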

\begin{proof}
    We claim that there is a commutative diagram in $\FactModCat$
$$
    \begin{tikzcd}
	{\IKL} && {\IndCoh^*(\on{Op}_{\Gcheck}^{\on{mer}} \underset{\on{LS}_{\Gcheck}^{\on{mer}}}{\times} \ncheck/\Bcheck)} \\
	\\
	{\on{Whit}^!(\on{Fl}_G) \otimes \IndCoh^*(\on{Op}_{\Gcheck, x_0}^{\on{mer}})} && {\QCoh(\ncheck/\Bcheck) \otimes \IndCoh^*(\on{Op}_{\Gcheck, x_0}^{\on{mer}}).}
	\arrow[from=1-1, to=1-3]
	\arrow[from=1-1, to=3-1]
	\arrow[from=1-3, to=3-3]
    \arrow[from=3-1, to=3-3]
\end{tikzcd}
$$
where the right vertical arrow is given by $*$-pushforward along
$$
\on{Op}_{\Gcheck}^{\on{mer}} \underset{\on{LS}_{\Gcheck}^{\on{mer}}}{\times} \ncheck/\Bcheck \longrightarrow \on{Op}_{\Gcheck}^{\on{mer}} \times \ncheck/\Bcheck.
$$
For the left vertical arrow, consider the following paradigm: for $\mathcal{M}$ a category with an action of $\Dmodh(LG)$, there is a map
$$
    \Dmodh(\on{Fl}_G) \underset{\on{Aff}_G}{\otimes} \mathcal{M}^I \to \mathcal{M}
$$
taking Whittaker coinvariants yields
$$
\on{Whit}^*(\on{Fl}_G) \underset{\on{Aff}_G}{\otimes} \mathcal{M}^I \to \on{Whit}^*(\mathcal{M})
$$
and by duality we obtain
$$
\mathcal{M}^I \to \on{Whit}^!(\on{Fl}_G) \otimes \on{Whit}^*(\mathcal{M}).
$$
The left vertical arrow is this functor with $\mathcal{M} = \gmod_{\on{crit}}$.

The above commutative diagram can be constructed by following the construction in Appendix E.10 of \cite{GLC2} and upgrading it to pairs \emph{mutatis mutandis}.

Then we can invoke the compatibility of $\on{AB}$ and $\on{IFLE}$ with $\on{B}$ to curry and obtain
$$
\begin{tikzcd}
	{\on{Whit}^*(\on{Fl}_G) \underset{\on{Aff}_{G}}{\otimes} \IKL} && {\QCoh(\ncheck/\Bcheck) \underset{\on{Aff}_{G}^{\on{spec}}}{\otimes} \IndCoh^*(\on{Op}_{\Gcheck}^{\on{mer}} \underset{\on{LS}_{\Gcheck}^{\on{mer}}}{\times} \ncheck/\Bcheck)} \\
	\\
	{\IndCoh^*(\on{Op}_{\Gcheck}^{\on{mer}})} & {\boldsymbol{=}} & {\IndCoh^*(\on{Op}_{\Gcheck}^{\on{mer}}).}
	\arrow[from=1-1, to=1-3]
	\arrow[from=1-1, to=3-1]
	\arrow[from=1-3, to=3-3]
\end{tikzcd}
$$

The desired diagram is the specialization of the above to $x_0$.
\end{proof}

\begin{prop}
    The right vertical arrow
    $$
    \QCoh(\ncheck/\Bcheck)_{x_0} \underset{\on{Aff}_{G, x_0}^{\on{spec}}}{\otimes} \IndCoh(\on{Op}_{\Gcheck, x_0}^{\on{mer}} \underset{\on{LS}_{\Gcheck, x_0}^{\on{mer}}}{\times} (\ncheck/\Bcheck)_{x_0}) \longrightarrow 
    \IndCoh(\on{Op}_{\Gcheck, x_0}^{\on{mer}})
    $$
    is fully faithful with essential image
    $$
    \IndCoh(\on{Op}_{\Gcheck, x_0}^{\on{mer}})_{\on{Op}_{\Gcheck, x_0}^{\on{nilp-mon}}} \subset \IndCoh(\on{Op}_{\Gcheck, x_0}^{\on{mer}}),
    $$
    the full subcategory of $\IndCoh(\on{Op}_{\Gcheck, x_0}^{\on{mer}})$ of objects that are set-theoretically supported on
    $$
    \on{Op}_{\Gcheck, x_0}^{\on{nilp-mon}} = \on{Op}_{\Gcheck, x_0}^{\on{mer}} \underset{\on{LS}_{\Gcheck, x_0}^{\on{mer}}}{\times} \on{LS}_{\Gcheck, x_0}^{\on{nilp-mon}} \subset \on{Op}_{\Gcheck, x_0}^{\on{mer}}.
    $$
\end{prop}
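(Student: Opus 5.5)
We compute the tensor product directly on the spectral side. Write $\mathcal{A} = \on{Aff}_{\Gcheck, x_0}^{\on{spec}} = \IndCoh(\ncheck/\Bcheck \underset{\gcheck/\Gcheck}{\times} \ncheck/\Bcheck)$, $\mathcal{Q} = \QCoh(\ncheck/\Bcheck)_{x_0}$, and $\mathcal{O}p = \on{Op}_{\Gcheck, x_0}^{\on{mer}}$. The plan has two stages. First, identify the module $\IndCoh(\mathcal{O}p \underset{\on{LS}_{\Gcheck, x_0}^{\on{mer}}}{\times} \ncheck/\Bcheck)$ as an induced module of the form $\mathcal{A} \otimes_{\mathcal{Q}'} (-)$ over an appropriate base. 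Second, reduce the relative tensor product to a base change over the formal completion of $\on{LS}_{\Gcheck, x_0}^{\on{mer}}$ along $\on{LS}_{\Gcheck, x_0}^{\on{nilp-mon}}$.

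Concretely, I will use that $\ncheck/\Bcheck \to \on{LS}_{\Gcheck, x_0}^{\on{mer}}$ factors through $\gcheck/\Gcheck$ and lands set-theoretically in $\on{LS}_{\Gcheck, x_0}^{\on{nilp-mon}}$. Since the map is proper, the fiber product $\mathcal{O}p \underset{\on{LS}^{\on{mer}}}{\times} \ncheck/\Bcheck$ only sees the formal completion $\mathcal{O}p^\wedge_{\on{nilp-mon}}$. The key input is the Morita-theoretic statement already used in the construction of $\on{B}^{\on{classical}}$ and in the description of $\on{Aff}_{\Gcheck, x_0}^{\on{spec, temp}}$ as $\on{Fun}_{\QCoh((\on{LS}^{\on{mer}}_{\Gcheck,x_0})^\wedge_{\on{nilp-mon}})}(\mathcal{Q}, \mathcal{Q})$. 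From it I expect the following. As a left $\mathcal{A}$-module, $\mathcal{Q}$ exhibits $\QCoh((\on{LS}^{\on{mer}}_{\Gcheck,x_0})^\wedge_{\on{nilp-mon}})\on{-mod}$ as equivalent to the tempered part of $\mathcal{A}\on{-mod}$, via $\boldsymbol{N} \mapsto \mathcal{Q} \otimes_{\QCoh(\ldots^\wedge)} \boldsymbol{N}$ with inverse $\mathcal{Q} \otimes_{\mathcal{A}} -$. This uses that $\mathcal{Q}$ is rigid and that $\ncheck/\Bcheck \to (\on{LS}^{\on{mer}})^\wedge_{\on{nilp-mon}}$ is proper and surjective, so that $\QCoh$ of the completion is Morita-equivalent to $\mathcal{A}^{\on{temp}}$, by $1$-affineness and descent along proper surjective maps.

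Applying this to $\boldsymbol{M} = \IndCoh(\mathcal{O}p \times_{\on{LS}} \ncheck/\Bcheck)$, I would first check that $\boldsymbol{M} \simeq \mathcal{Q} \otimes_{\QCoh(\on{LS}^{\wedge})} \IndCoh(\mathcal{O}p^\wedge_{\on{nilp-mon}})$, equivalently $\IndCoh(\mathcal{O}p)_{\on{Op}^{\on{nilp-mon}}}$. This is a base-change (Künneth) statement for $\IndCoh$ along the ind-placid map $\mathcal{O}p \to \on{LS}^{\on{mer}}$, analogous to the argument for $\on{Op}^{\on{mon-free}}$ in Section 7 of \cite{GLC2}. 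It would then follow that
$$
\mathcal{Q} \otimes_{\mathcal{A}} \boldsymbol{M} \simeq \IndCoh(\mathcal{O}p)_{\on{Op}_{\Gcheck,x_0}^{\on{nilp-mon}}}.
$$
I would then verify that the composite of this equivalence with the inclusion into $\IndCoh(\mathcal{O}p)$ agrees with the right vertical arrow, namely $*$-pushforward along the projection followed by the canonical pairing. This check is diagrammatic: both arrows are built from $\on{act}$ of $\mathcal{Q}$ and pushforward, which commute with base change. Full faithfulness and the description of the essential image then follow, because the supported subcategory is the colimit-closed image of $\IndCoh$ of the completion.

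The main obstacle is the Morita step. We need to know that $\mathcal{Q} \otimes_{\mathcal{A}} -$ composed with $\mathcal{Q} \otimes_{\QCoh(\on{LS}^\wedge)} -$ is the identity on modules over the completion. Equivalently, we need $\mathcal{Q} \otimes_{\mathcal{A}} \mathcal{Q} \simeq \QCoh((\on{LS}^{\on{mer}}_{\Gcheck,x_0})^\wedge_{\on{nilp-mon}})$ as a bimodule. My first attempt would compute this via the bar construction, using that $\mathcal{A}$ acts through $\on{End}_{\QCoh(\on{LS}^\wedge)}(\mathcal{Q})$ once we pass to the tempered part, so $\IndCoh$ versus $\QCoh$ should not matter after tensoring with $\mathcal{Q}$. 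I would then invoke descent for the proper surjection $\ncheck/\Bcheck \to (\on{LS}^{\on{mer}})^\wedge_{\on{nilp-mon}}$, in the form of Gaitsgory's result that $\QCoh$ of a formal completion is $1$-affine and satisfies descent along proper covers.
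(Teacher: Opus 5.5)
Your argument is correct in outline, but it takes a different route from the paper.

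\textbf{The paper's route.} The paper argues by monadicity. Write $\boldsymbol{M}=\IndCoh(\on{Op}^{\on{mer}}_{\Gcheck,x_0}\times_{\on{LS}^{\on{mer}}_{\Gcheck,x_0}}(\ncheck/\Bcheck)_{x_0})$. By rigidity of $\QCoh(\ncheck/\Bcheck)_{x_0}$ and $\on{Aff}^{\on{spec}}_{\Gcheck,x_0}$, the functor $\boldsymbol{M}\to\mathcal{Q}\otimes_{\mathcal{A}}\boldsymbol{M}$ has a continuous monadic right adjoint. The $*$-pushforward from $\boldsymbol{M}$ to the supported subcategory also has one, because the Springer map is proper and surjective. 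The comparison functor intertwines the two monads. Both monads are the action on $\boldsymbol{M}$ of the algebra $\on{pr}_1^!\mathcal{O}_{(\ncheck/\Bcheck)_{x_0}}$ in $\mathcal{A}$, so the comparison only has to be checked on underlying endofunctors.

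\textbf{Your route.} You first write $\boldsymbol{M}\simeq\mathcal{Q}\otimes_{\QCoh(\on{LS}^\wedge)}\IndCoh(\on{Op}^\wedge)$, with $\on{LS}^\wedge=(\on{LS}^{\on{mer}}_{\Gcheck,x_0})^\wedge_{\on{nilp-mon}}$. The paper uses this base change only in the next proposition, to prove Iwahori--Hecke temperedness of the spectral side. You then use that $\mathcal{Q}$ is an invertible bimodule between $\mathcal{A}^{\on{temp}}$ and $\QCoh(\on{LS}^\wedge)$.

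\textbf{Comparison.} The monadic route is more economical. It needs only conservativity of $!$-pullback on supported objects and a comparison of monads, and it never has to compare $\IndCoh$ and $\QCoh$ of the Steinberg stack. Your route needs two extra inputs:
\begin{itemize}
\item the idempotence $\mathcal{A}^{\on{temp}}\otimes_{\mathcal{A}}\mathcal{A}^{\on{temp}}\simeq\mathcal{A}^{\on{temp}}$, which is implicit in the paper's assertion that $\boldsymbol{M}^{\on{temp}}\to\boldsymbol{M}$ is a colocalization;
\item the ``generator'' half of Morita theory, $\mathcal{Q}\otimes_{\mathcal{A}^{\on{temp}}}\mathcal{Q}\simeq\QCoh(\on{LS}^\wedge)$.
\end{itemize}
In exchange, your route identifies the functor by the projection formula and proves temperedness of the spectral side at the same time.

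\textbf{On the generator half.} You do not need descent along proper covers here, which is more than the situation requires. Since $\QCoh(\on{LS}^\wedge)$ is rigid and $\mathcal{Q}$ is dualizable, it suffices that the image of the pairing $\mathcal{F}\boxtimes\mathcal{G}\mapsto p_*(\mathcal{F}\otimes\mathcal{G})$ generates $\QCoh(\on{LS}^\wedge)$ as a module over itself. That image contains $p_*\mathcal{O}_{\tilde{\check{\mathcal{N}}}}\simeq\mathcal{O}_{\check{\mathcal{N}}}$. This is the Koszul quotient of $\mathcal{O}_{\gcheck}$ by the regular sequence of basic invariant polynomials, so it generates the objects set-theoretically supported on the nilpotent cone. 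This is exactly where properness and surjectivity of the Springer map enter, as in the paper's argument.
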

\begin{proof}
    $\QCoh(\ncheck/\Bcheck)_{x_0}$ and $\on{Aff}_{\Gcheck, x_0}^{\on{spec}}$ are both rigid like their spherical counterparts, so we can apply the argument in the proof of Proposition 3.6.5 in \cite{GLC2} largely unchanged: the functor
    \begin{align}
    \IndCoh(\on{Op}_{\Gcheck, x_0}^{\on{mer}} \underset{\on{LS}_{\Gcheck, x_0}^{\on{mer}}}{\times} (\ncheck/\Bcheck)_{x_0}) &\simeq \QCoh(\ncheck/\Bcheck)_{x_0} \underset{\QCoh(\ncheck/\Bcheck)_{x_0}}{\otimes} \IndCoh(\on{Op}_{\Gcheck, x_0}^{\on{mer}} \underset{\on{LS}_{\Gcheck, x_0}^{\on{mer}}}{\times} (\ncheck/\Bcheck)_{x_0}) \\ &\to \QCoh(\ncheck/\Bcheck)_{x_0} \underset{\on{Aff}_{G, x_0}^{\on{spec}}}{\otimes} \IndCoh(\on{Op}_{\Gcheck, x_0}^{\on{mer}} \underset{\on{LS}_{\Gcheck, x_0}^{\on{mer}}}{\times} (\ncheck/\Bcheck)_{x_0})
    \end{align}
    has a continuous and monadic right adjoint. The same is true of the pushforward
    $$
    \IndCoh(\on{Op}_{\Gcheck, x_0}^{\on{mer}} \underset{\on{LS}_{\Gcheck, x_0}^{\on{mer}}}{\times} (\ncheck/\Bcheck)_{x_0}) \to \IndCoh(\on{Op}_{\Gcheck, x_0}^{\on{mer}})_{\on{Op}_{\Gcheck, x_0}^{\on{nilp-mon}}}
    $$
    because
    $$
    \ncheck/\Bcheck \to \on{LS_{\Gcheck, x_0}^{\on{nilp-mon}}}
    $$
    is the springer resolution, and thus proper and surjective. The functor
    $$
    \QCoh(\ncheck/\Bcheck)_{x_0} \underset{\on{Aff}_{G, x_0}^{\on{spec}}}{\otimes} \IndCoh(\on{Op}_{\Gcheck, x_0}^{\on{mer}} \underset{\on{LS}_{\Gcheck, x_0}^{\on{mer}}}{\times} (\ncheck/\Bcheck)_{x_0}) \longrightarrow 
    \IndCoh(\on{Op}_{\Gcheck, x_0}^{\on{mer}})
    $$
    induces a morphism between the two monads, and to see that it is an isomorphism of monads, it suffices to show that it is an isomorphism on the level of endofunctors. However, both monads are given by the action of the algebra object
    $$
    (\on{pr_1} : (\ncheck/\Bcheck)_{x_0} \underset{\on{LS}_{\Gcheck, x_0}^{\on{mer}}}{\times} (\ncheck/\Bcheck)_{x_0} \to (\ncheck/\Bcheck)_{x_0})^! \mathcal{O}_{(\ncheck/\Bcheck)_{x_0}}
    $$
    of $\on{Aff}_{\Gcheck, x_0}^{\on{spec}}$ on $\IndCoh(\on{Op}_{\Gcheck, x_0}^{\on{mer}} \underset{\on{LS}_{\Gcheck, x_0}^{\on{mer}}}{\times} (\ncheck/\Bcheck)_{x_0})$.
\end{proof}

\begin{prop}
    The left vertical arrow
    $$
    \on{Whit}^*(\on{Fl}_G)_{x_0} \underset{\on{Aff}_{G, x_0}}{\otimes} \IKLx \longrightarrow \IndCoh(\on{Op}_{\Gcheck, x_0}^{\on{mer}})
    $$
    is fully faithful with essential image
    $$
    \IndCoh(\on{Op}_{\Gcheck, x_0}^{\on{mer}})_{\on{Op}_{\Gcheck, x_0}^{\on{nilp-mon}}} \subset \IndCoh(\on{Op}_{\Gcheck, x_0}^{\on{mer}}).
    $$ 
\end{prop}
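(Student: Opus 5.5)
We reduce the left vertical arrow to the unramified situation of Section 7 of \cite{GLC2}. The plan is to express $\IKLx$ as an $\on{Aff}_{G,x_0}$-module obtained from $\gmod_{\on{crit},x_0}$, and then compute $\on{Whit}^*(\on{Fl}_G)_{x_0} \underset{\on{Aff}_{G, x_0}}{\otimes} \IKLx$ directly.

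\textbf{Step 1: reduction to Whittaker coinvariants.} The category $\gmod_{\on{crit},x_0}$ carries a strong action of $LG$, and $I$ is prounipotent-by-torus. Hence the functor
$$
\Dmodh(\on{Fl}_G)_{x_0} \underset{\on{Aff}_{G,x_0}}{\otimes} \gmod_{\on{crit},x_0}^I \longrightarrow \gmod_{\on{crit},x_0}
$$
is a Morita-type comparison. Taking Whittaker coinvariants on both sides then identifies the source of the left vertical arrow with the essential image of
$$
\on{Whit}^*(\on{Fl}_G)_{x_0}\otimes_{\on{Aff}_{G,x_0}}\IKLx \to \on{Whit}^*(\gmod_{\on{crit},x_0}) \simeq \IndCoh(\on{Op}_{\Gcheck, x_0}^{\on{mer}}).
$$
The last identification is the pointwise $\overline{\on{DS}}^{\on{enh,rfnd}}$ equivalence of \cite{GLC2}.

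\textbf{Step 2: full faithfulness via a monad comparison.} I would follow the proof of the previous proposition and compare monads rather than attempt Morita theory directly. The functor $\IKLx \to \on{Whit}^*(\on{Fl}_G)_{x_0}\otimes_{\on{Aff}_{G,x_0}} \IKLx$ has a continuous monadic right adjoint, because $\on{Aff}_{G,x_0}$ is rigid (it is equivalent to $\on{Aff}^{\on{spec}}_{\Gcheck,x_0}$ by Theorem \ref{Theorem B}). The monad is given by acting with the algebra object of $\on{Aff}_{G,x_0}$ corresponding under $\on{AB}_{x_0}$ and $\on{B}_{x_0}$ to $\on{pr}_1^!\mathcal{O}$, namely the "Whittaker averaging" kernel $\on{Av}^{I}\circ\on{Av}_{\on{Whit}}$.

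The composite $\IKLx \to \IndCoh(\on{Op}^{\on{mer}}_{\Gcheck,x_0})$ is the functor $\mathcal{C}^I\to\mathcal{C}\to\on{Whit}^*(\mathcal{C})$. Its right adjoint, restricted to the image, gives a monad on $\IKLx$. The commutative diagram from the preceding proposition produces a map between these two monads. It is an isomorphism of underlying endofunctors because both are computed by the same convolution kernel $\on{Av}^I_*\circ \on{Av}^{\on{Whit}}_!$ acting on $\IKLx$. This gives full faithfulness.

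\textbf{Step 3: identifying the essential image.} The image is generated by the images of the compact generators $\mathbb{M}^{\check\lambda}$, that is, by Drinfeld--Sokolov reductions of Verma-type modules. These are known, by the Frenkel--Gaitsgory description of Iwahori-monodromic modules, to be supported on opers with nilpotent monodromy. This gives containment in $\IndCoh(\on{Op}^{\on{mer}}_{\Gcheck,x_0})_{\on{Op}^{\on{nilp-mon}}_{\Gcheck,x_0}}$.

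For generation of the whole subcategory, I would use the compatibility of $\on{IFLE}$ with $\on{B}$ together with the previous proposition, which shows that the target subcategory is generated under the $\QCoh((\ncheck/\Bcheck)_{x_0})$-twists by a single object. Matching this with $\mathbb{M}^{0}\star J_\lambda$ then finishes the argument.

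\textbf{Main obstacle.} The hardest point is the monad isomorphism in Step 2. It requires an explicit geometric identification that $\on{Av}^{I}_*\on{Av}^{\on{Whit}}_!$ on $\gmod_{\on{crit},x_0}^I$ corresponds, under $\on{B}_{x_0}$, to the object $\on{pr}_1^!\mathcal{O}$. I would first try to deduce this from the $\on{AB}$-compatibility built into the construction of $\on{B}$ in Subsection \ref{Construction of B}.
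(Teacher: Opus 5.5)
The gap is in the generation half of Step 3, and it has two parts.

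First, the objects $\mathbb{M}^0\star J_\lambda$ cannot generate $\IndCoh(\on{Op}^{\on{mer}}_{\Gcheck,x_0})_{\on{Op}^{\on{nilp-mon}}_{\Gcheck,x_0}}$. The $LG$-action on $\gmod_{\on{crit}}$, and hence the $\on{Aff}_{G,x_0}$-action on $\IKLx$, commutes with the action of $\QCoh(\on{Op}^{\on{mer}}_{\Gcheck,x_0})$ coming from the center. On the spectral side, twisting by $\mathcal{O}(\lambda)$ pulled back from $(\ncheck/\Bcheck)_{x_0}$ is likewise $\QCoh(\on{Op}^{\on{mer}}_{\Gcheck,x_0})$-linear. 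So every $\mathbb{M}^0\star J_\lambda$ is supported where $\mathbb{M}^0$ is, namely on the single component $\on{Op}^{0-\on{nilp}}_{\Gcheck,x_0}$. This is the Frenkel--Gaitsgory fact that $\Gamma(\on{Fl}_G,-)$, i.e.\ convolution with $\mathbb{M}^0$, lands in modules supported on $\on{Op}^{\on{nilp}}$. But $(\on{Op}^{\on{nilp-mon}}_{\Gcheck,x_0})_{\on{red}}$ is the disjoint union of the $\on{Op}^{\check\lambda-\on{nilp}}_{\Gcheck,x_0}$ over all $\check\lambda$ with $\check\lambda+\check\rho$ dominant. Twists of one object matched with $\mathbb{M}^0$ therefore miss every other component.

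Second, the appeal to the previous proposition together with compatibility of $\on{IFLE}$ with $\on{B}$ is circular. That proposition only describes the spectral side. Transferring generation to $\IKLx$ requires knowing what $\on{IFLE}_{x_0}$ hits, yet the present proposition is one of the inputs used to prove that $\on{IFLE}_{x_0}$ is an equivalence. Compatibility with $\on{B}$ tells you how $\on{IFLE}_{x_0}$ intertwines twists, not what $\on{IFLE}_{x_0}(\mathbb{M}^0)$ is.

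What is missing is an independent computation of the images of all the Verma generators. The paper shows that $\mathbb{M}^{\check\lambda}$ goes to $\mathcal{O}_{\on{Op}^{\check\lambda-\on{nilp}}_{\Gcheck,x_0}}$ for each such $\check\lambda$, adapting Frenkel--Gaitsgory's treatment of Weyl modules:
\begin{itemize}
\item one maps $\mathcal{O}_{\on{Op}^{\check\lambda-\on{nilp}}}\simeq\on{End}(\mathbb{M}^{\check\lambda})\simeq(\mathbb{M}^{\check\lambda})^{I^0}_{\check\lambda}$ to $H^{\frac{\infty}{2}}(\mathfrak{n}((t)),\mathfrak{n}[[t]],\mathbb{M}^{\check\lambda}\otimes\Psi_0)$;
\item injectivity follows from irreducibility of the source over the Lie algebroid $N^*_{\on{Op}^{\check\lambda-\on{nilp}}/\on{Op}^{\on{mer}}}$;
\item a character comparison then shows the map is an isomorphism.
\end{itemize}
Since these structure sheaves compactly generate the support subcategory, full faithfulness then gives essential surjectivity.

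Your containment argument via Frenkel--Gaitsgory's support results is fine; the paper instead factors the composite through $\on{IFLE}_{x_0}$.

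The monad comparison in Step 2 is a workable alternative to the paper's route. The paper uses full faithfulness of $\Dmodh(\on{Fl}_G)_{x_0}\otimes_{\on{Aff}_{G,x_0}}\boldsymbol{C}^I\to\boldsymbol{C}$, via ind-properness of $\on{Fl}_G$ as in the spherical argument of GLC2, Section 7.3, and then applies $\on{Whit}^*$. However, you misplace the difficulty. Both monads on $\IKLx$ are purely geometric: convolution with the internal endomorphisms in $\on{Aff}_{G,x_0}$ of the image of the unit in $\on{Whit}^*(\on{Fl}_G)_{x_0}$, versus $\on{Av}^I_*\circ\on{Av}^{\on{Whit}}$. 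They agree by a universal computation, up to the renormalization of $\on{Aff}_{G,x_0}$, so no matching with $\on{pr}_1^!\mathcal{O}$ via $\on{B}_{x_0}$ is needed. You would still need to justify monadicity, i.e.\ conservativity of the right adjoint, which amounts to the image of the unit generating $\on{Whit}^*(\on{Fl}_G)_{x_0}$ as an $\on{Aff}_{G,x_0}$-module.
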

\begin{proof}
    Like the affine grassmannian, the affine flag variety $\on{Fl}_{\Gcheck, x_0}$ is also ind-proper. Thus, the proof of full faithfulness is similar to the argument given in Section 7.3 of \cite{GLC2}: just as in the spherical case, for a category $\boldsymbol{C}$ with an action of the loop group $LG$ at the critical level,
    $$
    \boldsymbol{C}^{\on{Aff-gen}} = \Dmodh(\on{Fl}_{G, x_0}) \underset{\on{Aff_{G, x_0}}}{\otimes} \boldsymbol{C}^I \longrightarrow \boldsymbol{C}
    $$
    is fully faithful and has a continuous right adjoint. But this implies that 
    $$
    \on{Whit}^*(\boldsymbol{C}^{\on{Aff-gen}}) = \on{Whit}^*(\on{Fl}_G)_{x_0} \underset{\on{Aff_{G, x_0}}}{\otimes} \boldsymbol{C}^I \longrightarrow \on{Whit}^*(\boldsymbol{C})
    $$
    is fully faithful. Taking $\boldsymbol{C} = \gmod_{\on{crit}}$ gives us fully faithfulness of
    $$
    \on{Whit}^*(\on{Fl}_G)_{x_0} \underset{\on{Aff}_{G, x_0}}{\otimes} \IKLx \longrightarrow \on{Whit}^*(\gmod_{\on{crit}}) \simeq \IndCoh(\on{Op}_{\Gcheck, x_0}^{\on{mer}})
    $$
    as desired.

    Furthermore, the functor
    $$
    \IKLx \overset{c \ \mapsto \ \boldsymbol{1}_{\on{Aff}_G} \otimes c}{\longrightarrow} \on{Whit}^*(\on{Fl}_G)_{x_0} \underset{\on{Aff}_{G, x_0}}{\otimes} \IKLx
    $$
    generates the target; therefore, to  show that the essential image of 
    $$
    \on{Whit}^*(\on{Fl}_G)_{x_0} \underset{\on{Aff}_{G, x_0}}{\otimes} \IKLx \to \IndCoh(\on{Op}_{\Gcheck, x_0}^{\on{mer}})
    $$
    is contained in 
    $$
    \IndCoh(\on{Op}_{\Gcheck, x_0}^{\on{mer}})_{\on{Op}_{\Gcheck, x_0}^{\on{nilp-mon}}},
    $$
    it suffices to show that the essential image of the composition
    $$
    \IKLx \to \on{Whit}^*(\on{Fl}_G)_{x_0} \underset{\on{Aff}_{G, x_0}}{\otimes} \IKLx \to \IndCoh(\on{Op}_{\Gcheck, x_0}^{\on{mer}})
    $$
    is contained in $\IndCoh(\on{Op}_{\Gcheck, x_0}^{\on{mer}})_{\on{Op}_{\Gcheck, x_0}^{\on{nilp-mon}}}$. However, the above composition is isomorphic to the composition
    $$
    \IKLx \overset{\on{IFLE}_{x_0}}{\longrightarrow} \IndCoh(\on{Op}_{\Gcheck, x_0}^{\on{mer}} \underset{\on{LS}_{\Gcheck, x_0}^{\on{mer}}}{\times} (\ncheck/\Bcheck)_{x_0}) \to 
    \IndCoh(\on{Op}_{\Gcheck, x_0}^{\on{mer}})
    $$
    whose essential image is clearly contained in $\IndCoh(\on{Op}_{\Gcheck, x_0}^{\on{mer}})_{\on{Op}_{\Gcheck, x_0}^{\on{nilp-mon}}}$.

    Now, it remains to show that 
    $$
    \on{Whit}^*(\on{Fl}_G)_{x_0} \underset{\on{Aff}_{G, x_0}}{\otimes} \IKLx \to \IndCoh(\on{Op}_{\Gcheck, x_0}^{\on{mer}})_{\on{Op}_{\Gcheck, x_0}^{\on{nilp-mon}}}
    $$
    is essentially surjective.
    
    Since the functor is fully faithful, it suffices to show that it generates the target; for which, in turn, it suffices to show that the composition
    $$
    \IKLx \to \on{Whit}^*(\on{Fl}_G)_{x_0} \underset{\on{Aff}_{G, x_0}}{\otimes} \IKLx \to \IndCoh(\on{Op}_{\Gcheck, x_0}^{\on{mer}})_{\on{Op}_{\Gcheck, x_0}^{\on{nilp-mon}}}
    $$
    generates the target.

    Because we have
    $$
    (\on{Op}_{\Gcheck, x_0}^{\on{nilp-mon}})_{\on{red}} = \bigsqcup_{\check{\lambda} + \check{\rho} \in \check{\lambda}^+} \on{Op}_{\Gcheck, x_0}^{\check{\lambda}-\on{nilp}},
    $$
    the objects 
    $$
    \mathcal{O}_{\on{Op}_{\Gcheck, x_0}^{\check{\lambda}-\on{nilp}}} \in \IndCoh(\on{Op}_{\Gcheck, x_0}^{\on{mer}})_{\on{Op}_{\Gcheck, x_0}^{\on{nilp-mon}}}
    $$
    compactly generate $\IndCoh(\on{Op}_{\Gcheck, x_0}^{\on{mer}})_{\on{Op}_{\Gcheck, x_0}^{\on{nilp-mon}}}$.

    We claim that the Verma modules
    $$
    \mathbb{M}^{\check{\lambda}} = \on{ind}_{I}^{(\hat{g}_{\on{crit}}, I)} k_{\check{\lambda}} \in \IKLx
    $$
    are sent to $\mathcal{O}_{\on{Op}_{\Gcheck, x_0}^{\check{\lambda}-\on{nilp}}}$ under the functor $\IKLx \to \IndCoh(\on{Op}_{\Gcheck, x_0}^{\on{mer}})_{\on{Op}_{\Gcheck, x_0}^{\on{nilp-mon}}}$.

    To see this, we will adapt the argument in \cite{FG} that shows $\on{FLE}$ sends the Weyl modules $\mathbb{V}^{\check{\lambda}}$ to $\mathcal{O}_{\on{Op}_{\Gcheck, x_0}^{\check{\lambda}-\on{reg}}}$:

    On one hand, we have the isomorphism
    $$
    \mathcal{O}_{\on{Op}_{\Gcheck, x_0}^{\check{\lambda}-\on{nilp}}} \simeq \on{End}(\mathbb{M}^{\check{\lambda}})
    $$
    and unwinding definitions yields
    $$
    \on{End}(\mathbb{M}^{\check{\lambda}}) = \on{Hom}(\on{ind}_{I}^{(\hat{g}_{\on{crit}}, I)} k_{\check{\lambda}}, \mathbb{M}^{\check{\lambda}}) \simeq \on{Hom}_{I}(k_{\check{\lambda}}, \mathbb{M}^{\check{\lambda}}) =: (\mathbb{M}^{\check{\lambda}})_{\check{\check{\lambda}}}^{I^0}.
    $$
    For any $M \in \gmod_{\on{crit}}$, there is a map
    $$
    (M)_{\check{\lambda}}^{I^0} \to M^{\hat{\mathfrak{n}}_{+}} \to H^{\frac{\infty}{2}}(\mathfrak{n}((t)), \mathfrak{n}[[t]], M \otimes \Psi_0),
    $$
    which is the content of Lemma 3 of \cite{FG}.
    Taking $M = \mathbb{M}^{\check{\lambda}}$ gives us a map
    $$
    \mathcal{O}_{\on{Op}_{\Gcheck, x_0}^{\check{\lambda}-\on{nilp}}} \to H^{\frac{\infty}{2}}(\mathfrak{n}((t)), \mathfrak{n}[[t]], \mathbb{M}^{\check{\lambda}} \otimes \Psi_0),
    $$
    where the latter identifies with the desired image of $\mathbb{M}^{\check{\lambda}}$. Thus, it suffices to show that this map is an isomorphism.

    Analogously to the $\check{\lambda}$-regular case, we have that the above map has the structure of a morphism of modules over the lie algebroid $N^*_{\on{Op}_{\Gcheck, x_0}^{\check{\lambda}-\on{nilp}}/\on{Op}_{\Gcheck, x_0}^{\on{mer}}}$, and that $\mathcal{O}_{\on{Op}_{\Gcheck, x_0}^{\check{\lambda}-\on{nilp}}}$ is irreducible as a $N^*_{\on{Op}_{\Gcheck, x_0}^{\check{\lambda}-\on{nilp}}/\on{Op}_{\Gcheck, x_0}^{\on{mer}}}$-module. Therefore, this map must be injective and it suffices to compute and compare the characters of both sides as in Section 5 of \cite{FG}.

    First, let us compute the character of $\mathcal{O}_{\on{Op}_{\Gcheck, x_0}^{\check{\lambda}-\on{nilp}}}$. Let 
    $$
    J = \{\alpha_i \in \Delta | \langle \alpha_i, \check{\lambda} + \check{\rho} \rangle = 0\}
    $$
    and denote by $\check{P}_J$ the parabolic subgroup of $\check{G}$ corresponding to $J$.
    Now note that $\on{Op}_{\Gcheck, x_0}^{\check{\lambda}-\on{nilp}}$ is the set of operators of the form
    $$
    \{ \partial_t + \Sigma_{\Delta} \ t^{\langle \alpha_i, \check{\lambda} \rangle} f_i + v(t) + \frac{w}{t} \ | \ v(t) \in \check{\mathfrak{b}}[[t]], w \in O\} 
    $$
    modulo the action of $\check{N}[[t]]$, where
    $$
    O = \{w \in \check{\mathfrak{b}} \ | \ w + \Sigma_{J}f_i \on{mod}\check{\mathfrak{n}}_J \in \mathcal{N}_{\check{\mathfrak{m}}_J}\}.
    $$
    However, this is the same as
    $$
    \{ \partial_t +\frac{1}{t}(\Sigma_{\Delta}f_i - (\check{\lambda} + \check{\rho})) + v(t) + \frac{w}{t} \ | \ v(t) \in (\check{\lambda} + \check{\rho})(t) \check{\mathfrak{b}}[[t]] (\check{\lambda} + \check{\rho})(t)^{-1}, w \in (\check{\lambda} + \check{\rho})(t) O (\check{\lambda} + \check{\rho})(t)^{-1} \}
    $$
    modulo the free action of $(\check{\lambda} + \check{\rho})(t) \check{N}[[t]] (\check{\lambda} + \check{\rho})(t)^{-1}$.

    Consider $O \cap \check{\mathfrak{m}}_J$, which carries a free and transitive action of the group $\check{N} \cap \check{M}_J$. There is a surjective homomorphism of groups
    $$
    \phi : (\check{\lambda} + \check{\rho})(t) \check{N}[[t]] (\check{\lambda} + \check{\rho})(t)^{-1} \to \check{N} \cap \check{M}_J
    $$
    which picks out the constant term, and there is a surjective morphism
    \begin{align}
        &\{ \partial_t +\frac{1}{t}(\Sigma_{\Delta}f_i - (\check{\lambda} + \check{\rho})) + v(t) + \frac{w}{t} \ | \ v(t) \in (\check{\lambda} + \check{\rho})(t) \check{\mathfrak{b}}[[t]] (\check{\lambda} + \check{\rho})(t)^{-1}, w \in (\check{\lambda} + \check{\rho})(t) O (\check{\lambda} + \check{\rho})(t)^{-1} \} \\
        &\overset{\pi}{\longrightarrow} O \cap \check{\mathfrak{m}}_J
    \end{align}
    which picks out the constant term of $w$. $\pi$ is equivariant with respect to $\phi$, so we have
    $$
    \on{Op}_{\Gcheck, x_0}^{\check{\lambda}-\on{nilp}} \simeq \pi^{-1}(0)/\on{ker}(\phi)
    $$
    and thus
    $$
    \on{char}(\mathcal{O}_{\on{Op}_{\Gcheck, x_0}^{\check{\lambda}-\on{nilp}}}) = \on{char}(\on{Fun}(\pi^{-1}(0)))/\on{char}(\on{Fun}(\on{ker}(\phi))).
    $$
    $\pi^{-1}(0)$ is
    $$
    \{ \partial_t +\frac{1}{t}(\Sigma_{\Delta}f_i - (\check{\lambda} + \check{\rho})) + v(t) + \frac{w}{t} \ | \ v(t) \in (\check{\lambda} + \check{\rho})(t) \check{\mathfrak{b}}[[t]] (\check{\lambda} + \check{\rho})(t)^{-1}, w \in (\check{\lambda} + \check{\rho})(t) \check{\mathfrak{n}}_J (\check{\lambda} + \check{\rho})(t)^{-1} \}
    $$
    so
    $$
    \on{char}(\on{Fun}(\pi^{-1}(0))) = \Pi_{n > 0} (1-q)^{-l} \cdot (\Pi_{\alpha \in \Phi_{J}^+} \Pi_{n \geq 0} (1 - q^{n+1})^{-1}) \cdot (\Pi_{\alpha \in \Phi^+ \setminus \Phi_{J}^+} \Pi_{n \geq 0} (1 - q^{n + \langle \alpha, \check{\lambda}+\check{\rho}\rangle})^{-1}).
    $$
    On the other hand,
    $$
    \on{ker}(\phi) = (\check{\lambda} + \check{\rho})(t) \check{N}[[t]] (\check{\lambda} + \check{\rho})(t)^{-1} \cap \on{exp}(t\check{\mathfrak{n}}[[t]])
    $$
    so 
    $$
    \on{char}(\on{Fun}(\on{ker}(\phi))) = (\Pi_{\alpha \in \Phi_{J}^+} \Pi_{n \geq 0} (1 - q^{n+1})^{-1}) \cdot (\Pi_{\alpha \in \Phi^+ \setminus \Phi_{J}^+} \Pi_{n \geq 0} (1 - q^{n + \langle \alpha, \check{\lambda}+\check{\rho}\rangle})^{-1}).
    $$
    Therefore,
    $$
    \on{char}(\mathcal{O}_{\on{Op}_{\Gcheck, x_0}^{\check{\lambda}-\on{nilp}}}) = \Pi_{n > 0} (1-q)^{-l}.
    $$

    Finally, to compute the character of $H^{\frac{\infty}{2}}(\mathfrak{n}((t)), \mathfrak{n}[[t]], \mathbb{M}^{\check{\lambda}} \otimes \Psi_0)$, we follow what is done in Section 5.2 of \cite{FG} essentially verbatim except with the finite Weyl module $V^{\lambda}$ replaced with the finite Verma module $M^{\lambda}$. Then, we find that 
    $$
    \on{char}(H^{\frac{\infty}{2}}(\mathfrak{n}((t)), \mathfrak{n}[[t]], \mathbb{M}^{\check{\lambda}} \otimes \Psi_0)) = \Pi_{n > 0} (1-q)^{-l}
    $$
    as desired.
\end{proof}

\begin{prop}
    $\IndCoh(\on{Op}_{\Gcheck, x_0}^{\on{mer}} \underset{\on{LS}_{\Gcheck, x_0}^{\on{mer}}}{\times} (\ncheck/\Bcheck)_{x_0})$ is Iwahori-Hecke tempered.
\end{prop}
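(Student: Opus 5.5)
We need to show that $\boldsymbol{M} = \IndCoh(\on{Op}_{\Gcheck, x_0}^{\on{mer}} \underset{\on{LS}_{\Gcheck, x_0}^{\on{mer}}}{\times} (\ncheck/\Bcheck)_{x_0})$ is Iwahori-Hecke tempered, i.e. that the unit/counit of the adjunction $\boldsymbol{M}^{\on{temp}} \rightleftharpoons \boldsymbol{M}$, induced from $\on{Aff}_{\Gcheck, x_0}^{\on{spec, temp}} \rightleftharpoons \on{Aff}_{\Gcheck, x_0}^{\on{spec}}$, are isomorphisms. The plan is to mimic the proof of spherical temperedness of $\IndCoh(\on{Op}^{\on{mon-free}}_{\Gcheck,x_0})$ in Section 7 of \cite{GLC2}. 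The key geometric input is that $\on{Op}_{\Gcheck, x_0}^{\on{mer}}$ is an ind-scheme whose map to $\on{LS}_{\Gcheck, x_0}^{\on{mer}}$ is formally smooth in the appropriate sense. Concretely, over the formal completion along the nilpotent-monodromy locus, it is a pro-smooth cover of $\gcheck/\Gcheck$. Because of this, the singular support of objects of $\boldsymbol{M}$, computed relative to $(\ncheck/\Bcheck)\times_{\gcheck/\Gcheck}(\ncheck/\Bcheck)$, is forced to be zero.

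First, I would rewrite $\boldsymbol{M}$ as a base change of an $\on{Aff}^{\on{spec}}_{\Gcheck,x_0}$-module defined purely on the finite-dimensional side:
$$
\boldsymbol{M} \simeq \IndCoh((\ncheck/\Bcheck)_{x_0}) \underset{\QCoh(\gcheck/\Gcheck)}{\otimes} \QCoh(\on{Op}_{\Gcheck, x_0}^{\on{mer}} \underset{\on{LS}_{\Gcheck, x_0}^{\on{mer}}}{\times} \gcheck/\Gcheck)
$$
Here the action of $\on{Aff}^{\on{spec}}_{\Gcheck,x_0} = \IndCoh(\ncheck/\Bcheck\times_{\gcheck/\Gcheck}\ncheck/\Bcheck)$ is by convolution on the first factor only. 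This uses two facts. First, $\ncheck/\Bcheck \to \on{LS}^{\on{mer}}_{\Gcheck,x_0}$ factors through $\gcheck/\Gcheck$ (recorded in Subsection \ref{B is a Pointwise Equivalence}). Second, $\on{Op}^{\on{mer}}_{\Gcheck,x_0}\times_{\on{LS}^{\on{mer}}_{\Gcheck,x_0}}\gcheck/\Gcheck$ is, after completion, a (pro-)smooth fibration over $\gcheck/\Gcheck$, so the $\IndCoh$ tensor product formula holds. With this rewriting, $\boldsymbol{M}^{\on{temp}}$ becomes
$$
\QCoh((\ncheck/\Bcheck)_{x_0}) \underset{\QCoh(\gcheck/\Gcheck)}{\otimes} \QCoh(\on{Op}_{\Gcheck, x_0}^{\on{mer}} \underset{\on{LS}_{\Gcheck, x_0}^{\on{mer}}}{\times} \gcheck/\Gcheck),
$$
since $\on{Aff}^{\on{spec,temp}}_{\Gcheck,x_0}\otimes_{\on{Aff}^{\on{spec}}_{\Gcheck,x_0}}\IndCoh(\ncheck/\Bcheck) \simeq \QCoh(\ncheck/\Bcheck)$. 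The latter holds because $\ncheck/\Bcheck$ is smooth, so $\IndCoh=\QCoh$ there, and this module is already tempered.

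Second, I would check that the comparison functor $\boldsymbol{M}^{\on{temp}}\to\boldsymbol{M}$ is then exactly $\Psi$/$\Xi$ for the fiber product. To do so I would use that $\ncheck/\Bcheck$ is smooth and the second factor is flat over $\gcheck/\Gcheck$. Together these imply that the fiber product $\on{Op}^{\on{mer}}_{\Gcheck,x_0}\times_{\on{LS}^{\on{mer}}_{\Gcheck,x_0}}(\ncheck/\Bcheck)_{x_0}$ is (ind-)smooth, or at least eventually coconnective with $\IndCoh$ generated by $\QCoh$ compacts. Hence $\QCoh \to \IndCoh$ is an equivalence on it. This is the analogue of the statement in \cite{GLC2} that $\on{Op}^{\on{mon-free}}$ is formally smooth, which is what gives spherical temperedness there.

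The main obstacle is the smoothness claim. I need that $\on{Op}^{\on{mer}}_{\Gcheck,x_0} \to \on{LS}^{\on{mer}}_{\Gcheck,x_0}$, restricted over the nilpotent-monodromy locus, is formally smooth (the regular-singular opers with nilpotent residue form a pro-smooth family over $\gcheck/\Gcheck$). Therefore its base change to the smooth stack $\ncheck/\Bcheck$ is formally smooth, and has no singular support. I would try to deduce this from the explicit description of $\on{Op}^{\check\lambda\text{-nilp}}_{\Gcheck,x_0}$ computed in the previous proposition, where residues $w$ sweep out orbits transversally. Alternatively, I would cite the analogous statement in Section 3 of \cite{GLC2} that $\on{Op}^{\on{mer}}\to\on{LS}^{\on{mer}}$ is formally smooth. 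If this fails integrally, a fallback is to test temperedness after applying the conservative functor $\QCoh(\ncheck/\Bcheck)\otimes_{\on{Aff}^{\on{spec}}_{\Gcheck,x_0}}-$. The previous proposition shows this functor lands fully faithfully in $\IndCoh(\on{Op}^{\on{mer}}_{\Gcheck,x_0})$, where one compares with $\QCoh$ via the Springer resolution being proper surjective.
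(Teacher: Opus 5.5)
Your first step has the right shape, and it is essentially what the paper does: exhibit $\boldsymbol{M}$ as a base change of the $\on{Aff}_{\Gcheck, x_0}^{\on{spec}}$-module $\QCoh(\ncheck/\Bcheck)_{x_0}$, on which the action already factors through $\on{Aff}_{\Gcheck, x_0}^{\on{spec, temp}}$. But you have put $\QCoh$ on the wrong factor. The paper first replaces $\on{Op}_{\Gcheck, x_0}^{\on{mer}}$ by its completion along the nilpotent-monodromy locus. It then uses passability of $(\on{LS}_{\Gcheck, x_0}^{\on{mer}})_{\on{nilp-mon}}^{\wedge}$ to get
\[
\boldsymbol{M} \simeq \IndCoh((\on{Op}_{\Gcheck, x_0}^{\on{mer}})_{\on{nilp-mon}}^{\wedge}) \underset{\QCoh((\on{LS}_{\Gcheck, x_0}^{\on{mer}})_{\on{nilp-mon}}^{\wedge})}{\otimes} \QCoh(\ncheck/\Bcheck)_{x_0},
\]
with $\IndCoh$ on the infinite-type oper factor and $\on{Aff}_{\Gcheck, x_0}^{\on{spec}}$ acting only on the last factor. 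Temperedness is then formal: $\on{Aff}_{\Gcheck, x_0}^{\on{spec, temp}} \otimes_{\on{Aff}_{\Gcheck, x_0}^{\on{spec}}} -$ passes through the outer tensor product, and it does nothing to $\QCoh(\ncheck/\Bcheck)_{x_0}$, whose action already factors through the idempotent algebra $\on{Aff}_{\Gcheck, x_0}^{\on{spec, temp}}$. In your display the oper factor carries $\QCoh$. Since $\IndCoh = \QCoh$ on the smooth $\ncheck/\Bcheck$, your formula amounts to asserting $\IndCoh \simeq \QCoh$ for the fiber product itself. That is where your second step and your ``main obstacle'' come from. Taken literally, your displays for $\boldsymbol{M}$ and $\boldsymbol{M}^{\on{temp}}$ already coincide, so if the first step were right the second would be superfluous.

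So the proposal as written rests on a claim that is both unproved and unnecessary. The claim is that $\on{Op}_{\Gcheck, x_0}^{\on{mer}} \to \on{LS}_{\Gcheck, x_0}^{\on{mer}}$ is formally smooth over the nilpotent-monodromy locus, and that this forces $\QCoh \to \IndCoh$ to be an equivalence on an infinite-type ind-stack. You give no argument for that second implication, and the paper uses neither part. More importantly, it is the wrong target. Iwahori--Hecke temperedness is a property of the $\on{Aff}_{\Gcheck, x_0}^{\on{spec}}$-action (singular support in the $\ncheck/\Bcheck \times_{\gcheck/\Gcheck} \ncheck/\Bcheck$ direction), not a question of whether $\boldsymbol{M}$ is $\QCoh$ of something. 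Likewise, the tensor product formula holds because the base is passable, not because the fibration over it is smooth; the paper notes the same passability for the spherical counterpart. Finally, your fallback is circular. The functor $\QCoh(\ncheck/\Bcheck)_{x_0} \otimes_{\on{Aff}_{\Gcheck, x_0}^{\on{spec}}} -$ is conservative only on tempered modules. Because $\QCoh(\ncheck/\Bcheck)_{x_0}$ is itself tempered, this functor sends $\boldsymbol{M}^{\on{temp}} \to \boldsymbol{M}$ to an equivalence for \emph{every} $\boldsymbol{M}$, so it cannot detect temperedness.
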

\begin{proof}
For tautological reasons, there is an isomorphism
    $$
    \on{Op}_{\Gcheck, x_0}^{\on{mer}} \underset{\on{LS}_{\Gcheck, x_0}^{\on{mer}}}{\times} (\ncheck/\Bcheck)_{x_0} \simeq (\on{Op}_{\Gcheck, x_0}^{\on{mer}})_{\on{nilp-mon}}^{\wedge} \underset{(\on{LS}_{\Gcheck, x_0}^{\on{mer}})_{\on{nilp-mon}}^{\wedge}}{\times} (\ncheck/\Bcheck)_{x_0}.
    $$
    Because $(\on{LS}_{\Gcheck, x_0}^{\on{mer}})_{\on{nilp-mon}}^{\wedge}$ is passable like its spherical counterpart $(\on{LS}_{\Gcheck, x}^{\on{mer}})_{\on{mon-free}}^{\wedge}$,
    \begin{align}
    &\IndCoh((\on{Op}_{\Gcheck, x_0}^{\on{mer}})_{\on{nilp-mon}}^{\wedge} \underset{(\on{LS}_{\Gcheck, x_0}^{\on{mer}})_{\on{nilp-mon}}^{\wedge}}{\times} (\ncheck/\Bcheck)_{x_0}) \\ &\simeq \IndCoh((\on{Op}_{\Gcheck, x_0}^{\on{mer}})_{\on{nilp-mon}}^{\wedge}) \underset{\QCoh((\on{LS}_{\Gcheck, x_0}^{\on{mer}})_{\on{nilp-mon}}^{\wedge})}{\otimes} \QCoh(\ncheck/\Bcheck)_{x_0}
    \end{align}
    as $\on{Aff}_{\Gcheck, x_0}^{\on{spec}}$-module categories. However, the action of $\on{Aff}_{\Gcheck, x_0}^{\on{spec}}$ on the right hand side comes from the action of $\on{Aff}_{\Gcheck, x_0}^{\on{spec}}$ on $\QCoh(\ncheck/\Bcheck)_{x_0}$, which factors through $\on{Aff}_{\Gcheck, x_0}^{\on{spec, temp}}$.
\end{proof}

Finally, we will show that $\IKLx$ is Iwahori-Hecke tempered.

\begin{rem}
    Here the argument diverges significantly from the proof of the analogous result, Proposition 7.2.6 of \cite{GLC2} given in Section 7.4. This is because the action of $\on{Aff}_{\Gcheck, x_0}^{\on{spec}}$ on $\IKLx$ is not t-exact. We will instead use the following weaker property.
\end{rem}

\begin{lem}
    If
    $$
    \mathcal{F} \in \on{Aff}_{\Gcheck, x_0}^{\on{spec}, \leq -\infty}
    $$
    and 
    $$
    c \in \IKLx^c \subset \IKLx^b,
    $$
    then
    $$
    \mathcal{F} \star c \in \IKLx^{\leq -\infty}.
    $$
\end{lem}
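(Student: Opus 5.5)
The plan is to reduce the statement to a boundedness property of the action that was already established in the construction of $\on{IFLE}$. In Subsection \ref{Construction of IFLE} we observed that, for a fixed bounded object $c \in \IKLx^b$, the functor
$$
\on{Aff}_{\Gcheck, x_0}^{\on{spec}} \longrightarrow \IKLx, \qquad \mathcal{F} \longmapsto \mathcal{F} \star c,
$$
obtained through $\on{B}$ from the action of $\on{Aff}_{G, x_0}$, is bounded on the right. Concretely, there is an integer $N = N(c)$ with $\mathcal{F} \star c \in \IKLx^{\leq n + N}$ whenever $\mathcal{F} \in (\on{Aff}_{\Gcheck, x_0}^{\on{spec}})^{\leq n}$. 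Granting this, the lemma follows formally. If $\mathcal{F}$ is infinitely connective, then $\mathcal{F} \in (\on{Aff}_{\Gcheck, x_0}^{\on{spec}})^{\leq n}$ for every $n$, so $\mathcal{F} \star c \in \IKLx^{\leq n+N}$ for every $n$, and hence $\mathcal{F} \star c \in \IKLx^{\leq -\infty}$.

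The real content is therefore to justify the right-boundedness with a uniform shift $N(c)$. I would argue on the geometric side through $\on{B}_{x_0}$, which Theorem \ref{Theorem B} makes an equivalence.

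First I reduce to the generators $c = \mathbb{M}^{\check{\lambda}}$. Any compact $c$ is a finite colimit of shifts and retracts of Verma modules, so a uniform $N$ for each Verma module gives one for $c$.

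Next I need to know that $\on{B}_{x_0}$, which is built from $\on{B}^{\on{classical}}$ through the identifications of Subsection \ref{B is a Pointwise Equivalence}, is right t-exact up to a finite shift. The key input is that, on the spectral side, $\on{Aff}_{\Gcheck, x_0}^{\on{spec}} = \IndCoh(\ncheck/\Bcheck \times_{\gcheck/\Gcheck} \ncheck/\Bcheck)$ has connective part generated under colimits by $\on{Coh}^{\leq 0}$. Each such generator corresponds under $\on{B}^{\on{classical}}$ to a compact object of $\Dmod(\on{Fl}_G)^{(I,\on{ren})}$ of bounded amplitude. Using the Bezrukavnikov description, these are convolutions of Wakimoto objects $J_\lambda$ with objects supported on finitely many Iwahori orbits.

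With that in hand, convolution $\mathcal{G} \star \mathbb{M}^{\check{\lambda}}$ with $\mathcal{G}$ supported on a finite-dimensional closed union of orbits has cohomological amplitude bounded in terms of the dimension. Convolution with $J_\lambda$ sends Verma modules to Wakimoto-type modules concentrated in bounded degrees.

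Finally, right-boundedness is preserved under colimits, since $\IKLx^{\leq m}$ is closed under colimits. This lets me pass from generators to all of $(\on{Aff}_{\Gcheck, x_0}^{\on{spec}})^{\leq n}$.

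The main obstacle is the uniformity of the shift: the amplitude must be bounded independently of which coherent generator of $\on{Coh}^{\leq 0}$ is used, even though the orbit supports are unbounded. My first attempt would be to use the $\lambda$-twist compatibility from the proof that $\on{IFLE}$ preserves compacts. That compatibility says twisting by $\mathcal{O}(\lambda)$ corresponds to $J_\lambda$, so the question reduces to checking right t-exactness up to a fixed shift of convolution with $\on{Coh}^{\leq 0}$ pulled back from $\gcheck/\Gcheck$ and with $J_\lambda \star -$ on $\IKLx$. The latter should be right t-exact up to a shift independent of $\lambda$, because Wakimoto modules are concentrated in a single degree. If this fails, the fallback is to deduce the lemma from the bound established in the construction of $\on{IFLE}$, for the specific infinitely connective $\mathcal{F}$ only.
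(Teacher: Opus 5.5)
Your formal step is correct: a bound $\mathcal{F}\star c\in\IKLx^{\leq n+N(c)}$ for $\mathcal{F}$ in the spectral $\leq n$ part would imply the lemma. Your reduction to Verma modules is also the one the paper uses. But that uniform bound is the entire content of the lemma, and you do not prove it. The right-boundedness stated in Subsection \ref{Construction of IFLE} is asserted there without proof and is stronger than the lemma, so falling back on it is circular. Your own attempt leaves the key point open:
\begin{itemize}
\item Amplitude bounds depending on the dimension of the support are not uniform, as you note yourself.
\item The claim that $\on{B}_{x_0}$ is right t-exact up to a fixed shift for the spectral t-structure is never justified. Bezrukavnikov's equivalence is not t-exact, so this would need a genuine comparison of t-structures.
\item The $\lambda$-twist reduction does not work as stated. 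Knowing that $J_\lambda\star\mathbb{M}^{\check{\mu}}$ sits in a single degree says nothing about right t-exactness of $J_\lambda\star-$ on the other objects you would have to convolve.
\item You have not shown how $\on{Coh}^{\leq 0}$ of $\ncheck/\Bcheck\times_{\gcheck/\Gcheck}\ncheck/\Bcheck$ is built from twists of pullbacks from $\gcheck/\Gcheck$ by connectivity-preserving operations.
\end{itemize}

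The idea you are missing is to avoid the spectral t-structure altogether and use a uniform statement on the geometric side. The paper reduces to $c=\mathbb{M}^{\check{\lambda}}$ and writes $\mathcal{F}\star\mathbb{M}^{\check{\lambda}}=\Gamma^{\check{\lambda}}(\mathcal{F})$. Here $\Gamma^{\check{\lambda}}$ is a twisted version of the Frenkel--Gaitsgory localization functor $\Gamma:\Dmod_{\on{crit}}(\on{Fl}_G)\to\IKLx$. Its bounded cohomological amplitude for the geometric t-structure holds for all D-modules on $\on{Fl}_G$, which is exactly the uniformity you could not obtain.

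What makes this applicable, implicitly in the paper, is that $\on{B}$ matches infinitely connective objects. By Subsection \ref{B is a Pointwise Equivalence}, $\Psi\circ\on{B}^{\on{classical}}\simeq\iota^!$. The kernel of $\Psi$ is the infinitely connective part of $\IndCoh$, and the kernel of $\iota^!$ is the infinitely connective part of $\Dmod(\on{Fl}_G)^{(I,\on{ren})}$. So a spectrally infinitely connective $\mathcal{F}$ is geometrically infinitely connective. Right-boundedness of $\Gamma^{\check{\lambda}}$ then finishes the proof, and no uniform shift between the two t-structures is ever needed.
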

\begin{proof}
    Since $\IKLx$ is generated by verma modules, we may take $c = \mathbb{M}^{\check{\lambda}}$.
    
    The functor
    $$
    \Gamma : \Dmod_{\on{crit}}(\on{Fl}_G) \longrightarrow \IKLx
    $$
    was studied in \cite{FG09}. For an object $\mathcal{F} \in \on{Aff}_{G}$, 
    $$
    \Gamma(\mathcal{F}) = \mathcal{F} \star \mathbb{M}^0.
    $$
    Using similar methods, one can study a twisted version
    $$
    \Gamma^{\check{\lambda}} : \Dmod_{\on{crit}, \check{\lambda}}(\on{Fl}_G) \longrightarrow \IKLx
    $$
    which sends
    $$
    \Gamma^{\check{\lambda}}(\mathcal{F}) = \mathcal{F} \star \mathbb{M}^{\check{\lambda}}.
    $$
    Now the claim follows from the bounded cohomological amplitude of each $\Gamma^{\check{\lambda}}$.
\end{proof}

\begin{prop}
    $\IKLx$ is Iwahori-Hecke tempered.
\end{prop}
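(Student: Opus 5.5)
To show that $\IKLx$ is Iwahori-Hecke tempered, I will imitate the structure of the argument for spherical temperedness in \cite{GLC2}, but with the t-exactness input replaced by the preceding Lemma. The starting point is a description of the adjunction
$$
\on{Aff}_{\Gcheck, x_0}^{\on{spec, temp}} \rightleftharpoons \on{Aff}_{\Gcheck, x_0}^{\on{spec}}
$$
as a Verdier localization. The left adjoint is the fully faithful embedding $\Xi: \QCoh \hookrightarrow \IndCoh$ of the Steinberg-type stack $\ncheck/\Bcheck \underset{\gcheck/\Gcheck}{\times} \ncheck/\Bcheck$. Its right adjoint $\Psi$ has kernel equal to $\on{Aff}_{\Gcheck, x_0}^{\on{spec}, \leq -\infty}$, the infinitely connective objects. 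Because $\Psi$ is monoidal and $\Xi\Psi$ is idempotent, the unit $\boldsymbol{1}$ sits in a fiber sequence
$$
\mathcal{K} \to \boldsymbol{1} \to \Xi\Psi(\boldsymbol{1}),
$$
where $\Xi\Psi(\boldsymbol{1})$ is an idempotent algebra, $\mathcal{K}$ is an idempotent coalgebra, and $\Psi(\mathcal{K}) = 0$. In particular $\mathcal{K} \in \on{Aff}_{\Gcheck, x_0}^{\on{spec}, \leq -\infty}$. For any module category $\boldsymbol{M}$, the subcategory $\boldsymbol{M}^{\on{temp}} \subset \boldsymbol{M}$ is then the essential image of $\Xi\Psi(\boldsymbol{1}) \star -$. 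So temperedness of $\boldsymbol{M}$ is equivalent to $\mathcal{K} \star m = 0$ for all $m \in \boldsymbol{M}$. Since $\mathcal{K}\star -$ is continuous, it suffices to check this on the compact generators, namely the Verma modules $\mathbb{M}^{\check{\lambda}}$.

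Next I apply the preceding Lemma with $\mathcal{F} = \mathcal{K}$ and $c = \mathbb{M}^{\check{\lambda}}$. This gives $\mathcal{K} \star \mathbb{M}^{\check{\lambda}} \in \IKLx^{\leq -\infty}$. It remains to show that this object is zero. I will argue that $\IKLx$ is left-separated, i.e. $\IKLx^{\leq -\infty} = 0$. The category is compactly generated by the $\mathbb{M}^{\check{\lambda}}$, and these are bounded: each is induced from a finite-dimensional $I$-representation. Since $\on{ind}$ is left adjoint to the forgetful functor, which is t-exact, $\on{Hom}(\mathbb{M}^{\check{\lambda}}, -)$ has bounded amplitude. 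An infinitely connective object therefore receives no nonzero maps from any compact generator, and so it vanishes. This step needs the t-structure on $\IKLx$ to be the one compatible with the generation by Vermas, as in the critical-level conventions of \cite{GLC2}.

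I expect the main obstacle to be the first step: identifying the kernel of $\Psi$ with exactly the infinitely connective part of $\on{Aff}_{\Gcheck, x_0}^{\on{spec}}$, in a way compatible with the convolution monoidal structure, so that $\mathcal{K}$ genuinely lies in $\on{Aff}_{\Gcheck, x_0}^{\on{spec}, \leq -\infty}$. On the geometric side this matches the description used for $\on{B}^{\on{classical}}$, where the kernel of $\iota^!$ consists of the infinitely connective objects. I will transport that description through $\on{B}_{x_0}$, which was shown in Section \ref{B is a Pointwise Equivalence} to identify with $\on{B}^{\on{classical}}$ compatibly with $\Psi$. Alternatively, one can argue directly that $\ncheck/\Bcheck \underset{\gcheck/\Gcheck}{\times} \ncheck/\Bcheck$ is quasi-smooth, and use the standard fact that $\ker(\Psi: \IndCoh \to \QCoh)$ on a quasi-smooth stack is $\IndCoh^{\leq -\infty}$.
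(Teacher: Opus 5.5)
Your reduction agrees with the paper's. Temperedness amounts to $\mathcal{K}\star c=0$ for compact $c$, where $\mathcal{K}$ is the cone of the counit. Since the embedding $\Xi$ is the \emph{left} adjoint, the counit is $\Xi\Psi(\boldsymbol{1})\to\boldsymbol{1}$ and $\mathcal{K}$ is its cofiber, not its fiber; this slip is harmless. The preceding Lemma then gives $\mathcal{K}\star\mathbb{M}^{\check\lambda}\in\IKLx^{\leq-\infty}$.

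\textbf{The gap is your last step}, the claim $\IKLx^{\leq-\infty}=0$. The justification is incorrect. $\on{Hom}(\mathbb{M}^{\check\lambda},-)\simeq\on{Hom}_{\on{Rep}(I)}(k_{\check\lambda},\on{oblv}(-))$ is derived $I$-invariants, i.e.\ continuous cohomology of the infinite-dimensional pro-nilpotent Lie algebra $\on{Lie}(I^+)$, and this has unbounded cohomological amplitude. t-exactness of $\on{oblv}$ only makes $\on{Hom}(\mathbb{M}^{\check\lambda},-)$ left t-exact. Compare the spherical case, where Frenkel--Gaitsgory show that the self-Ext algebra of a critical-level Weyl module is an algebra of differential forms on an infinite-dimensional space of opers, nonzero in every degree. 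With compact generators of this kind you should not expect left-separatedness at all. If $\eta_1,\eta_2,\dots$ are degree-one self-Ext classes of $\mathbb{M}^{\check\lambda}$ whose iterated products never vanish, then $\on{colim}(\mathbb{M}^{\check\lambda}\xrightarrow{\eta_1}\mathbb{M}^{\check\lambda}[1]\xrightarrow{\eta_2}\mathbb{M}^{\check\lambda}[2]\to\cdots)$ lies in $\IKLx^{\leq -n}$ for every $n$. Yet it receives a nonzero map from the compact object $\mathbb{M}^{\check\lambda}$. So knowing only $\mathcal{K}\star\mathbb{M}^{\check\lambda}\in\IKLx^{\leq-\infty}$, you cannot conclude.

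\textbf{What is missing} is a reason for $\mathcal{K}\star c$ to be \emph{bounded}, and the paper gets this from the rest of the argument.
\begin{itemize}
\item The top arrow of the comparison diagram is an equivalence, $\boldsymbol{M}^{\on{temp}}$ is recovered from $\QCoh(\ncheck/\Bcheck)_{x_0}\otimes_{\on{Aff}_{\Gcheck,x_0}^{\on{spec}}}\boldsymbol{M}$, and the spectral side is tempered. Together these make $\on{IFLE}^{\on{temp}}$ an equivalence.
\item Since $\on{IFLE}$ preserves compacts, so does $\on{temp}$.
\item The fully faithful left adjoint $\IKLx^{\on{temp}}\hookrightarrow\IKLx$ preserves compacts, because its right adjoint is continuous.
\item Hence for compact $c$, the counit $\on{temp}(c)\to c$ (with $\on{temp}(c)$ viewed in $\IKLx$) is a map between compact objects. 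Its cone $\mathcal{K}\star c$ is therefore compact, hence bounded.
\item Being also infinitely connective by the Lemma, $\mathcal{K}\star c$ is zero.
\end{itemize}
This uses only the triviality that a bounded, infinitely connective object vanishes, not left-separatedness. The paper phrases it as: $\on{temp}$ preserves compacts, so it suffices to check conservativity on compacts.

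The step you flagged as the main obstacle is not one. The identity $\ker(\Psi)=\on{Aff}_{\Gcheck,x_0}^{\on{spec},\leq-\infty}$ is a statement about objects, standard for the quasi-smooth Steinberg stack, and the paper uses it without comment. No compatibility with convolution is needed.
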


\begin{proof}
    We want to show that
    $$
    \on{temp} : \IKLx \longrightarrow \IKLx^{\on{temp}}
    $$
    is an equivalence. Since it is a colocalization, it suffices to show that it is conservative.
    Note that the functor $\on{IFLE}^{\on{temp}}$
    $$
    \IKLx^{\on{temp}} \longrightarrow \IndCoh(\on{Op}_{\Gcheck, x_0}^{\on{mer}} \underset{\on{LS}_{\Gcheck, x_0}^{\on{mer}}}{\times} (\ncheck/\Bcheck)_{x_0})^{\on{temp}} \simeq \IndCoh(\on{Op}_{\Gcheck, x_0}^{\on{mer}} \underset{\on{LS}_{\Gcheck, x_0}^{\on{mer}}}{\times} (\ncheck/\Bcheck)_{x_0})
    $$
    is an equivalence. Thus, since $\on{IFLE}$ preserves compacts, so does $\on{temp}$. It follows that it suffices to show that $\on{temp}$ is conservative on compact objects. However, all compacts are bounded in $\IKLx$, and using the fact that
    $$
    \on{cone}(\on{temp}(\boldsymbol{1}_{\on{Aff}_{\Gcheck, x_0}^{\on{spec}}}) \to \boldsymbol{1}_{\on{Aff}_{\Gcheck, x_0}^{\on{spec}}}) \in \on{Aff}_{\Gcheck, x_0}^{\on{spec}, \leq -\infty}
    $$
    we can conclude that the cone of the counit map
    $$
    \on{cone}((\on{temp}(c) \to c) \simeq \on{cone}(\on{temp}(\boldsymbol{1}_{\on{Aff}_{\Gcheck, x_0}^{\on{spec}}}) \to \boldsymbol{1}_{\on{Aff}_{\Gcheck, x_0}^{\on{spec}}}) \star c \in \IKLx^{\leq - \infty}.
    $$
    Therefore, if $\on{temp}(c) = 0$, then $c \in \IKLx^{\leq - \infty}$ which implies $c = 0$ since c is also bounded. This shows that $\on{temp}$ is conservative on compact objects, as desired.
\end{proof}

%%%%%%%%%%%%%%%%%%%%%%%%%%%%%%%%%%%%%%%%%%%%%%%%%%%%%%%%%%%%%%%%%%%%%%%%%%

\end{document}